\documentclass[11pt]{article}
\usepackage{ifthen}
\usepackage{algorithm}
\usepackage{algpseudocode}
\newboolean{arxiv}
\setboolean{arxiv}{true}

\usepackage{lipsum}

\usepackage{ifthen}

\ifthenelse{\boolean{arxiv}}{
\usepackage{arxiv_def}
}
{
\usepackage{times}
\usepackage{colt_def}
}

\usepackage[toc,page]{appendix}
\usepackage{xr}
\usepackage{mathtools}
\usepackage{yfonts}   
\usepackage{subcaption}

\DeclareSymbolFont{Greekletters}{OT1}{iwona}{m}{n}
\DeclareSymbolFont{greekletters}{OML}{iwona}{m}{it}
\DeclareMathSymbol{\salpha}{\mathord}{greekletters}{"0B}
\DeclareMathSymbol{\sbeta}{\mathord}{greekletters}{"0C}
\DeclareMathSymbol{\sgamma}{\mathord}{greekletters}{"0D}
\DeclareMathSymbol{\sOmega}{\mathord}{Greekletters}{"0A}
\DeclareMathSymbol{\svarepsilon}{\mathord}{greekletters}{"22}
\DeclareMathSymbol{\svarrho}{\mathord}{greekletters}{"25}
\DeclareMathSymbol{\svarphi}{\mathord}{greekletters}{"27}

\makeatletter
\newcommand{\vast}{\bBigg@{3}}
\newcommand{\Vast}{\bBigg@{4}}
\makeatother

\begin{document}

\title{Topological trivialization in\\ non-convex empirical risk minimization}

\author{
Andrea Montanari\thanks{Department of Statistics and Department of Mathematics, Stanford University} 
	\and 
	Basil Saeed\thanks{Department of Electrical Engineering, Stanford University}
}

\maketitle

\begin{abstract}
Given data $\{(\bx_i,y_i): i\le n\}$,  with 
$\bx_i$ standard $d$-dimensional Gaussian feature vectors, and $y_i\in\reals$
response variables, we study the general problem of
learning a  model parametrized by $\btheta\in\R^d$, by minimizing a
loss function that depends on $\btheta$ via the one-dimensional projections $\btheta^\sT\bx_i$. While previous work mostly dealt with convex losses, our
approach assumes general (non-convex) losses hence covering classical, yet poorly understood examples such  as the perceptron and non-convex robust regression.

We use the Kac-Rice formula to control the asymptotics of the expected number of 
local minima of the empirical risk, under 
the proportional asymptotics $n,d\to\infty$, $n/d\to\alpha >1$. 
Specifically, we prove a finite dimensional variational formula 
for the exponential growth rate of the expected number of local minima. 
Further we provide sufficient conditions under which the exponential 
growth rate vanishes and all empirical risk minimizers
have the same asymptotic properties (in fact, we expect the minimizer 
to be unique in these circumstances).
We refer to this phenomenon as `rate trivialization.'

If the population risk has a unique minimizer, our sufficient condition for rate trivialization is typically 
verified when  
the samples/parameters ratio $\alpha$ is larger than
a suitable constant $\alpha_{\star}$. Previous general results of this type 
required $n\ge Cd \log d$.

We illustrate our results in the case of non-convex robust regression.
Based on heuristic arguments and numerical simulations, we present
a conjecture for the exact location of the trivialization phase transition
$\alpha_{\str}$.
\end{abstract}

\section{Introduction}
Given $n$  feature vector-label pairs $(\bx_i,y_i)\in\R^{d}\times \R$, $i\in[n]$,
we learn a model $\bx\mapsto \varphi(\btheta^{\sT}\bx)$  by 
Empirical Risk Minimization (ERM) as follows:
\begin{equation}
\label{eq:ERM_0}
\hbtheta = \arg\min_{\btheta \in\R^d} \hat R_n(\btheta),\quad\quad
     \hR_n(\btheta) := \frac1n \sum_{i=1}^n L(\varphi(\btheta^\sT\bx_i),y_i) + \frac{\lambda}{2}\|\btheta\|_2^2.
\end{equation}
Here,  $L: \R^{2} \to \R$ is a loss
function, $\lambda\ge 0$ is the (ridge) regularization parameter,
and the activation function $\varphi:\R\to\R$ is assumed to be given.
We will be interested in the structure of the empirical risk landscape
under the high-dimensional asymptotics $n,d\to\infty$, $n/d\to\alpha$

Over the last decade, the study of ERM in the high-dimensional  regime $n\asymp d$
has provided insights into a wealth of phenomena: phase transitions
\cite{DMM09,BayatiMontanariLASSO,lelarge2019fundamental}; 
information-computation gaps
\cite{celentano2022fundamental,schramm2022computational};  
benign overfitting and double-descent \cite{hastie2022surprises},
to name a few.
Most of this theoretical work has been restricted to the convex case.
While convex or linear problems provide good approximations to non-convex settings in certain regimes, they remain rough analogies, and fail to capture several important aspects of truly non-convex models.

One important general insight regarding the empirical risk (train loss) $\hR_n(\btheta)$
is that, for large enough $n/d$ it should concentrate around the expected risk (test loss) $R(\btheta)=\E \hR_n(\btheta)$, uniformly over $\|\btheta\|_2\le D_{\max}$~\cite{van1998asymptotic}. 
This observation can be generalized to the concentration of the gradient 
and Hessian processes $\nabla\hR_n(\btheta)$, $\nabla^2\hR_n(\btheta)$.
This allows showing that certain topological properties of the empirical risk landscape
match the ones of the expected risk for large enough $n/d$ \cite{mei2018landscape}.
For instance (under technical conditions) 
\cite{mei2018landscape} proves that if the test loss 
$R(\btheta)$ has isolated strongly convex local minima, then 
the train loss $\hR_n(\btheta)$  has isolated strongly convex local minima in a neighborhood of those of $R(\btheta)$, provided $n/d\ge C\log d$.
However, the uniform convergence approach fails
to provide a sharp characterization of local minima in
the proportional asymptotics $n\asymp d$. In particular, it provides  only upper bounds on the generalization
and estimation error and not precise asymptotics.

In this work, we build on the Kac-Rice approach recently developed 
in~\cite{asgari2025local} to obtain sharp characterizations of
a class of non-convex ERM  under the proportional asymptotics $n/d\to\alpha$.

Generalizing its usage in spin glass theory  \cite{fyodorov2012critical}, we will use the term 
`topological trivialization' to cases in which (with high probability)
the first order stationary points of $\hR_n$ are in one-to-one correspondence with 
the ones of $R$, with matching index.
%

In the present paper we present the following results:
\begin{enumerate}
\item[{\sf R1.}] A variational formula for the exponential growth rate of the expected number of
local minima of the empirical risk $\hR_n$.
\item[{\sf R2.}] Sufficient conditions for this exponential growth rate to become non-positive.
This enables to show that, for $n/d$ above a constant, a weak form of topological
trivialization (which we call `rate trivialization') holds. Whenever the above conditions
hold, we obtain a sharp asymptotic characterization of the local minima.
\item[{\sf R3.}] We specialize these results to non-convex M-estimation and 
as a case study we consider robust regression with Tukey loss. 
We demonstrate how our asymptotic characterization can be accurately evaluated,
and are predictive of numerical experiments.
\end{enumerate}

Well established approaches for analyzing ERM under proportional asymptotics
are ill-suited to address the above questions in non-convex settings.
A particularly elegant technique is based on Gordon min-max comparison inequality~\cite{thrampoulidis2014gaussian,thrampoulidis2018precise,miolane2021distribution}. 
When the composition $t\mapsto L(\varphi(t),y)$ is non-convex, 
this approach only guarantees a lower bound on the minimum train loss,
and this lower bound is generally not asymptotically tight.

An alternative approach to analyzing high-dimensional ERM problems is the use of Approximate Message Passing (AMP) algorithms
\cite{BayatiMontanariLASSO,donoho2016high}.
Here, one designs an iterative algorithm $\hat\btheta_n^\up{t}$ to minimize $\hat R_n(\btheta)$, and derives an
asymptotically exact characterization the sequence of iterates 
---known as `state evolution'--- for $t\in\{0,1,\dots, T\}$, with $T=O(1)$ as $n,d\to\infty$. 
Then one shows that the $t \to\infty$ of state evolution describes the $n,d\to\infty$
asymptotics of the minimizers of $\hat R_n(\btheta)$. 
In absence of convexity, designing an AMP algorithm that provably converges to every 
minimizer of interest can be  challenging.

A technique that combines Gaussian comparison inequalities and AMP analysis
was successful in characterizing the global minimum of the empirical risk for
non-convex matrix factorization problems \cite{montanari2015non} and, recently, 
supervised learning problems of the type studied in the present paper \cite{vilucchio2025asymptotics}.

As mentioned above, in this paper we overcome limitations of earlier approaches by adopting the Kac-Rice method of~\cite{asgari2025local}. While this approach presents its own set of technical challenges,
it has the important advantage of giving access to a quite detailed picture of the empirical
risk landscape. In particular it gives access to a quite complete picture of the
empirical risk landscape. When this characterization is sufficiently benign, this in particular
implies convergence of a variety of standard optimization algorithms.

In the rest of the introduction, we  review the
Kac-Rice approach and describe more precisely the main contributions of this paper. 
We refer to \cite{fan2021tap,celentano2023local}.

\paragraph{Localizing ERM minimizers via Kac-Rice.} We assume data $\{(\bx_i,y_i):i\le n\}$ to be i.i.d.
with $\bx_i\sim\normal(\bzero,\bSigma)$ and $y_i$ depending on $\bx_i$ only via the one-dimensional projection $\btheta_0^{\sT}\bx_i$.

Let $\cuP(\R^{m})$ denote the set of probability distributions on $m$-dimensional vectors.
For a given $\btheta\in\R^{d}$, define the empirical distributions
\begin{equation}
    \hmu(\btheta):= \frac1d\sum_{j=1}^d \delta_{\sqrt{d} 
    (\bSigma^{1/2}[\btheta,\btheta_0])_j} \in\cuP(\R^{2})
    \quad\quad
    \hnu(\btheta):= \frac1n\sum_{i=1}^n \delta_{\btheta^\sT\bx_i, \btheta_0^\sT\bx_i, w_i} \in\cuP(\R^{3}).
\end{equation}
Here,
$(\bSigma^{1/2}[\btheta,\btheta_0])_j$ denotes the $j$th row of the $d\times 2$ dimensional matrix $\bSigma^{1/2}[\btheta,\btheta_0].$
For some sufficiently regular subsets of probability distributions $\cuA\subseteq \cuP(\R^{2}),\cuB\subseteq\cuP(\R^{2})$, 
let
\begin{align}
\nonumber
\cZ_n(\cuA, \cuB):=\Big\{\mbox{ local minimizers of }  \hR_n(\btheta)
\mbox{ s.t.  }  (\hmu(\btheta),\hnu(\btheta)) \in\cuA \times \cuB \Big\}\, 
\end{align}
and assume there exists a lower semicontinuous $\Phi:\cuP(\R^{2})\times \cuP(\R^3)\to\R$ such that
\begin{align}
\label{eq:markov_localization}
\P\Big(  \cZ_n(\cuA, \cuB)>0
\Big)
&\le
\exp\left\{-n\inf_{(\mu,\nu)\in(\cuA\times \cuB)}
\Phi(\mu,\nu)+o(n)\right\}.
\end{align}
We call such a $\Phi$ a \emph{valid lower rate function} and, 
following \cite{asgari2025local}, we say that the 
\emph{rate trivialization property} holds if
there exists a valid lower rate function and $(\mu_\star,\nu_\star)$ such that:
\begin{equation}\label{eq:Trivialization}
\begin{split}
    &\Phi(\mu,\nu)\ge 0 \;\;\;\forall \mu,\nu\, ,\\
    &\Phi(\mu,\nu) = 0 \;\;\Leftrightarrow  (\mu,\nu) = (\mu_\star,\nu_\star).
    \end{split}
\end{equation}
Informally, local minimizers $\hbtheta$ are exponentially unlikely to have empirical distribution 
$\hmu(\hbtheta)$, $\hnu(\hbtheta)$ in regions $\cuA\times\cuB$ that are 
separated from $(\mu_*,\nu_*)$.
In~\cite{asgari2025local} (see also \cite{maillard2020landscape}) 
a valid lower rate function is computed  by bounding
$\E\left[ |\cZ_n(\cuA, \cuB)|\right]$:
\begin{align}
    \lim_{n,d\to\infty}\frac{1}{n}\log \E\left[ |\cZ_n(\cuA, \cuB)|\right]
    \le -\inf_{(\mu,\nu)\in\cuA \times\cuB}\Phi(\mu,\nu)\, .\label{eq:Summary}
\end{align}
Such a $\Phi$ is valid by Markov's inequality.

Under trivialization, taking first $\cuA\times \cuB = \Ball^c_{W_2}(\mu_\star,\eps)\times\cuP(\R^2)$ and then $\cuA\times \cuB = \cuP(\R^2)\times 
\Ball^c_{W_2}(\nu_\star,\eps)$ in Eq.~\eqref{eq:markov_localization}
(with $\Ball^c_{W_2}(\gamma_\star,\eps)$ the complement of 
the $\eps$ balls in the $W_2$ topology around $\gamma_\star$)
allows us to conclude that, for any local minimizer $\hat\btheta$,
\begin{align}
\nonumber
\hmu(\hat\btheta)\stackrel{W_2}{\Rightarrow}\mu_\star\,,\;\;\;\;\;\;
\hnu(\hat\btheta)\stackrel{W_2}{\Rightarrow}\nu_\star\, .
\end{align}
The measures $\mu_\star,\nu_\star$ can then be used to give a description of the asymptotics of $\hat \btheta,$ such as the estimation error or training error at $\hat\btheta$. 

 \begin{figure}[h!] 
        \centering 
        \begin{subfigure}{0.48\textwidth} 
            \centering
            \includegraphics[width=\textwidth]{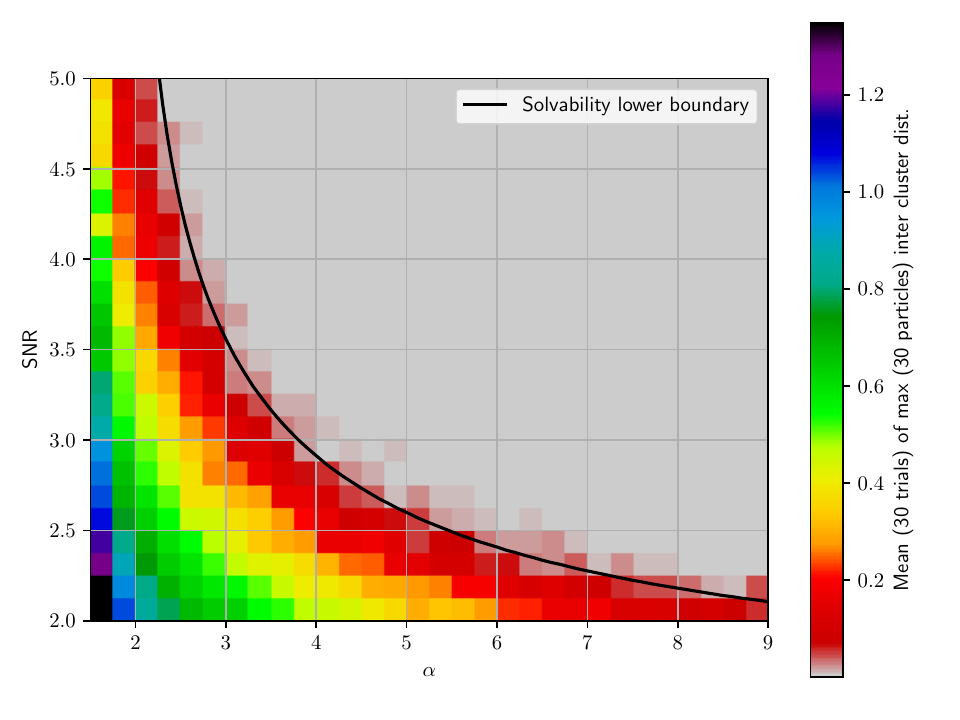} 
            \caption{}
            \label{fig:PhaseDiagram_1}
        \end{subfigure}%
        \hfill 
        \begin{subfigure}{0.48\textwidth} 
            \centering
            \includegraphics[width=\textwidth]{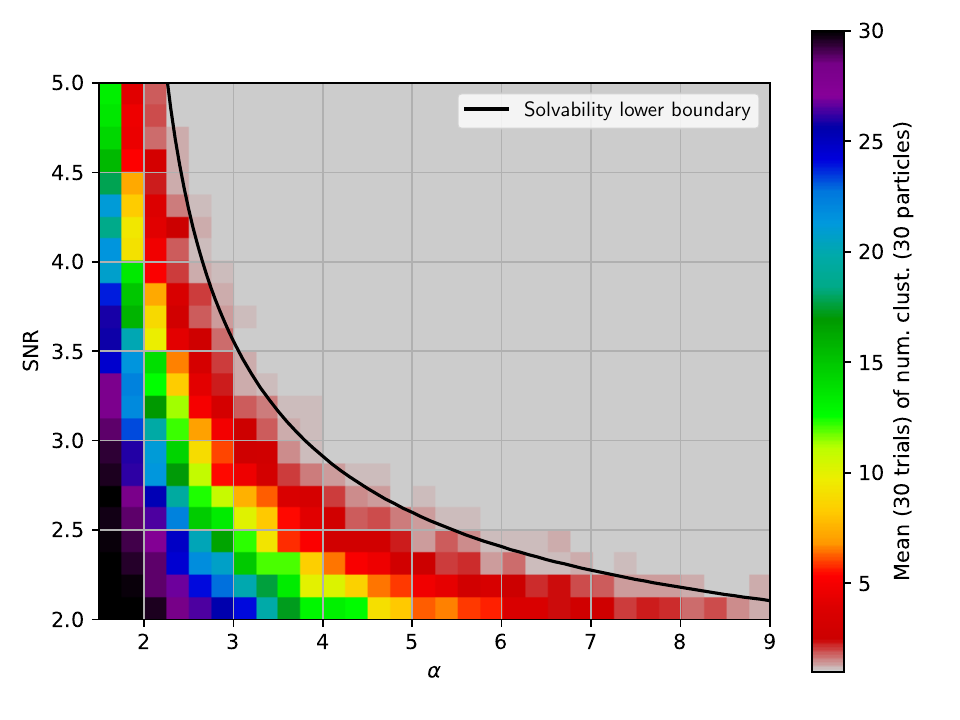} %
            \caption{}
            \label{fig:PhaseDiagram_2}
        \end{subfigure}
        \caption{ 
            Tukey regression:
        Output of gradient descent from $M=30$ random 
        initializations, on the same empirical risk landscape (same data 
        $\{y_i,\bx_i\}_{i\le n}$), to produce
         estimates  $\hat\btheta^\up{1},\dots,\hat\btheta^\up{30}$.
        Here $d=200$ and results are averaged over $N=30$ different 
         Left:
        Maximum distance $\max_{i,j}\|\hat\btheta^\up{i}-\hat\btheta^\up{j}\|_2$ 
        as a function of $\alpha$ and the $\SNR.$ 
        Right: Number of clusters  formed by the point estimates  
        $\hat\btheta^\up{1},\dots,\hat\btheta^\up{30}$
        (clusters are constructed by thresholding the normalized distance
        $\|\hat\btheta^\up{i} - \hat\btheta^\up{j}\|/(\|\hat\btheta^\up{i}\|_2 \|\hat\btheta^\up{j}\|_2)^{1/2}$
        at $\eps = 10^{-3}$).
        }
        \label{fig:PhaseDiagram}
    \end{figure}

\paragraph{Main results}
The results of this paper are as follows.
\begin{enumerate}
    \item[{\sf R1.}] For the functional $\Phi(\mu,\nu)$ of \cite{asgari2025local},
    describing the exponential growth rate of the expected number of local minima,
     we prove 
    a simple characterization of the infinite-dimensional variational formula 
    over $(\mu,\nu)$ appearing in Eq.~\eqref{eq:Summary}. 
     When $\cuA,\cuB$ are defined in terms of linear constraints on $\mu,\nu$, we show that this formula reduces to a finite-dimensional one involving only a fixed number of parameters
     (Theorem~\ref{thm:fin_dim_var_formula_k=1}).
    \item[{\sf R2.}] By analyzing this simplified characterization, we give conditions on under which the trivialization property of Eq.~\eqref{eq:Trivialization} holds.
    Namely, we give an explicit formula for a constant $\alpha_{\star}$
    (depending on the loss and target distribution)
    such that rate trivialization takes place
    for over-sampling ratio above this threshold $\alpha >
    \alpha_\star$. We also derive the analytic form of $\mu_\star,\nu_\star$ at which trivialization occurs (Theorem~\ref{thm:trivialization_k=1}). These analytic formulas  directly generalize the ones found in the literature on high-dimensional asymptotics of convex ERM problems.
    \item[{\sf R3.}] We apply our results to study non-convex M-estimation (Theorem~\ref{cor:robust_regression}) and  in particular, 
    robust regression with Tukey loss.
    We specialize our analytic formula to this setting, demonstrating that
    it can be accurately evaluated.
    We further compare predictions with numerical simulations, showing excellent agreement 
    for moderately large $n,d$ and $\alpha=n/d$ of order one.

    In fact, we demonstrate numerically that our predictions are accurate for a 
    larger range $\alpha\in (\alpha_{\str},\infty)$ than predicted by the theory.
    Using heuristic arguments, we derive a precise conjecture for the 
    trivialization threshold $\alpha_{\str}$
    (Section~\ref{sec:tukey}).
\end{enumerate}

As a preview, Fig.~\ref{fig:PhaseDiagram} displays numerical results for 
Tukey regression. We demonstrate the impact of the trivialization
phase transition at $\alpha_{\str}$ on the gradient descent (GD) 
dynamics. Namely, we run GD from a random initialization on the unit sphere,
repeating the experiment $30$ times for each realization of the data $\bX,\by$, for a total of $30$ realizations.
We thus obtain $30$ GD outputs $\hbtheta^{(1)}$, \dots $\hbtheta^{(30)}$ for 
each empirical risk landscape.
We report the average over the landscape realization of two geometric properties of this high-dimensional point cloud
in the plane $(\alpha,\SNR)$, with $\SNR := \|\btheta_0\|_2/\Var(w)$ the signal-to-noise ratio.
On the \emph{left}, we  report the maximum distance between any two outputs 
$\max_{i,j}\|\hat\btheta^\up{i}-\hat\btheta^\up{j}\|_2$: this allows to probe whether GD converges to a unique minimizer
regardless of the initialization. On the \emph{right}, we plot the number of clusters obtained by 
clustering the GD outputs $\hbtheta^{(1)}$, \dots $\hbtheta^{(30)}$. We refer to Section \ref{sec:Numerical}
and Appendix \ref{sec:numerical_details} for further details about these numerical experiments.

Results are consistent with a sharp change at the trivialization
threshold $\alpha_{\str}$. For $\alpha>\alpha_{\str}$, GD converges to the same local minimum with high probability with respect to the initialization; for $\alpha<\alpha_{\str}$, different random runs result in very different outputs.

\section{Main results}
As mentioned in the introduction, we assume that, for every $i\in[n]$, and every 
Borel set $\cB\subseteq \R$,
    $\P\left(y_i \in \cB | \bx_i \right) = \mathsf{P}(\cB| \btheta_0^\sT\bx_i).$
Equivalently, for some random vector $\bw\in\R^n$ independent of $(\bx_i)_{i\le n}$, and $h:\R^2 \to\R$, we can write 
\begin{equation}\label{eq:xi_yi}
    y_i = h(\btheta_0^\sT\bx_i, w_i).
\end{equation}
Mathematically, we can then rewrite the empirical risk minimization problem in~\eqref{eq:ERM_0} as
\begin{equation}
\label{eq:erm_obj}
    \hat R_n(\btheta) = \frac1n \sum_{i=1}^n \ell(\btheta^\sT \bx_i; \btheta_0{^\sT} \bx_i, w_i) + \lambda \|\btheta\|_2^2,
\end{equation}
where we introduced $\ell : \R^{3} \to \R$. Even though $\ell$ is obtained by the composition of $L$ and $\varphi$ (cf.~\eqref{eq:ERM_0}), we will often refer to $\ell$ as the `loss.'
Of course the expression \eqref{eq:erm_obj} is only relevant for theoretical analysis, and
for actual estimation we minimize the risk \eqref{eq:ERM_0}.

For the rest of the paper, we will continue with this formulation and present our assumptions and results using this notation. Throughout, we will denote by $\ell'(v;v_0,w)$, $\ell''(v;v_0,w)$ the derivatives of $\ell$ with respect to its
first argument.
\subsection{Assumptions}
\label{sec:assumptions}
\begin{assumption}[Regime]
\label{ass:regime}
We assume the proportional asymptotics, i.e., $d := d_n$ such that
   \begin{equation}
       \alpha_n := \frac{n}{d_n} \to \alpha \in (0, \infty).
   \end{equation}
\end{assumption}

\begin{assumption}[Loss]
\label{ass:loss}
\label{ass:density}
The following partial derivatives exist and are Lipschitz continuous:
\begin{equation}
\nonumber
    \frac{\partial}{\partial u_1}\ell(u_1; u_0,w),\quad\quad
    \frac{\partial^2}{\partial {u_1} \partial{u_i}} \ell(u_1;u_0,w),\quad\quad
    \frac{\partial^3}{\partial u_1 \partial{u_i}  \partial u_j}\ell(u_1;u_0,w), \quad\quad\textrm{for}\quad\quad i,j\in\{0,1\}.
\end{equation}

In particular, this implies that
\begin{equation}
   \sfL_{\min} :=   -\Big( 0 \wedge \inf_{\substack{ (v,v_0) \in \R\times\R \\w \in\R}}  \ell''(v;v_0,w)  \Big) < \infty
\quad\quad
   \sfL_{\max} :=   \sup_{\substack{ (v,v_0) \in \R\times\R \\w \in\R}}  
   \ell''(v;v_0,w) < \infty\, .
\end{equation}
\end{assumption}

\begin{assumption}[Data distribution]\label{ass:Data}
The covariates $(\bx_i)_{i\in [n]}$ are i.i.d. with $\bx_i\sim\cN(\bzero,\bSigma)$ for $\bSigma$ with condition number $O(1)$.
If $\lambda \neq 0$, we will assume $\bSigma = \bI_d$.
The noise vector $\bw = (w_i)_{i\in[n]}$ is independent of $(\bx_i)_{i\in [n]}$ and satisfies,
almost surely,
\begin{equation}
   \frac1n \sum_{i=1}^n \delta_{w_i} \stackrel{W_2}{\Rightarrow} \P_w
\end{equation}
for some $\P_w \in\cuP(\R)$  with finite second moment.
\end{assumption}

\begin{assumption}[Empirical distribution of the entries of $\btheta_0$]
\label{ass:theta_0}
There exists a constant $c>0$ independent of $n$ such that 
$\|\bSigma^{1/2}\btheta_0\|_2 > c $.
   Furthermore, for some $\mu_0 \in\cuP(\R)$, we have the convergence 
   \begin{equation}
\nonumber
       \frac1d \sum_{j=1}^d \delta_{(\bSigma^{1/2}\btheta_{0})_j}\stackrel{W_2}{\Rightarrow} \mu_0,
       \quad\quad\textrm{with}\quad\quad r_{00} := \int \!t_0^2\,\mu_0(\de t_0).
   \end{equation}
\end{assumption}

\subsection{Definitions}
\label{sec:definitions}

\paragraph{Random matrix preliminaries.}
Let $\upper:=\{z\in\complex:\;\Im(z)>0\}$ be the upper half complex plane.
For $z\in\C$ with $\Im(z) >0$,  $\nu\in\cuP({\R^{3}})$,
we let $s_{\star} \equiv s_{\star}(\nu;z)$ be the the unique solution 
in $\upper$ of the equation
\begin{equation}
\label{eq:fp_eq}
 \E_\nu\left[\frac{
\ell''(v;v_0,w)
     }{1 + \ell''(v;v_0,w) s} 
     \right] - z 
= \frac{1}{\alpha s},
\end{equation}
Since we assume that $|\ell''(v;v_0,w)|$ is bounded, the
existence and uniqueness of $s_{\star}(\nu;z)$ is a classical result in random matrix theory~\cite[Theorem 1.1 and Lemma 5.1]{silverstein1995empirical}.
Further, $s_{\star}(\nu;z)$ is the Stieltjes transform on a unique measure on $\reals$ which we denote by $\mu_{\star}(\nu)\in\cuP(\R)$,
also known as a generalized Marchenko-Pastur (MP) law.

\paragraph{Functionals of $\btheta$.}
Given a probability measure $\mu\in\cuP(\R^{2})$, we define
\begin{equation}
   \bR(\mu) := 
   \begin{bmatrix}
      r_{11}  & r_{10}\\
      r_{01}  & r_{00}\\
   \end{bmatrix}
   :=
   \int \bt\bt^{\sT} \mu(\de\bt), \quad\quad\textrm{so that}\quad\quad
\bR(\hmu(\btheta)) :=\begin{bmatrix}
\btheta^\sT\bSigma\btheta & \btheta^{\sT}\bSigma\btheta_0\\
\btheta_0^{\sT}\bSigma\btheta & \btheta_0^{\sT}\bSigma\btheta_0\\
\end{bmatrix}.
\end{equation}

Further, for $\bv,\bv_0,\bw\in \R^{n}$, define 
\begin{equation}
\label{eq:def_G}
\bg(\bv,\bv_0,\btheta;\bw) := \frac1n [\bv,\bv_0]^\sT \ell'(\bv;\bv_0,\bw) + \lambda [\btheta,\btheta_0]^\sT\btheta \in \R^{2},
\end{equation}
with $\ell':\R^{3}\to\R$ applied row-wise to $[\bv,\bv_0,w]\in\R^{n\times 3}$.
In particular, notice that $\bg(\bX\btheta, \bX\btheta_0,\btheta;\bw) = 
\bzero$
encodes the equations 
\begin{equation}
\btheta^\sT\grad_{\btheta}\hat R_n(\btheta) =0,\quad\quad
\btheta_{0}^\sT\grad_{\btheta}\hat R_n(\btheta) =0.
\end{equation}

\paragraph{Sets of local minimizers.}
We will consider local minimizers of the ERM problem satisfying a set of constraints. 
Namely, for $\cuA\subseteq \cuP(\R^{2}),\cuB\subseteq\cuP(\R^{3})$, 
we define 
\begin{align}
\label{eq:set_of_zeros_main}
\cZ_n&(\cuA,\cuB):=
\Big\{\btheta\in \R^{d}:\; 
(\hmu(\btheta),\hnu(\btheta))\in \cuA\times \cuB,\;
\nabla \hR_n(\btheta)=\bzero,\; 
\nabla^2 \hR_n(\btheta)\succeq\bzero
\Big\}.
\end{align}
We will further define the set of ``well-behaved'' points as follows:
\begin{align}
\label{eq:well_behaved}
\cG_n(\sfA_{R},\sfa_L) :=
\big\{ \btheta\in\R^d\, : 
\sfA_R^2 \succ \bR(\hmu(\btheta)),\;
&\|\ell'(\bX\btheta;\bX\btheta_0,\bw)\|^2_2 > \sfa_L^2n,\;\\
&\quad\quad\sigma_{\min}\left( \bJ \bg(\bX\btheta,\bX\btheta_0,\btheta) \right) > \, e^{-\overline{o}(n)}
\big\}.\nonumber
\end{align}
where $\overline{o}(n)$ is any fixed function such that $\lim_{n\to\infty}\overline{o}(n)/n = 0$
(but we do not track the dependence on $\overline{o}(\,\cdot\,)$),
$\bJ \bg := \bJ_{(\bv,\bv_0,\btheta)}\bg\in \R^{2\times 2n+d}$ denotes the Jacobian of the map defined in Eq.~\eqref{eq:def_G}, and $\ell'$ is once again applied row-wise.
Note that both $\cZ_n$ and $\cG_n$ are functions of $\bX, \btheta_0$ and $\bw$,
    but we suppress this in the notation.

\paragraph{Notations.}
We use $\Ball^{d}(\bx_0,r)$ for the Euclidean ball of radius $r$ in $\R^d$ centered at
$\bx_0$. We denote by $\sfS_{\succeq\bzero}^{k}$ and $\sfS_{\succ\bzero}^{k}$ 
the sets of $k\times k$ symmetric positive semidefinite and (respectively)
strictly positive definite matrices. Given a probability measure
$\mu$ over variables $x,y,z,\dots$, we denote by $\mu_{\cdot|x}$ the 
conditional distribution given $x$, and by $\mu_{(x)}$ the 
marginal distribution of $x$. Given two probability measures $\mu_1,\mu_2$
with finite second moment, we denote by $W_2$ their Wasserstein-$2$ distance.

\subsection{The finite-dimensional variational formula}
Throughout, when considering probability measures $\mu \in\cuP(\R\times \R)$ and $\nu\in\cuP(\R \times \R \times \R),$
we use $(t,t_0) \in\R \times \R$ and $(v,v_0,w)\in\R\times \R\times \R$
to denote random variables with these distributions, respectively.

The following functionals and sets will play a crucial role in our theorems to be stated below. 
For $\bQ=(q_{ij})_{1\le ij\le 2}\in\reals^{2\times 2}$ we denote by $\schur(\bQ,q_{22})=q_{11}-q_{12}q_{22}^{-1}q_{21}$ the Sch\"ur complement with respect to the designated entry $q_{22}$. 
Let
\begin{align}
\label{eq:phi_A_phi_B_defs}
&\Phi_\mupart(\mu) :=
\frac1\alpha 
\KL\left(\mu_{\cdot|t_0}\| \cN(r_{10}(\mu) r_{00}^{-1} t_0,\schur(\bR,r_{00})\right)\, ,\\
    &\Phi_{\nupart}(\nu,\bR,s)  :=   -\lambda s - \E[\log (1+ s\ell''(v;v_0,w)]   
    +  \frac1{2\alpha}\log \left(
    \frac{
    \alpha\, e \,s^2\, \E_\nu[\ell'(v;v_0,w)^2]}{
\schur(\bR,r_{00})
    }\right) \nonumber
    +\KL(\nu_{\cdot|w} \| \cN(\bzero ,\bR ))\, .
\end{align}
Further, let
\begin{align}
&\cuV_{\mupart} := \{\mu \in\cuP(\R^{2})
\;\; \mu_{(t_0)} = \mu_0
\}\, ,\label{eq:cuV_mupart_def}\\
& \cuV_\nupart(\bR) 
:= \Big\{ \nu \in\cuP(\R^{3}) :
\E_\nu[(v,v_0)\ell'(v;v_0,w)] +\lambda (r_{11},r_{10})=\bzero,\;
\nu_{(w)} = \P_w,\;
\mu_{\star}(\nu)((-\infty,-\lambda)) = 0 
\Big\}.\label{eq:cuV_nupart_def}
\end{align}
\begin{theorem}
\label{thm:fin_dim_var_formula_k=1}
\label{thm:fin_dim_var_formula}
For $\delta>0$, define the event
\begin{equation}
    \Omega_\delta := 
    \big\{\bw \in\Ball_{\sfA_w\sqrt{n}}^n (\bzero) ,\; W_2(\hnu_\bw, \P_w) < \delta
    \big\}\, ,
\end{equation}
Let Assumptions \ref{ass:regime} to \ref{ass:theta_0} of Section~\ref{sec:assumptions} hold.
Then for any $\sfA_R,\sfa_L>0$, the following hold.
\begin{enumerate}
    \item (Minimax variational principle.)
    \label{item:thm_1_item_1}
With
   \begin{equation}
   \label{eq:Z_n_def}
       Z_n := Z_n(\cuA,\cuB;\sfA_R,\sfa_L) := |\cZ_n(\cuA,\cuB) \cap \cG_n(\sfA_R,\sfa_L)|,
   \end{equation}
  there exists a high-probability $\Omega_0'$ such that
    for any 
    $\cuA,\cuB$ open in the $W_2$ topology, 
  we have 
\begin{align}
 \lim_{\delta\to0}\lim_{n\to\infty}\frac{1}{n}\log\E\left[Z_n \one_{\Omega_\delta\cap \Omega_0'}\right]
 &\le
 -\hspace{-5mm}
\inf_{\substack{\mu \in \cuA\cap\cuV_{\mupart}\cap\cuG_\mupart 
} }
\inf_{
\nu \in\cuB \cap \cuV_\nupart(\bR(\mu))\cap \cuG_\nupart
}
\Phi_\star(\mu,\nu)
\end{align}
where 
\begin{equation}\label{eq:PhiStarThm}
\Phi_\star(\mu,\nu) :=    
\Phi_\mupart(\mu) +
 \sup_{s \in \cS_0(\nu)} \Phi_{\nupart}(\nu,\bR(\mu),s)
\end{equation}
with 
%
\begin{align}
\label{eq:S-Set-Def}
\cS_0(\nu) &:= \bigg\{ s \in\R :  s \in \left(0, L_{0}(\nu)^{-1}\right),\; \E_{\nu}\bigg[ \bigg(\frac{s \ell''(v; v_0,w)}{ 1 + s \ell''(v;v_0, w)}\bigg)^2\bigg] < \frac1\alpha\bigg\},\\
L_0(\nu) &:= -\Big( 0 \wedge \inf_{(v,v_0,w) \in \supp(\nu)} \ell''(v;v_0,w)\Big)
\\
   \cuG_\mupart  &:= \cuG_\mupart(\sfA_R)
   := 
\left\{\mu: \sfA_R^2 \succ \bR(\mu) \right\}\\
   \cuG_\nupart  
   &:= 
   \cuG_\nupart(\sfa_L)  :=
\left\{\nu : 
\E_{\nu}[\ell'(v,v_0,w)^2 ] > \sfa_L^2,
\right\}.
\label{eq:cuG_nupart_def}
\end{align}

\item (Finite dimensional variational principle.)
    \label{item:thm_1_item_2}
    Assume $\sfL_{\min} > 0.$
Fix open subsets $\cR \subseteq \sfS_{\succeq 0}^{2},$
$\cL \subseteq \R $, along with
a function $\psi :\R^{3}\to\R$ that is  locally Lipschitz and at most quadratic growth.
Let
\begin{align}
&\cuA_\cR := \cuA_\cR(\sfA_R):=\Big\{\mu \in\cuP(\R^2)  : \bR(\mu) \in\cR,
\sfA_R^2\succ \bR(\mu)
\Big\}\, ,\\
&\cuB_{\cL} := 
\cuB_{\cL}(\sfa_L)
:= \Big\{\nu \in \cuP(\R^3) : \E_{\nu}[\psi(v,v_0,w)] \in \cL,\;
    \E_\nu[\ell'(v,v_0,w)^2] > \sfa_L^2
    \Big\}\, .
\end{align}
Then, there exists a high-probability event $\Omega_0'$, such that 
\begin{equation}
\label{eq:computation_lb}
 \lim_{\delta\to0}\lim_{n\to\infty}\frac{1}{n}\log\E\left[|\cZ_n(\cuA_\cR,\cuB_\cL)|\one_{\Omega_\delta \cap\Omega_0'}\right]
 \le 
-\inf_{\substack{\bR \in\cR\\ r_{00}=\int t_0^2 \de\mu_0 }} \inf_{L \in\cL} 
\Phi_{\fin}(\bR,L)\, .
\end{equation}
where
\begin{align}
\nonumber
\Phi_{\fin}(\bR,L) :=
    \inf_{\substack{s\in(0,\infty)\\ g > \sfa_L^2}}
    \sup_{
    \substack{
    \gamma,\xi,\eta\in\R\\
    \bbeta \in \R^{2},\; \zeta \ge 0
    }
    }
    &
\bigg\{
\frac1{\alpha} \log(s)
+ \frac{(\xi-\zeta)}{\alpha}
-\lambda s
+ \gamma g
-\lambda  \bbeta^\sT(r_{11},r_{10})\\
    &
+ \eta\, L
    +  \frac1{2\alpha}\log\left(
    \frac{\alpha\, e\, g}{\schur(\bR,r_{00})}
    \right) 
    -\E_w[\log X(s,\bR; \gamma,\xi,\eta,\bbeta,\zeta)]
    \bigg\}\, ,\label{eq:PhiLb_First}
\end{align}
and
\begin{align}
  X(s,\bR; \gamma,\xi,\eta,\bbeta,\zeta) &:= 
    \int_{\{(v,v_0) : 1 + s \ell''(v,v_0,w) >0\}} \exp\big\{
 \gamma\ell'(v;v_0,w)^2
 + \bbeta^\sT(v,v_0)\ell'(v;v_0,w)
 + \eta\,\psi(v,v_0,w)\big\}\nonumber\\
 &\hspace{10mm}\cdot
  \exp\bigg\{\xi \frac{s \ell''}{1 + s\ell''}  
-
\zeta
\bigg(\frac{s \ell''}{1 + s\ell''}\bigg)^2
\bigg\}
    (1 + s \ell'')
    \gamma_{\bR} (\de v,\de v_0)\, .
\end{align}
\end{enumerate}
\end{theorem}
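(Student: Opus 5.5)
The plan has two stages. Part~\ref{item:thm_1_item_1} will be imported essentially from the Kac--Rice analysis of~\cite{asgari2025local}, with the supremum over $s$ made explicit. Part~\ref{item:thm_1_item_2} will then follow from Part~\ref{item:thm_1_item_1} by a constrained entropy minimization that uses convex (Lagrangian) duality.

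\textbf{Part~\ref{item:thm_1_item_1}.}
\begin{enumerate}
\item Apply the Kac--Rice formula to the map $\btheta\mapsto\nabla\hR_n(\btheta)$ restricted to $\cG_n(\sfA_R,\sfa_L)$. The lower bound $\sigma_{\min}(\bJ\bg)>e^{-\overline{o}(n)}$ justifies the change of variables to $(\bv,\bv_0,\btheta)=(\bX\btheta,\bX\btheta_0,\btheta)$.
\item Condition on $(\bX\btheta,\bX\btheta_0)$ and decompose $\bX$ into the part acting on $\mathrm{span}(\btheta,\btheta_0)$ and an independent Gaussian complement. Then:
\begin{itemize}
\item the gradient density at zero produces the factor $\big(\alpha e s^2\E_\nu[\ell'^2]/\schur(\bR,r_{00})\big)^{1/(2\alpha)}$ after a Gaussian computation;
\item the constraint $\bg=\bzero$ produces $\cuV_\nupart(\bR)$.
\end{itemize}
\item Treat the Hessian determinant $\E|\det\nabla^2\hR_n|\one_{\nabla^2\hR_n\succeq 0}$ as a determinant of a generalized Marchenko--Pastur matrix. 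Its log-asymptotics equal $\int\log(x+\lambda)\,\mu_\star(\nu)(\de x)$, which is finite only when $\mu_\star(\nu)((-\infty,-\lambda))=0$.
\item Use the variational characterization of this log-determinant through~\eqref{eq:fp_eq}. It gives an upper bound as $\sup_{s\in\cS_0(\nu)}$ of the $s$-dependent terms $-\lambda s-\E\log(1+s\ell'')+\frac1\alpha\log s$.
\item Obtain the KL terms $\Phi_\mupart$ and $\KL(\nu_{\cdot|w}\|\cN(\bzero,\bR))$ from Sanov-type large deviations of the empirical measures $\hmu,\hnu$. Openness of $\cuA,\cuB$ and the truncation $\Omega_\delta$ give the stated upper bound.
\end{enumerate}
I expect this step to reduce to results stated in~\cite{asgari2025local}, so I would mostly verify that the constants match.

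\textbf{Part~\ref{item:thm_1_item_2}.}
\begin{enumerate}
\item Start from Part~\ref{item:thm_1_item_1} with $\cuA=\cuA_\cR$ and $\cuB=\cuB_\cL$.
\item Minimize $\Phi_\mupart(\mu)$ over $\mu$ with prescribed $\mu_{(t_0)}=\mu_0$ and $\bR(\mu)=\bR$. The conditional Gaussian $\cN(r_{10}r_{00}^{-1}t_0,\schur(\bR,r_{00}))$ achieves zero, so only the $\nu$-problem remains.
\item Fix $\bR$, $L$ and $g=\E_\nu[\ell'^2]$, and bound $\inf_\nu\sup_s\le\inf_s\inf_\nu$. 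This minimax inequality goes in the right direction, because the claim is only an upper bound on the growth rate, i.e.\ a lower bound on the infimum.
\item For fixed $s$, minimize over $\nu$ the quantity $\KL(\nu_{\cdot|w}\|\gamma_\bR)-\E_\nu\log(1+s\ell'')$ subject to the following linear constraints:
\begin{itemize}
\item the stationarity equations, with multiplier $\bbeta$;
\item $\E\ell'^2=g$, with multiplier $\gamma$;
\item $\E\psi=L$, with multiplier $\eta$;
\item the equality defining $\mu_\star(\nu)$ via $\xi$ and the inequality $\E[(s\ell''/(1+s\ell''))^2]<1/\alpha$, handled by $\xi,\zeta\ge0$, respectively.
\end{itemize}
\item By Gibbs' variational principle, for fixed $w$ the infimum over $\nu_{\cdot|w}$ of KL minus a linear functional is $-\log X$. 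Averaging over $\nu_{(w)}=\P_w$ gives $-\E_w\log X$. Weak duality (inf over $\nu$ $\ge$ sup over multipliers) is all that is needed, again because we only need a lower bound.
\item The support restriction to $\{1+s\ell''>0\}$ comes from $s<L_0(\nu)^{-1}$. The assumption $\sfL_{\min}>0$ is used to keep $s$ in a compact range so that limits can be exchanged.
\end{enumerate}

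\textbf{Main obstacle.} The delicate step is the Hessian part: encoding membership $s\in\cS_0(\nu)$ and the condition $\mu_\star(\nu)((-\infty,-\lambda))=0$ as linear-in-$\nu$ constraints, so that they can be dualized. I would try writing $s$ as the solution of~\eqref{eq:fp_eq} at $z=-\lambda$, which is linear in $\nu$ once $s$ is fixed; this produces the $\xi$ term. The stability condition would be handled as a relaxed inequality, which produces $\zeta$.
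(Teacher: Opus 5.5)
Your overall architecture is the paper's. Part~1 is Theorem~1 of \cite{asgari2025local} (Theorem~\ref{thm:general}) plus a variational bound on the log-potential term. Part~2 minimizes out $\mu$, fixes $s=s_\star(-\lambda;\nu)$, rewrites the conditions on $s$ as linear constraints on $\nu$, and applies weak duality with Gibbs' principle.

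The gap is your Step~4 of Part~1, which you expect to import from \cite{asgari2025local}. It is not there. That paper's log-potential principle (Lemma~7, recalled here as Lemma~\ref{lemma:log_pot_convex}) assumes $L_0(\nu)=0$ and gives $\int\log(\zeta+\lambda)\,\mu_\star(\nu)(\de\zeta)\le\inf_{s>0}K_\ST(s;-\lambda,\nu)$. Extending it to $L_0(\nu)>0$ is precisely the new content of Part~1, and it is not a formality:
\begin{itemize}
\item $\partial_sK_\ST(s;z)=-\alpha(G_\ST(s)+z)$, and $s^2G_\ST'(s)=-1/\alpha+\E_\nu[(s\ell''/(1+s\ell''))^2]$ is increasing, so $G_\ST$ can decrease and then increase on $(0,L_0^{-1}]$.
\item Then $G_\ST(s)=\lambda$ can have a second root, past which $K_\ST$ decreases again. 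It goes to $-\infty$ as $s\uparrow L_0^{-1}$ if $(\ell'')_{\#}\nu$ has an atom at $-L_0$.
\item Hence $\int\log(\zeta+\lambda)\,\mu_\star(\nu)(\de\zeta)\le K_\ST(s;-\lambda,\nu)$ is false for general $s<L_0^{-1}$.
\end{itemize}
The stability inequality in $\cS_0(\nu)$ is exactly what restores the bound, and your sketch never says where that inequality comes from.

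What is missing is the content of Lemmas~\ref{lemma:G_to_s_star} and~\ref{lemma:variational_form_logdet}:
\begin{itemize}
\item $G_\ST$ has a unique minimizer $s_{\min}$ on $(0,L_0^{-1}]$, and $\zeta_{\min}=\inf\supp(\mu_\star(\nu))=-G_\ST(s_{\min})$ (compare domains of analyticity).
\item For $z\le\zeta_{\min}$, $s_\star(z)$ is the root of $G_\ST(s)=-z$ on the decreasing branch $s\le s_{\min}$, which is exactly where the stability inequality holds.
\item $K_\ST(s_\star(z);z)=\int\log(\zeta-z)\,\mu_\star(\de\zeta)$: differentiate in $z$, match as $z\to-\infty$, and extend to the edge by continuity.
\item Hence $s_\star$ minimizes $K_\ST$ over the stable branch.
\end{itemize}

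Part~2 rests on the same facts. Your phrase ``writing $s$ as the solution of~\eqref{eq:fp_eq}'' is ambiguous, since there can be two roots. Only the stability inequality singles out $s_\star(-\lambda;\nu)$. That is what lets you trade the support condition $\mu_\star(\nu)((-\infty,-\lambda))=0$ for the fixed-point equality plus that inequality.

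Also, your Step~3 of Part~2 is written in the wrong direction, and an unconstrained $\inf_s$ would lose the $\xi$-constraint. What you need is $\sup_{s\in\cS_0(\nu)}\Phi_\nupart(\nu,\bR,s)\ge\Phi_\nupart(\nu,\bR,s_\star(-\lambda;\nu))$, by continuity if $s_\star$ sits on the boundary of $\cS_0(\nu)$, followed by slicing over the value of $s$. The paper instead uses the equality $\tilde\Phi_\nupart(\nu,\bR)=\Phi_\nupart(\nu,\bR,s_\star(-\lambda;\nu))$ directly from Theorem~\ref{thm:general}.
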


\begin{remark}
    The first point of Theorem \ref{thm:fin_dim_var_formula_k=1} 
    yields a rate function for which the dependence on $\mu,\nu$ is explicit (modulo the supremum over $s$). This is to be compared with the formula in \cite[Theorem 1]{asgari2025local} which contain a term proportional to the log
    potential $\int \log(x+\lambda) \mu_{\star}(\nu)(\de x)$.

    Theorem \ref{thm:fin_dim_var_formula_k=1}  is proven by deriving a variational
    principle for this log potential, whose optimization variable $s$ gives (at the optimum) the Stieltjis transform of $\mu_{\star}(\nu)$ at $-\lambda$.
\end{remark}

\subsection{Rate trivialization}

In this section, we will provide sufficient conditions   under which the rate function of the previous section `trivializes.'

Before doing so, let us introduce the definition of the optimal measure $(\mu^\opt,\nu^\opt)$ that we expect to minimize the rate function $\Phi_\star$ under rate trivialization.

For $v_0,w \in\R, s >0,$  and $\ell :\R^{3} \to\R$,
the proximal operator $\Prox_{\ell(\cdot, v_0, w)}(\,\cdot\,;s) :\R\to\R$ is defined by
\begin{equation}
\label{eq:ProxDefinition}
    \Prox_{\ell(\cdot, v_0, w)}(z;s):=\arg\min_{x\in\R}\left\{ \frac1{2s}(x-z)^2 + \ell(x,v_0,w)\right\}\, .
\end{equation}
In general,  $\Prox_{\ell(\cdot, \bv_0, w)}(\,\cdot\,;s)$ can be viewed as a set-valued function, but we will only apply it to cases in which the minimum is unique as we remark shortly.

\begin{definition}
\label{def:opt_FP_conds}
We say that a pair $(\mu^\opt, \nu^\opt) \in \cuP(\R^{3})\times \cuP(\R^{2})$ satisfy  the \emph{local optimality conditions} if for some $(s^\opt,\bR^\opt) \in (\R_{>0},\sfS_{\succ\bzero}^{2})$, $\bR^\opt=\Big(\begin{matrix}r_{11}^\opt& r_{10}^\opt\\
r_{01}^\opt & r_{00}\end{matrix}\Big)$, we have
   \begin{align}
   \label{eq:NuOptDef}
&\nu^\opt = \mathrm{Law}(v,g_0,w),\quad
v = \Prox_{\ell(\,\cdot\,;g_0,w)}( g; s^\opt),
\quad
\textrm{where}\quad\quad
(g,g_0, w) \sim \cN\left( \bzero_{2}, \begin{pmatrix}
   r_{11}^\opt , r_{10}^\opt\\
  r_{10}^\opt, r_{00}
\end{pmatrix} \right)\otimes \P_w,
\end{align}
and
\begin{equation}
   \mu^\opt_{(t_0)} = \mu_0 ,\quad\quad \mu^\opt_{\cdot|t_0} = \cN\big(r_{10}^\opt r_{00}^{-1} t_0, \schur(\bR^\opt,r_{00})\big) \quad\quad\textrm{for}\quad\quad t_0 \in\R,
\end{equation}
and the following equations are satisfied:
\begin{enumerate}
    \item (Stieltjes transform condition): 
\begin{align}
& \E_{\nu^\opt}\left[\frac{\ell''(v;g_0,w)}{1 + s^\opt \ell''(v;g_0,w)}\right]  + \lambda = \frac{1}{\alpha\, s^\opt},
\nonumber\\
& \E_{\nu^\opt}\left[\Big(\frac{s^\opt\ell''(v;g_0,w)}{1 + s^\opt \ell''(v;g_0,w)}\Big)^2\right] < \frac{1}{\alpha} \, ,
 \quad
s^\opt\in (0, \sfL_{\min}^{-1}).\label{eq:Stieltjis-Condition}
\end{align}
\item (Asymptotic stationarity):
\begin{align}
\label{eq:opt_fp_eqs}
    &\E_{\nu^\opt}\left[\ell'(v;g_0,w)(v,g_0)\right]  + \lambda (r_{11}^\opt,r_{10}^\opt)= \bzero
    \, .
\end{align}
\end{enumerate}
\end{definition}
Note that conditions \eqref{eq:Stieltjis-Condition} and 
\eqref{eq:opt_fp_eqs} have an immediate interpretation. 
Equation~\eqref{eq:Stieltjis-Condition} amounts to 
$\inf \supp(\mu_{\MP}(\nu^{\opt}))\ge -\lambda$ with
$s^\opt$ being the 
Stieltjes transform of $\mu_{\MP}(\nu^{\opt})$  evaluated at $-\lambda$
(recall the definition and interpretation of $\mu_{\MP}(\nu)$ in Section \ref{sec:definitions}).
Equation~\eqref{eq:Stieltjis-Condition} simply corresponds to the condition
$(\btheta,\btheta_0)^{\sT}\nabla\hR_n(\btheta)=0$ that holds at any stationary point.

\begin{remark}
The inequality condition in Eq.~\eqref{eq:Stieltjis-Condition} can also be rewritten
as
\begin{align}
\E\big\{\big(\Prox(g;g_0,w)-g\big)^2\big\} <\frac{1}{\alpha}\, ,
\end{align}
and is analogous of the so-called `replicon condition' in spin glass theory \cite{de1978stability}.
It appeared in the analysis of high-dimensional convex ERM in \cite[Appendix C]{BayatiMontanariLASSO}
where it was proven that it implies high-dimensional convergence of AMP under certain initializations,
as also  proven independently in \cite{bolthausen2014iterative}. 

Here it is related in a direct way to the requirement that the limiting spectral distribution
of the Hessian is supported on the positive semiaxis. For this reason, we will also refer to it as the 
`stability' condition.
\end{remark}

\begin{remark}
Despite $\ell$ being non-convex, the proximal operator in Eq.~\eqref{eq:NuOptDef}
is a well-defined function, in the sense that Eq.~\eqref{eq:ProxDefinition} has a unique minimizer.
Indeed, by Eq.~\eqref{eq:Stieltjis-Condition} we require $s^\opt < 1/\sfL_{\min}$,
so that $1 + s \ell''(v;v_0,w) > 0$ for all $v\in\R$, and therefore 
the optimization problem in Eq.~\eqref{eq:ProxDefinition} is strictly convex.
\end{remark}

The next theorem establishes that rate trivialization occurs for a variety of losses
provided $\alpha$ is larger than a sufficiently large constant. 
\begin{theorem}
\label{thm:trivialization_k=1}
Let Assumptions \ref{ass:regime} to \ref{ass:theta_0} of Section~\ref{sec:assumptions} hold, and 
recall $\Phi_\mupart, \Phi_{\nupart}$ defined in Eq.~\eqref{eq:phi_A_phi_B_defs} and $\Phi_\star$ defined 
in Eq.~\eqref{eq:PhiStarThm} of Theorem~\ref{thm:fin_dim_var_formula_k=1}.
Then:
\begin{enumerate}
    \item
    \label{item:thm_2_item_1}
    If $(\mu^\opt,\nu^\opt)$ satisfy the local optimality condition 
     of Definition~\ref{def:opt_FP_conds} with a corresponding pair $(s^\opt,\bR^\opt)$,
we have  $\mu^\opt \in\cuV_{\mupart}$ and $\nu^\opt \in\cuV_{\nupart}(\bR^\opt)$. Furthermore,
\begin{equation}
\label{eq:Phi_at_opt_is_0}
  \Phi_\star(\mu^\opt,\nu^\opt) = \Phi_\mupart(\mu^\opt) + \Phi_{\nupart}(\nu^\opt, \bR^\opt, s^\opt) = 0.
\end{equation}

\item (Local trivialization at large $\alpha$)
    \label{item:thm_2_item_2}
Define
\begin{equation}
\label{eq:alpha_0_hat_s_def}
\alpha_0 :=  \left(\frac{\sfL_{\min}}{\sfL_{\min} \vee \sfL_{\max}}\right)^2,\quad\quad \textrm{and}\quad\quad
    \tau_0 := \frac12 \left( \frac{1}{\sfL_{\min} \vee \sfL_{\max}}\right),
\end{equation}
and 
    \begin{equation}
        \cuT := \big\{(\mu,\nu) : \schur(\bR(\mu), r_{00}) \le \tau_0^2 \E_{\nu}[\ell'(v,v_0,w)^2]\big\}.
    \end{equation}

If $\alpha > \alpha_0$, then $\Phi_\star$ trivializes over $\cuT$: i.e., 
for any $(\mu,\nu) \in \cuT$ with $\mu\in\cuV_\mupart$, $\nu\in\cuV_\nupart(\bR(\mu))$,
    \begin{equation}
    \Phi_\star(\mu,\nu) \ge 0,
    \end{equation}
    with equality only if $(\mu,\nu) = (\mu^\opt,\nu^\opt)$, where $(\mu^\opt,\nu^\opt)\in\cuT$ solves the equations of Definition~\ref{def:opt_FP_conds}.

\item (Global trivialization at large $\alpha$)
    \label{item:thm_2_item_3}
Let 
\begin{equation}
    \mathrm{B}_\star(\nu,\bR) := \tau_0 \big(\E_\nu[\ell''] + \lambda\big)  + \frac12 \log\left(\frac{\schur(\bR,r_{00})}{\tau_0^2 \E_\nu[\ell'^2]}\right)
\end{equation}
and
\begin{equation}
    \mathrm{G}_\star(\bR) := 
   \frac{ \big\|\E_{(u,u_0)\sim\cN(\bzero,\bR)}\big  [ (u,u_0)\ell'(u;u_0,w) \big]  
     + \lambda (r_{11},r_{10})
    \big\|_2^2}
    {
    2\,\|\bR\|_\op
    \left(\tau_0^{-1}\, \schur(\bR,r_{00})^{1/2} + \|\ell'\|_{\Lip} \Tr(\bR)^{1/2}\right)^2
    }.
\end{equation}

If $(\mu,\nu)\in\cuT^c$, then
\begin{equation}
    \Phi_\star(\mu,\nu) 
    \ge 
    \Phi_\mupart(\mu) + 
    \rmG_\star(\bR(\mu))  - \frac1\alpha \rmB_{\star}(\nu,\bR(\mu)) - \frac1\alpha\log(\alpha)
\end{equation}
\end{enumerate}
\end{theorem}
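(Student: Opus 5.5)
The plan is to reduce everything to one change-of-variables identity for $\Phi_\nupart$. For $s<\sfL_{\min}^{-1}$, consider the map $T_s(v,v_0,w):=(v+s\ell'(v;v_0,w),v_0,w)$. It is strictly increasing in $v$, with Jacobian $1+s\ell''>0$. Writing $\gamma_{\bR}$ for the density of $\cN(\bzero,\bR)$, the invariance of KL under bijections gives
\begin{equation*}
\KL(\nu_{\cdot|w}\|\cN(\bzero,\bR))=\KL\big((T_s\#\nu)_{\cdot|w}\|\cN(\bzero,\bR)\big)+\E_\nu[\log(1+s\ell'')]+\E_\nu\Big[\log\frac{\gamma_{\bR}(v+s\ell',v_0)}{\gamma_{\bR}(v,v_0)}\Big].
\end{equation*}
Let $\sigma^2:=\schur(\bR,r_{00})$, $a:=r_{10}/r_{00}$ and $m:=\E_\nu[\ell'^2]$. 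The Gaussian log-ratio has expectation $-\frac{1}{2\sigma^2}\big(2s\E_\nu[\ell'(v-av_0)]+s^2m\big)$.

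The constraint $\nu\in\cuV_\nupart(\bR)$ gives $\E_\nu[\ell'(v-av_0)]=-\lambda(r_{11}-ar_{10})=-\lambda\sigma^2$. The $\lambda s$ terms and the $\log(1+s\ell'')$ terms then cancel against those in $\Phi_\nupart$. With $x:=s^2m/\sigma^2$ this leaves
\begin{equation*}
\Phi_\nupart(\nu,\bR,s)=\KL\big((T_s\#\nu)_{\cdot|w}\|\cN(\bzero,\bR)\big)+\frac{1}{2\alpha}\big(1+\log(\alpha x)\big)-\frac{x}{2}.
\end{equation*}
The function $x\mapsto\frac1{2\alpha}(1+\log(\alpha x))-\frac x2$ is maximized at $x=1/\alpha$, where it equals $0$.

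For item~\ref{item:thm_2_item_1}, the membership statements are immediate. $\mu^\opt$ has marginal $\mu_0$. Stationarity gives the linear constraint for $\nu^\opt$. The Stieltjes condition says $s^\opt$ solves \eqref{eq:fp_eq} at $z=-\lambda$. The strict inequality in \eqref{eq:Stieltjis-Condition} places $-\lambda$ to the left of $\supp\mu_\star(\nu^\opt)$.

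The value is then computed as follows.
\begin{itemize}
\item $\Phi_\mupart(\mu^\opt)=0$, because its conditional law is exactly the reference Gaussian.
\item For the supremum over $s$: $\partial_s\Phi_\nupart=-\lambda-\E[\ell''/(1+s\ell'')]+1/(\alpha s)$, which vanishes at $s^\opt$. Also $\partial_s^2\Phi_\nupart=s^{-2}\big(\E[(s\ell''/(1+s\ell''))^2]-1/\alpha\big)<0$ on all of $\cS_0(\nu)$. Provided $\cS_0(\nu^\opt)$ is an interval, $\Phi_\nupart$ is concave there and the supremum is attained at $s^\opt$.
\item Because $T_{s^\opt}\#\nu^\opt=\cN(\bzero,\bR^\opt)\otimes\P_w$ by the definition of the prox, the KL term vanishes.
\item It remains to show $x=1/\alpha$, i.e. $(s^\opt)^2m=\sigma^2/\alpha$. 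Apply Gaussian integration by parts in $g$ given $g_0$, using $\partial_g\ell'(v)=\ell''/(1+s\ell'')$:
\begin{equation*}
\E[\ell'(g-ag_0)]=\sigma^2\,\E\Big[\frac{\ell''}{1+s\ell''}\Big].
\end{equation*}
Combining this with $v=g-s\ell'$ and the stationarity identity gives $sm=\sigma^2\big(\E[\ell''/(1+s\ell'')]+\lambda\big)=\sigma^2/(\alpha s)$.
\end{itemize}
Hence $\Phi_\star=0$.

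For item~\ref{item:thm_2_item_2}, take $(\mu,\nu)\in\cuT$ and bound the supremum from below by the single choice $s_\ast:=\sigma/\sqrt{\alpha m}$, which gives $x=1/\alpha$. This yields $\Phi_\star(\mu,\nu)\ge\Phi_\mupart(\mu)+\KL\big((T_{s_\ast}\#\nu)_{\cdot|w}\|\cN(\bzero,\bR)\big)\ge0$. The work is in checking $s_\ast\in\cS_0(\nu)$. On $\cuT$ we have $s_\ast\le\tau_0/\sqrt\alpha$, so $s_\ast\sfL_{\min}\le1/2$. Then $|s\ell''/(1+s\ell'')|\le 2s_\ast(\sfL_{\min}\vee\sfL_{\max})\le1/\sqrt\alpha$. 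The choice of $\alpha_0,\tau_0$ in \eqref{eq:alpha_0_hat_s_def} is designed to make this strict when $\alpha>\alpha_0$. This constant bookkeeping is the step I expect to need care, possibly tightening to use $\ell''\ge-\sfL_{\min}$ separately from $\ell''\le\sfL_{\max}$.

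For the equality case of item~\ref{item:thm_2_item_2}, both KL terms must vanish. Then $\mu_{\cdot|t_0}$ is the reference Gaussian, and $T_{s_\ast}\#\nu=\cN(\bzero,\bR)\otimes\P_w$, so $\nu$ is the prox-law \eqref{eq:NuOptDef} with $s=s_\ast$. Running the integration-by-parts identity backwards, $s_\ast^2m=\sigma^2/\alpha$ together with stationarity yields the Stieltjes equation at $s_\ast$. The stability inequality holds because $s_\ast\in\cS_0$. So $(\mu,\nu)$ satisfies Definition~\ref{def:opt_FP_conds}.

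For item~\ref{item:thm_2_item_3}, on $\cuT^c$ the value $s_\ast$ may fall outside $\cS_0$, so I would instead plug in the admissible $s=\tau_0/\sqrt\alpha$. Then $x=\tau_0^2m/(\alpha\sigma^2)$, and the scalar part evaluates to an expression bounded below by $-\frac1\alpha\rmB_\star-\frac1\alpha\log\alpha$. The $\tau_0(\E\ell''+\lambda)$ piece arises from bounding $\log(1+s\ell'')$ and $\lambda s$ if these are not fully cancelled at this $s$.

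The KL term must then supply $\rmG_\star$. I would bound it by Talagrand's transport inequality, $\KL\ge W_2^2/(2\|\bR\|_\op)$. To bound $W_2\big(T_s\#\nu,\cN(\bzero,\bR)\otimes\P_w\big)$ from below, test against the map $(u,u_0)\mapsto(u,u_0)\ell'(u;u_0,w)$. Under $\nu$, pulled back through $T_s$, its mean is $-\lambda(r_{11},r_{10})$ up to the $s\ell'$ correction. Its Lipschitz constant on the relevant set is controlled by $\tau_0^{-1}\sigma+\|\ell'\|_\Lip\Tr(\bR)^{1/2}$ after Cauchy--Schwarz. This produces exactly $\rmG_\star(\bR)$. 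Matching the precise Lipschitz and Cauchy--Schwarz constants here is the main obstacle of the whole proof.
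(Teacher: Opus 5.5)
Items 1 and 2 of your proposal are correct and are essentially the paper's argument. Your $T_s$ change-of-variables identity is Lemma~\ref{lemma:prox_density} written as a pushforward, and your $s_\ast$ is the paper's $\hat s$. The admissibility bookkeeping you anticipate is Lemma~\ref{lemma:bars_bound_enough}; strictness comes from $s\sfL_{\min}<1/2$, which is where $\alpha>\alpha_0$ is used. The interval property of $\cS_0(\nu)$ that you assume in item 1 does hold, because $s\mapsto (s\ell''/(1+s\ell''))^2$ is nondecreasing for each fixed value of $\ell''$ on $\{1+s\ell''>0\}$.

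The gap is in item 3. Once you apply the $T_s$-identity, the only divergence left is $\KL((T_s\#\nu)_{\cdot|w}\|\cN(\bzero,\bR))$, and Talagrand's inequality applied to it does not give $\rmG_\star(\bR)$. The constraint in $\cuV_\nupart(\bR)$ fixes $\E_\nu[(v,v_0)\ell'(v;v_0,w)]=-\lambda(r_{11},r_{10})$. Under $T_s\#\nu$, however, your test function has mean $\E_\nu[(v+s\ell',v_0)\,\ell'(v+s\ell';v_0,w)]$, which differs from this by $O(s)$ terms that do not vanish. Likewise, Cauchy--Schwarz then produces $\E_{T_s\#\nu}[\ell'(u;u_0,w)^2]^{1/2}$, whereas the $\cuT^c$ condition only gives $\E_\nu[\ell'^2]^{1/2}<\tau_0^{-1}\schur(\bR,r_{00})^{1/2}$. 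So this route yields $\rmG_\star$ only up to correction terms, not the stated inequality.

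Note also that in the pushforward form the $\lambda s$ and $\E_\nu\log(1+s\ell'')$ terms cancel for every $s$, since that cancellation used only stationarity. Nothing then remains that could produce $\tau_0(\E_\nu\ell''+\lambda)$. Your sketch of item 3 is mixing two different representations of $\Phi_\nupart$.

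The paper's route is to not transform at all in item 3. Keep $\Phi_\nupart$ in its defining form with $\KL(\nu_{\cdot|w}\|\cN(\bzero,\bR))$. For any coupling of $\nu_{\cdot|w}$ with $(x,x_0)\sim\cN(\bzero,\bR)$, bound
$\|\E[\ell'(v;v_0,w)(v,v_0)-\ell'(x;x_0,w)(x,x_0)]\|_2\le\big(\E_\nu[\ell'^2]^{1/2}+\|\ell'\|_{\Lip}\Tr(\bR)^{1/2}\big)\big(\E\|(v,v_0)-(x,x_0)\|_2^2\big)^{1/2}$.
Combine this with three facts: the exact stationarity identity under $\nu$, the bound $\E_\nu[\ell'^2]<\tau_0^{-2}\schur(\bR,r_{00})$ on $\cuT^c$, and $W_2^2\le 2\|\bR\|_{\op}\KL$. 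Together they give $\KL(\nu_{\cdot|w}\|\cN(\bzero,\bR))\ge\rmG_\star(\bR)$ directly.

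Then use $-\lambda s-\E_\nu\log(1+s\ell'')\ge -s(\lambda+\E_\nu\ell'')$ and take $s=\tau_0/\alpha$ rather than $\tau_0/\sqrt\alpha$. This choice is admissible once $\alpha\ge 1$, since then $\tau_0/\alpha\le\tau_0/\sqrt\alpha$. It is what produces the $\frac1\alpha\tau_0(\E_\nu\ell''+\lambda)$ term. Through $\frac1{2\alpha}\log(\alpha e s^2\E_\nu[\ell'^2]/\schur(\bR,r_{00}))$, it also produces the $\log\alpha$ in the statement. With $s=\tau_0/\sqrt\alpha$ in this form you would only get $-\frac{\tau_0}{\sqrt\alpha}(\E_\nu\ell''+\lambda)$. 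That does not imply the claimed bound for large $\alpha$ when $\E_\nu\ell''+\lambda>0$.
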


\begin{remark}
We note that $\Phi_\mupart(\mu)$ and $\rmG_\star(\bR(\mu))$ are positive quantities and therefore the last 
result implies rate trivialization for $\alpha$ large enough, provided we can upper bound 
$\mathrm{B}_\star(\nu,\bR)$ and lower bound   $\rmG_\star(\bR(\mu))$ uniformly over $\mu,\nu\in\cuT^c$.
In some cases, by an a priori estimate, it is sufficient  to prove such bounds over subsets of $\cuT^c$.
\end{remark}

\begin{remark}
The fact that the optimality conditions of Definition \ref{def:opt_FP_conds} (in particular under the
replicon/stability condition) characterize the empirical risk minimizer
was proven for problems of the class considered here in \cite{vilucchio2025asymptotics}, under some technical conditions
on the solution itself. Theorem \ref{thm:trivialization_k=1} recover this result (at $\alpha$ large enough)
while also establishing that no other local minima exist. In particular, our result can be used to
establish convergence of gradient descent algorithms to the global minimum.
\end{remark}

\section{Non-convex M-estimation}
\subsection{General theory}

To give a more concrete version of the results above, we specialize to 
the case of learning a linear model by minimizing a loss that depends uniquely on
the prediction error $\btheta^{\sT}\bx_i-y_i$.
Namely, we assume $y_i = \btheta_0^{\sT}\bx_i+w_i$ and, for  
$\ell:\R\to\R$, we form the empirical risk
\begin{equation}
\label{eq:ERM_translation_invariant}
    \hat R_n(\btheta) := \frac1n\sum_{i=1}^n\ell\big(\bx_i^\sT\btheta- y_i\big)= \frac1n\sum_{i=1}^n\ell\big(\bx_i^\sT(\btheta-\btheta_0)- w_i\big)\, .
\end{equation}
The `M-estimator' $\hbtheta=\arg\min\hR_n(\btheta)$
 has been the object of a long line of work in high-dimensional statistics, see e.g. \cite{huber1973robust,mammen1989asymptotics,donoho2016high,el2013robust}.

For a given $\sfA_R,\sfa_L>0$,
we redefine the set $\cG_n$ of Eq.~\eqref{eq:well_behaved} in this setting
\begin{align}
    \cG_n(\sfA_R,\sfa_L) :=
    &\left\{
   \btheta \in\Ball^d(\btheta_0, \sfA_R),\;
   \frac1n\sum_{i=1}^n \ell'(\bx_i^\sT(\btheta-\btheta_0) - w_i)^2  > \sfa_L^2,\;  
   \sigma_{\min}\big(\bJ \bg_0(\bX(\btheta-\btheta_0), \bw)\big) > e^{-\overline{o}(n)}
    \right\}\, ,\nonumber
\end{align}
where $\bg_0(\bv, \bw) :=\bv^\sT\ell'(\bv-\bw)/n$.

The next result is proven by applying Theorem~\ref{thm:trivialization_k=1}
to the M-estimation setting.
\begin{theorem}
\label{cor:robust_regression}
Consider the ERM problem in~\eqref{eq:ERM_translation_invariant}
under Assumptions~\ref{ass:regime}-\ref{ass:theta_0}
(in particular, $\|\btheta_0\|_2^2\to r_{00}$)
and introduce the notation
$\sfL_{\vee} := \sfL_{\max}\vee\sfL_{\min}$.
Fix parameters $\sfA_R,\sfa_L >0$.
If there exists $\alpha_\star >1$ such that
\begin{equation}
    \alpha_\star > 
    \frac{ 
    9 \sfL_\vee^2
    }{\E[\ell''(\beta Z - w)]^2}
    \log\left( \frac{4\, e\,\beta^2 \sfL_\vee^2 \alpha_\star^2}{ \sfa_L^2}\right) 
    \quad \textrm{for all} \quad \beta \in \left[ \frac{\sfa_L}{2 \sfL_\vee} ,\sfA_R\right]
    \quad
\textrm{where}\quad
(Z,w)\sim\cN(0,1)\otimes \P_w,
\end{equation}
then
for all $\alpha > \alpha_\star$,
the following hold:
\begin{enumerate}
    \item 
    \label{item:cor_1_item_1}
For any $\cuA,\cuB\in\cuP(\R)\times \cuP(\R^{2})$ open in the $W_2$ topology,
we have for some $c_0>0$ and some high probability set $\Omega_1'$ 
\begin{equation}
\lim_{\delta\to0}\lim_{n\to\infty}\frac{1}{n}\log\E\left[Z_n(\cuA,\cuB;\sfA_R,\sfa_L) \one_{\Omega_\delta\cap\Omega_1'}\right] \le 0,
\end{equation}
where the inequality is strict 
unless there exist
$(\mu^\opt,\nu^\opt)\in \overline{\cuA\times\cuB}$  (the closure of $\cuA\times\cuB$)
given by
\begin{align}
\nu^\opt &= \mathrm{Law}(v,g_0,w),\quad
v = g_0+w+v_1\, ,\;\;\;\; v_1=\Prox_{\ell}( g_1; s^\opt),
\label{eq:NuoptRegr}\\
&\quad\quad\quad
(g_1,g_0, w) \sim \cN(0,(\rho^{\opt})^2)\otimes \cN(0,r_{00})\otimes \P_w,\nonumber\\
 \mu^\opt_{(t_0)} &= \mu_0 ,\quad\quad \mu^\opt_{\cdot|t_0} = \cN\big(t_0, (\rho^{\opt})^2\big) \quad\quad\textrm{for}\quad\quad t_0 \in\R,
\end{align}
with $(\rho^{\opt},s^\opt)$ satisfying the following  equations
\begin{align}
\label{eq:ST_robust}
& \E_{\nu^\opt}\left[\frac{\ell''(v_1)}{1 + s^\opt \ell''(v_1)}\right]   = \frac{1}{\alpha\, s^\opt},
\quad 
 \E_{\nu^\opt}\left[\Big(\frac{s^\opt\ell''(v_1)}{1 + s^\opt \ell''(v_1)}\Big)^2\right] < \frac{1}{\alpha},
 \quad
s^\opt\in (0, \sfL_{\min}^{-1})\, ,\\
    &\E_{\nu^\opt}\left[\ell'(v_1)v_1\right]  = 0
    \, .\label{eq:ST_robust_2}
\end{align}
As a consequence, the only possible limit points of $(\hat\mu(\hat\btheta),\hnu(\hat\btheta))$ for local minima $\hat\btheta \in\cG_n(\sfA_R,\sfa_L)$ are measures solving the system of equations 
 \eqref{eq:ST_robust} to \eqref{eq:ST_robust_2}.
\item
    \label{item:cor_1_item_2}
Further, if Eqs.~\eqref{eq:ST_robust}, \eqref{eq:ST_robust_2}  have a unique solution $(\mu^\opt,\nu^\opt)$,
then for all $\eps>0$,
\begin{equation}
    \lim_{n,d\to\infty}\P\big(\exists \;\textrm{\rm{local min.}}\; \hat\btheta\in\cG_n(\sfA_R,\sfa_L) :  W_{2}(\hmu(\hat\btheta), \mu^\opt) + W_2(\hnu(\hat\btheta), \nu^\opt) > \eps\big) = 0.
\end{equation}
\end{enumerate}
\end{theorem}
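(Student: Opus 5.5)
We reduce Theorem~\ref{cor:robust_regression} to Theorem~\ref{thm:fin_dim_var_formula_k=1}, item~\ref{item:thm_1_item_1}, and Theorem~\ref{thm:trivialization_k=1}, specialized to $\lambda=0$ and the translation-invariant loss $\ell(v;v_0,w)=\ell(v-v_0-w)$. The first step is a change of variables. Set $v_1:=v-v_0-w$, so that $t-t_0$ has conditional law $\cN(r_{10}r_{00}^{-1}t_0-t_0,\schur(\bR,r_{00}))$ under the Gaussian reference, and write $\rho^2:=\|\btheta-\btheta_0\|^2$. In these variables the stationarity constraints defining $\cuV_\nupart(\bR)$ become
\begin{equation*}
\E[v_1\ell'(v_1)]+\E[(v_0+w)\ell'(v_1)]=0,\qquad \E[v_0\ell'(v_1)]=0 .
\end{equation*}
At the optimum we expect $v_0$ to be independent of $(v_1,w)$, which gives \eqref{eq:ST_robust_2}. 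Substituting into Definition~\ref{def:opt_FP_conds}, with $r_{10}^\opt=r_{00}$ and $\schur(\bR^\opt,r_{00})=(\rho^\opt)^2$, yields \eqref{eq:NuoptRegr}--\eqref{eq:ST_robust}. Item~\ref{item:thm_2_item_1} then gives $\Phi_\star(\mu^\opt,\nu^\opt)=0$, and hence the exponent is at most $0$ whenever $(\mu^\opt,\nu^\opt)$ lies in the closure.

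For strict negativity away from the optimum, we split into the regions $\cuT$ and $\cuT^c$. On $\cuT$ with $\alpha>\alpha_0$, item~\ref{item:thm_2_item_2} gives $\Phi_\star\ge0$, with equality only at solutions of the optimality equations. On $\cuT^c$ we use item~\ref{item:thm_2_item_3} and must show
\begin{equation*}
\Phi_\mupart+\rmG_\star-\alpha^{-1}\rmB_\star-\alpha^{-1}\log\alpha>0
\end{equation*}
uniformly over the admissible set $\cuG_\mupart\times\cuG_\nupart$. The bound on $\rmB_\star$ is direct. We have $\tau_0\E[\ell'']\le \frac12$, and on $\cuT^c$ we have $\schur>\tau_0^2\E\ell'^2>\tau_0^2\sfa_L^2$. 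Since $\bR\prec\sfA_R^2$ and $\E\ell'^2>\sfa_L^2$, the logarithmic term is at most $\log(\sfA_R^2/(\tau_0^2\sfa_L^2))$.

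The lower bound on $\rmG_\star$ is the main step. Here we compute the Gaussian expectation $\E[(u,u_0)\ell'(u-u_0-w)]$ via Stein's lemma. Writing $\beta^2:=\Var(u-u_0)=r_{11}-2r_{10}+r_{00}$, so that $\beta$ is the normalized distance $\|\btheta-\btheta_0\|$, the combination $\E[(u-u_0)\ell'(u-u_0-w)]=\beta^2\E[\ell''(\beta Z-w)]$. The norm of the two-dimensional vector is therefore at least $\beta^2|\E\ell''(\beta Z-w)|/\sqrt2$. The denominator of $\rmG_\star$ is bounded using $\|\bR\|_\op,\Tr\bR\lesssim \sfA_R^2$; more carefully, $\schur\le\beta^2$, and $\|\ell'\|_{\Lip}\le\sfL_\vee$. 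This should give
\begin{equation*}
\rmG_\star\gtrsim \frac{\E[\ell''(\beta Z-w)]^2}{\sfL_\vee^2}
\end{equation*}
up to the constant $9$. On $\cuT^c$ we also have $\beta\ge\sqrt{\schur}>\tau_0\sfa_L=\sfa_L/(2\sfL_\vee)$, which produces the range $\beta\in[\sfa_L/(2\sfL_\vee),\sfA_R]$ in the hypothesis. Matching the bound $\rmB_\star+\log\alpha$, expressed as $\log(4e\beta^2\sfL_\vee^2\alpha^2/\sfa_L^2)$, against $\alpha\rmG_\star$ then gives exactly the stated condition on $\alpha_\star$; the condition is monotone in $\alpha$, so it persists for all $\alpha>\alpha_\star$. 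The hard part is tracking the constants (the $9$ and the $4e$) through the denominator of $\rmG_\star$, using $\Phi_\mupart\ge0$, and handling the dependence on $r_{10}$ when $\bR$ is not of the form $\bR^\opt$. I would bound that contribution crudely via $\Tr\bR\le (\beta+\sqrt{r_{00}})^2$.

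To conclude, lower semicontinuity of $\Phi_\star$ together with compactness of the relevant sublevel sets gives strict negativity of the exponent on the open sets $\cuA\times\cuB$ whose closure avoids the optimizers, which proves item~\ref{item:cor_1_item_1}. For item~\ref{item:cor_1_item_2}, we take $\cuA\times\cuB$ to be the complement of the $\eps$-balls around $(\mu^\opt,\nu^\opt)$, apply Markov's inequality to $Z_n$, and use that $\Omega_\delta\cap\Omega_1'$ holds with high probability.
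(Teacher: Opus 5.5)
\textbf{The proposal has a genuine gap,} in the step you defer as ``tracking the constants''. You work in the two-column formulation: you keep $\btheta_0$, write $\ell(v;v_0,w)=\ell(v-v_0-w)$, and carry a $2\times 2$ matrix $\bR$. In that formulation the claimed bound $\rmG_\star\gtrsim\E[\ell''(\beta Z-w)]^2/\sfL_\vee^2$ is false, so the stated threshold cannot be reached.

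With $\lambda=0$, Stein's lemma gives the Gaussian vector in $\rmG_\star$ exactly as $\E[\ell''(\beta Z-w)]\,(r_{11}-r_{10},\,r_{10}-r_{00})$. By Cauchy--Schwarz its squared norm is at most $\E[\ell'']^2\beta^2\Tr\bR$. The denominator of $\rmG_\star$ is at least $2\|\bR\|_\op\|\ell'\|_\Lip^2\Tr\bR$, and $\|\bR\|_\op\ge r_{00}$. Hence $\rmG_\star$ is at most of order $(\beta^2/r_{00})\,\E[\ell'']^2/\sfL_\vee^2$; your own $\beta^4$ lower bound only gives order $\beta^4/r_{00}^2$. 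This degenerates when $\beta^2\ll r_{00}$, for instance near the lower end $\beta=\sfa_L/(2\sfL_\vee)$ of the range. Any threshold obtained this way therefore depends on $r_{00}$, while the hypothesis on $\alpha_\star$ does not. (Also, $\Tr\bR=r_{11}+r_{00}$, so your bound $\Tr\bR\le(\beta+\sqrt{r_{00}})^2$ drops a summand.)

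The setup also does not match the statement. The theorem works with $\cuA\subseteq\cuP(\R)$, $\cuB\subseteq\cuP(\R^2)$, and a $\cG_n$ defined by the ball $\Ball^d(\btheta_0,\sfA_R)$ and the scalar Jacobian of $\bg_0$. Theorem~\ref{thm:fin_dim_var_formula_k=1} in the original variables controls different objects: $\bR\prec\sfA_R^2$ and a $2$-dimensional Jacobian. That yields only $\beta<\sqrt2\,\sfA_R$, and it counts a different set of minimizers.

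\textbf{The missing idea is to change variables at the level of the parameter.} Put $\bbeta=\btheta-\btheta_0$ and apply Theorems~\ref{thm:fin_dim_var_formula_k=1} and~\ref{thm:trivialization_k=1} to the problem with loss $\ell(\bx_i^\sT\bbeta-w_i)$ and no planted vector. Then $\bR$ becomes the scalar $r=\|\bbeta\|_2^2$, so $\|\bR\|_\op$, $\Tr\bR$ and the Schur complement all equal $r$. Stein gives numerator $r^2\E[\ell''(r^{1/2}Z-w)]^2$. With $\tau_0^{-1}=2\sfL_\vee$ and $\|\ell'\|_\Lip\le\sfL_\vee$, the denominator is at most $2r\,(3\sfL_\vee r^{1/2})^2=18\sfL_\vee^2r^2$. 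So $\rmG_\star\ge\E[\ell''(r^{1/2}Z-w)]^2/(18\sfL_\vee^2)$, with the powers of $r$ cancelling exactly. Together with $\rmB_\star\le\frac12\log(4e\,r\sfL_\vee^2/\sfa_L^2)$, this is precisely the stated condition. The range $\beta=r^{1/2}\in[\sfa_L/(2\sfL_\vee),\sfA_R]$ comes from $\cuT^c$ and from the ball in $\cG_n$.

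This reparameterization also replaces your heuristic that $v_0$ is independent of $(v_1,w)$. As written, that heuristic would not give \eqref{eq:ST_robust_2}: with $v_1$ the residual, your two stationarity equations combine to $\E[(v_1+w)\ell'(v_1)]=0$. In the reparameterized problem the single stationarity constraint is $\E[u\,\ell'(u-w)]=0$ with $u=\bx^\sT\bbeta$, which is what \eqref{eq:ST_robust_2} expresses.

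The rest of your outline matches the paper: item~2 of Theorem~\ref{thm:trivialization_k=1} on $\cuT$ (valid since $\alpha_0\le 1$), and Markov's inequality on the complements of $\eps$-balls for the second claim.
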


\subsection{Tukey regression}
\label{sec:tukey}

As a concrete illustration of our results, we consider robust regression with Tukey loss. 
Namely, given data $(\bx_i,y_i)$, $i\le n$, we  minimize the empirical risk \eqref{eq:ERM_translation_invariant}
where $\ell(\,\cdot\,)=\ell_\tuk(\,\cdot\,;\kappa)$ is the Tukey loss defined by
\begin{equation}
    \ell_\tuk(t; \kappa) =  \begin{cases}
    \displaystyle
       \frac{\kappa^2}{6} - \frac{\kappa^2}{6} \left( 1 - \frac{t^2}{\kappa^2}\right)^3 & \mbox{ if }|t| \le \kappa,\\
    \displaystyle\frac{\kappa^2}{6}        & \mbox{ if } |t| > \kappa,
    \end{cases}
\end{equation}
for  $\kappa > 0$ a parameter.
Notice that the maximum value of the loss is $\kappa^2/6.$

In what follows, we let $(\mu^\opt,\nu^\opt, r^\opt,s^\opt)$ denote the solution (if it exists uniquely) of the fixed point equations in Theorem~\ref{cor:robust_regression}.
\begin{corollary}[Tukey regression]
\label{prop:tukey}
Fix  $\kappa,\sfA_R>0$.
Assume $\btheta_0$ satisfies Assumption~\ref{ass:theta_0}, and that $\P_w$ has a finite second moment and satisfies,
for some $b_0 \in (0,\kappa/2),b_1 >0$,
\begin{equation}
   \P_w\left( |w| \in (b_0 ,\kappa - b_0) \right) > b_1.
\end{equation}
For open subsets $\cE \subseteq  (0,\kappa^2/6]$, and $\cR \subseteq (0,\sfA_R],$
let $\cZ_{\cE,\cR}$ be the set of minimizers achieving an estimation error in $\cR$ and a training error in $\cE$.
Namely:
   \begin{equation}
       \cZ_{\cE,\cR} := \left\{
      \btheta : \grad \hat R_n(\btheta) = \bzero,\; 
      \grad^2 \hat R_n(\btheta) \succeq \bzero,\; 
        \hat R_n(\btheta) \in\cE ,\;
        \|\btheta - \btheta_0\|_2 \in \cR
       \right\}.
   \end{equation} 
   \begin{enumerate}
       \item 
       With the definitions of Theorem~\ref{thm:fin_dim_var_formula_k=1}, there exists $\alpha_1 \equiv \alpha_1(\sfA_R,\kappa)$ 
       and a high probability event $\Omega_2'$ 
       such that for all $\alpha > \alpha_{1}$, $\cE \subseteq  (0,\kappa^2/6]$, and $\cR \subseteq (0,\sfA_R],$
\begin{equation}
 \lim_{\delta\to0}\lim_{\substack{n\to\infty \\ n/d \to\alpha}}\frac{1}{n}\log\E\left[|\cZ_{\cE,\cR}| \one_{\Omega_{\delta}\cap\Omega_2'}\right]
 \le
-\inf_{\substack{ \rho  \in \cR \\
\iota \in \cE
}}  
\Phi_{\tuk}(\rho;\iota)\, .\label{eq:EZ_Tukey}
\end{equation}
where 
\begin{align}
\Phi_{\tuk}(\rho;\iota) :=
    \inf_{\substack{s\in(0,\infty)\\ g \in [0,\frac{16\,\kappa} {25 \sqrt{5}} ]}}
    \sup_{
    \substack{
    \eta, \gamma, \xi,\beta  \in\R\\
    \zeta\ge 0
    }
    }
\bigg\{
&
 \gamma g^2
+ \frac{\xi - \zeta}{\alpha}
- \eta \,\iota
    +  \frac1{2\alpha}\log \left(
    \frac{
    \alpha\, e\,s^2\, t^2}{\rho^2}\right)
    -\E_w[\log X(s,\rho; \gamma,\xi,\eta,\beta,\zeta)]
    \bigg\}\, ,\label{eq:PhiLb_First}
\end{align}
with 
\begin{align}
  X(s,\rho; \gamma,\xi,\eta,\beta,\zeta) &:= 
    \int_{\{v : 1 + s \ell''(v-w) >0\}} \exp\big\{
 \gamma\ell'(v-w)^2
 + \beta v \ell'(v- w)
 + \eta\,\psi\big\}\\
 &\hspace{10mm}\cdot
  \exp\bigg\{\xi \frac{s \ell''}{1 + s\ell''}  
-
\zeta
\bigg(\frac{s \ell''}{1 + s\ell''}\bigg)^2
\bigg\}
    (1 + s \ell'')
    \gamma_{\rho} (\de v)\, .
\end{align}
 
In particular, if there exist unique $(\rho^\star,\iota^\star) \in(0,\sfA_R]\times (0,\kappa^2/6]$ such that
\begin{align}
  &\Phi_{\tuk}(\rho,\iota) \ge 0   \quad\quad \forall(\rho,\iota)  \in (0,\sfA_R]\times (0,\kappa^2/6]=:\cR\times\cE
\end{align} 
with equality if and only if $(\rho,\iota) = (\rho^\star,\iota^\star)$, then for any $\eps >0$:
       \begin{equation}
       \lim_{n\to\infty}\P\Big( \exists \hat\btheta \in\cZ_{\cE,\cR} \;\textrm{s.t.}\; \big| \|\hat\btheta-\btheta_0\|_2 - \rho^{\star}\big| + 
       \big| 
       \hat R_n(\hat\btheta)- \iota^\star \big| > \eps
       \Big) = 0.
       \end{equation}
       \item 
       Assume that for all $\beta \in (0,\sfA_R]$,
       \begin{equation}
           \E[\ell''_\tuk(\beta Z + W ; \kappa)] >0 ,\quad\quad (Z,W) \sim \cN(0,1)\otimes \P_w.
       \end{equation}
       Then there exists some $\alpha_2 \equiv \alpha_2(\sfA_R,\kappa)>1$ so that 
       for all $\alpha >\alpha_2$, 
       with $\cuM^\opt \times\cuV^\opt$ denoting the collection of 
       $(\mu^\opt,\nu^\opt)$ satisfying Definition~\ref{def:opt_FP_conds}, 
       we have for any $\eps>0$, 
       $\cE \subseteq  (0,\kappa^2/6]$, and $\cR \subseteq (0,\sfA_R],$
       \begin{equation}
       \lim_{n\to\infty}\P\Big( \exists \hat\btheta \in\cZ_{\cE,\cR} \;\textrm{s.t.}\; W_2(\hnu(\hat\btheta), \cuV^\opt) + 
       W_2(\hmu(\hat\btheta), \cuM^\opt) > \eps
       \Big) = 0.
       \end{equation}
   \end{enumerate}
\end{corollary}

In the following section, we compare the predictions of the 
last corollary with numerical experiments. In particular, we
will consider local minima with a value of training error below
a certain preassigned level $\iota_\star$, i.e. use $\cE=(-\infty,\iota_\star]$
in Eq.~\eqref{eq:EZ_Tukey}. 

    \begin{figure}[t] 
        \centering 
        \begin{subfigure}[t]{0.49\textwidth} 
            \centering
            \includegraphics[width=\textwidth]{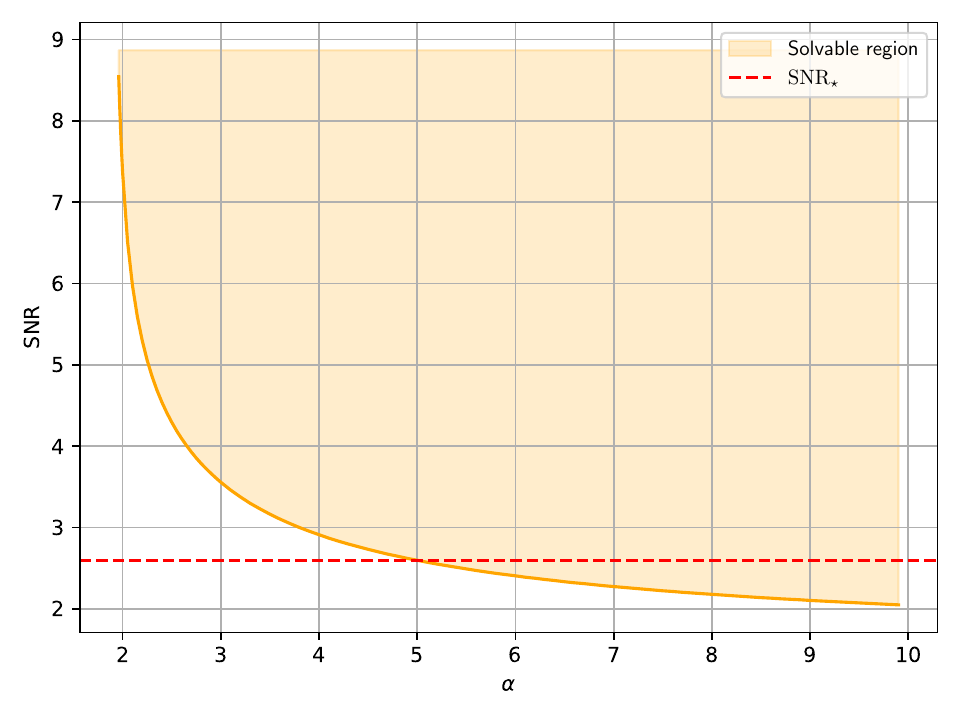} 
            \caption{
            Solving the optimality conditions of Definition~\ref{def:opt_FP_conds} for Tukey regression.
            The shaded region corresponds to values of $(\alpha,\SNR)$ for which the iterative scheme of Section \ref{sec:Numerical} converges to a solution $(\rho^\star,s^\star)$.
            The dashed line corresponds to the value $\SNR=\SNR_\star$ at which we produce all remaining plots of this section. 
            }
            \label{fig:FP_region}
        \end{subfigure}%
        \hfill 
        \begin{subfigure}[t]{0.49\textwidth} 
            \centering
            \includegraphics[width=\textwidth]{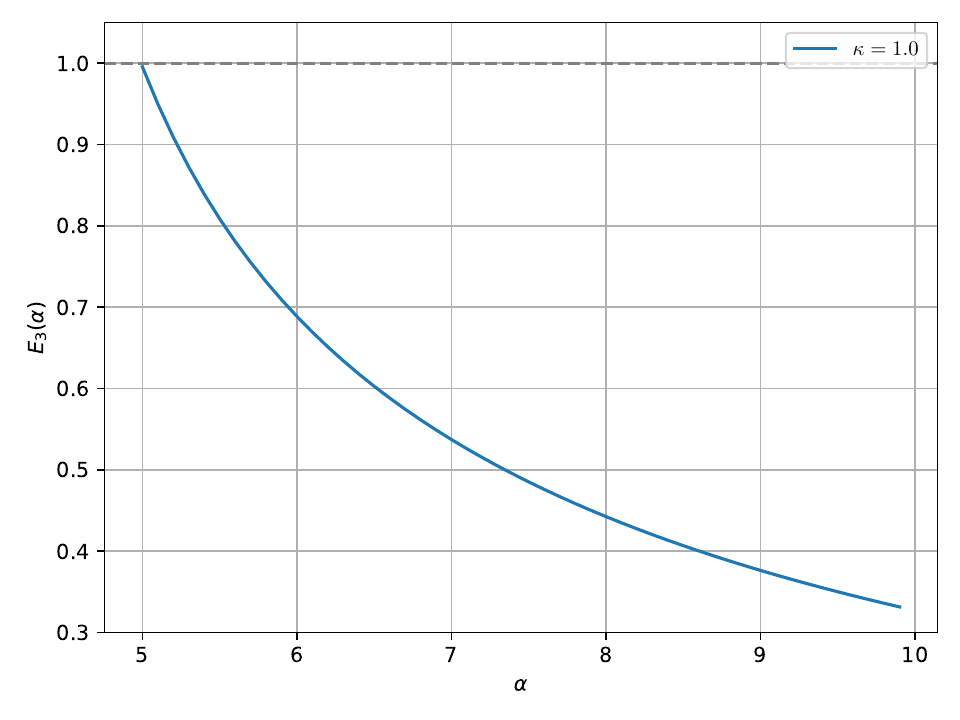} 
            \caption{
The stability criterion $E_{3}(\alpha)$ of Eq.~\eqref{eq:E3} as a function of $\alpha$ for the solution $(\nu^\star,s^\star)$ found by our numerical scheme. The failure of the equations to be solvable past the threshold is a result of the stability constraint $E_3(\alpha) \le 1$ being violated.
            }
            \label{fig:FP_constraint}
        \end{subfigure}
        \caption{Solving the optimality conditions of Definition~\ref{def:opt_FP_conds} for the Tukey loss. }
        \label{fig:FP_eqs}
    \end{figure}
%
%
\section{Numerical analysis}
\label{sec:Numerical}

In this section present results of numerical experiments carried out within
the Tukey regression setting of Section \ref{sec:tukey}.
Throughout, we fix $\btheta_0 \in\S^{d-1}$ (the unit sphere in dimension $d$), whence $r_{00}=1$ in Assumption~\ref{ass:theta_0}. The choice of $\btheta_0$ is irrelevant by rotation invariance and hence we can set,
without loss of generality, $\mu_{0}= \normal(0,1)$ (corresponding to $\btheta_0\sim\Unif(\S^{d-1})$). 
Whenever we carry out experiments that require running multiple runs, $\btheta_0$ is kept fixed across runs. 

We take the distribution $\P_w$ to be discrete as follows:
for some $A>0,$
\begin{equation*}
 w \in \{- 10 A, - A, +A, + 10A\} \quad\quad \textrm{with prob.}\quad\quad\{0.01, 0.49,0.49,0.01\}\quad\quad\textrm{respectively}.
\end{equation*}
The value $A$ will vary in our plots and will often be specified implicitly through  $\SNR:=\|\btheta_0\|_2^2/\Var(w)\propto  A^{-2}$.
Throughout we will take $\kappa =1$, which yields $\sfL_{\min}^{-1} = 1.25$.

    \begin{figure}[t] 
        \centering 
        \begin{subfigure}{0.49\textwidth} 
            \centering
            \includegraphics[width=\textwidth]{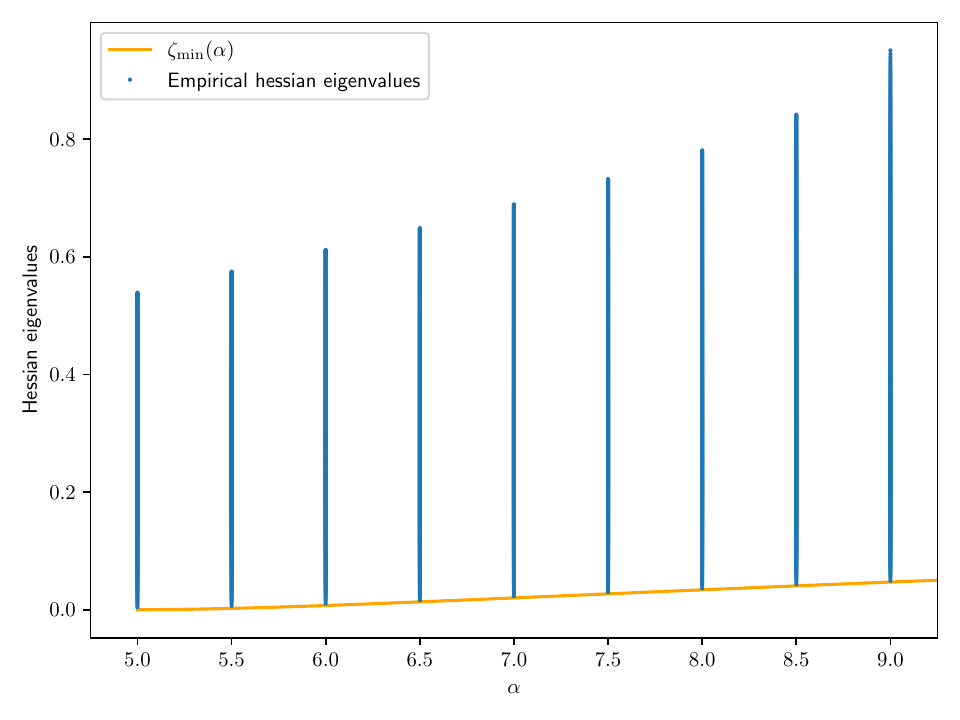} 
        \end{subfigure}%
        \hfill 
        \begin{subfigure}{0.49\textwidth} 
            \centering
            \includegraphics[width=\textwidth]{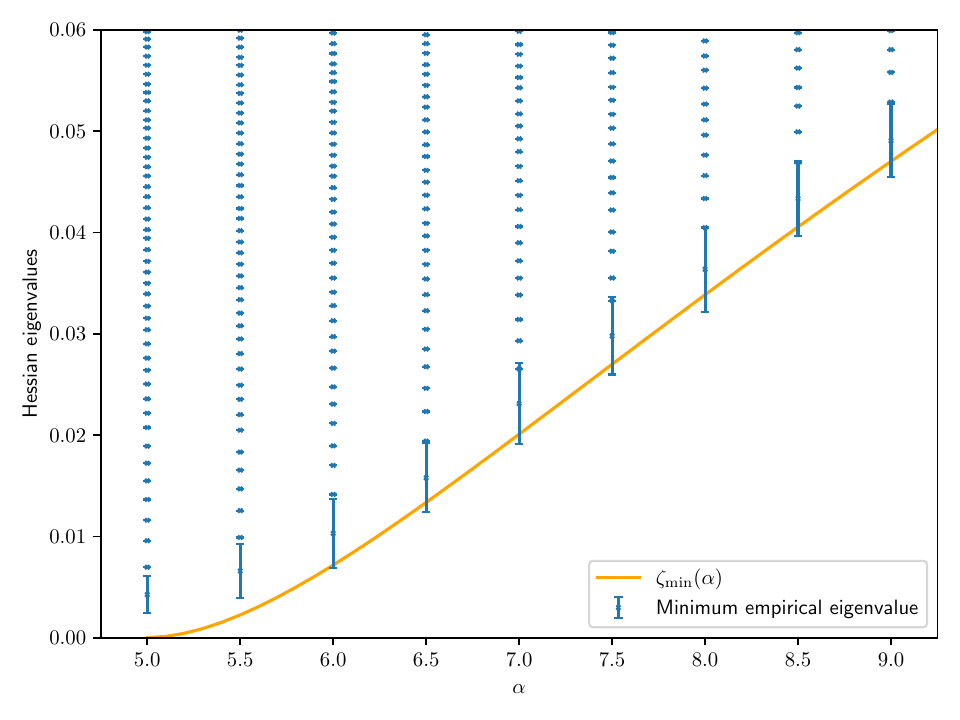} 
        \end{subfigure}
        \caption{Spectrum of the Hessian of the empirical risk $\nabla^2 \hR_n(\btheta)$ 
        for Tukey regression, compareed with theoretical prediction.
        The Hessian eigenvalues $\lambda^{(j)}_1\le \lambda^{(j)}_2\le 
        \cdots\le \lambda^{(j)}_d$ 
        are computed using $n=10000$, for each of $50$ trials:
        $j\in\{1,\dots,50\}$. For each $i\le d$, we plot the average and
         standard deviation of
        the $i$-th eigenvalue,  $\lambda^{(j)}_i$, over the $50$ trials.     
        Both plots are produced with the same data, with the right plot 
        zoomed to the left-edge of the support.  Here  $\SNR = \SNR_\star$ and
         $\kappa = 1.0.$  The continuous line reports the theoretical prediction
         for the lower edge of the asymptotic spectral distribution. 
        }
        \label{fig:FP_hessian}
    \end{figure}
 
\paragraph{Analysis of the fixed-point equations (Figures~\ref{fig:FP_eqs} and~\ref{fig:FP_hessian}).}
We begin by numerically solving the optimality conditions for the  Tukey regression problem. These are given by Eqs.~ \eqref{eq:ST_robust} to \eqref{eq:ST_robust_2} 
in Theorem~\ref{cor:robust_regression}, with $\ell(\,\cdot\,)=
\ell_{\tuk}(\,\cdot\,;\kappa)$. Throughout, we use the parameterization $\rho^2 := r - r_{00}$.

We search for a numerical solution by the following iterative scheme. We initialize to some value $\rho_0>0$, $s_0\in [0,\sfL_{\min}^{-1}),$
and iterate the following equations until convergence
\begin{align}
\rho_{t+1} &=  \left(\alpha s_t^2 \E[\ell'(v_t -w)^2] \right)^{1/2},\quad\quad
s_{t+1} = \left(\alpha \E\left[\frac{\ell''(v_{t} - w)}{1+s_t \ell''(v_{t} -w)}\right] \right)^{-1} \wedge \sfL_{\min}^{-1},\nonumber\\
v_t&:= \Prox_{\ell(\cdot - w)}(\rho_t G;s_t)\, ,\;\;\;  (G,w)\sim\normal(0,1)\otimes \P_w\, ,\label{eq:NumScheme}
\end{align}
for $t\in \{0,1,\dots,T_{\max}\}$ with $T_{\max} = 1000.$
It is not hard to show that the fixed points of this 
iteration, if any, are solutions of the equality conditions in Eqs.~\eqref{eq:ST_robust} and \eqref{eq:ST_robust_2} (see
Appendix~\ref{sec:numerical_details}).
 Finally, we implement the iteration of Eqs.~\eqref{eq:NumScheme} by
 approximating the expectations via numerical integration. 

If the  the iteration converges (within a preassigned tolerance) to a fixed point  $(\rho^\star,s^\star)$ 
we check the inequality constraint
\begin{equation}
\label{eq:E3}
    E_3(\alpha):=\alpha\E\left[\left(\frac{s^\star \ell''(v^\star-w)}{1 + s^\star \ell''(v^\star - w)}\right)^2\right] \le 1,\
\;\;\; v^\star:= \Prox_{\ell(\cdot - w)}(\rho^\star G;s^{\star})\, .
\end{equation}
If this condition is verified, we declare that the solution is \emph{stable.}

    \begin{figure}[t] 
        \centering 
        \begin{subfigure}[t]{0.32\textwidth} 
            \centering
            \includegraphics[width=\textwidth]{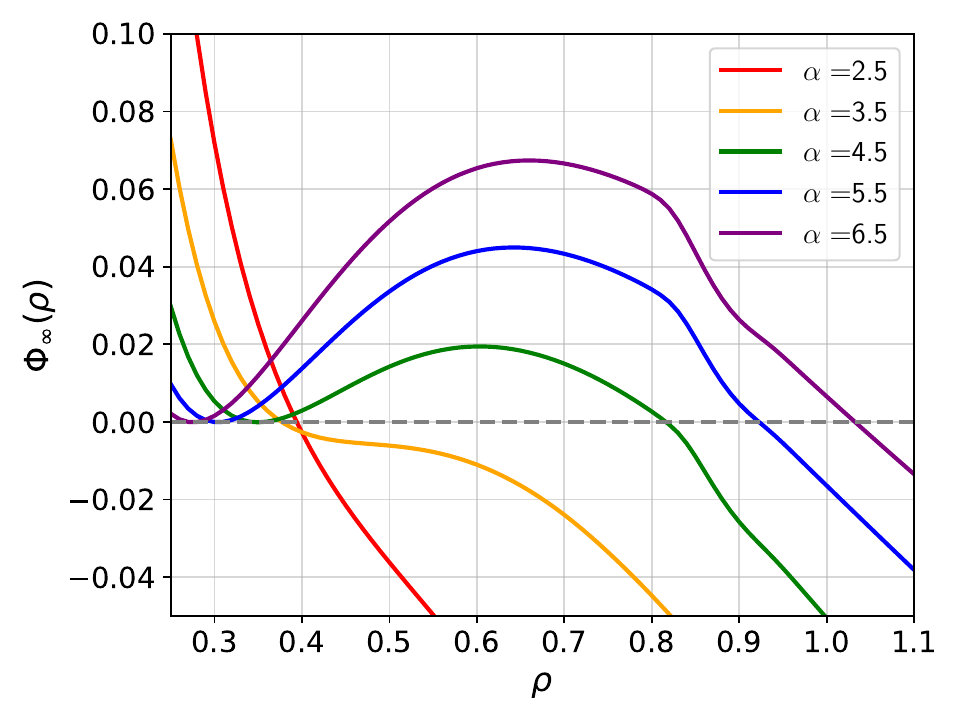} %
            \caption{}
            \label{fig:rate_func_1}
        \end{subfigure}%
        \hfill 
        \begin{subfigure}[t]{0.32\textwidth} 
            \centering
            \includegraphics[width=\textwidth]{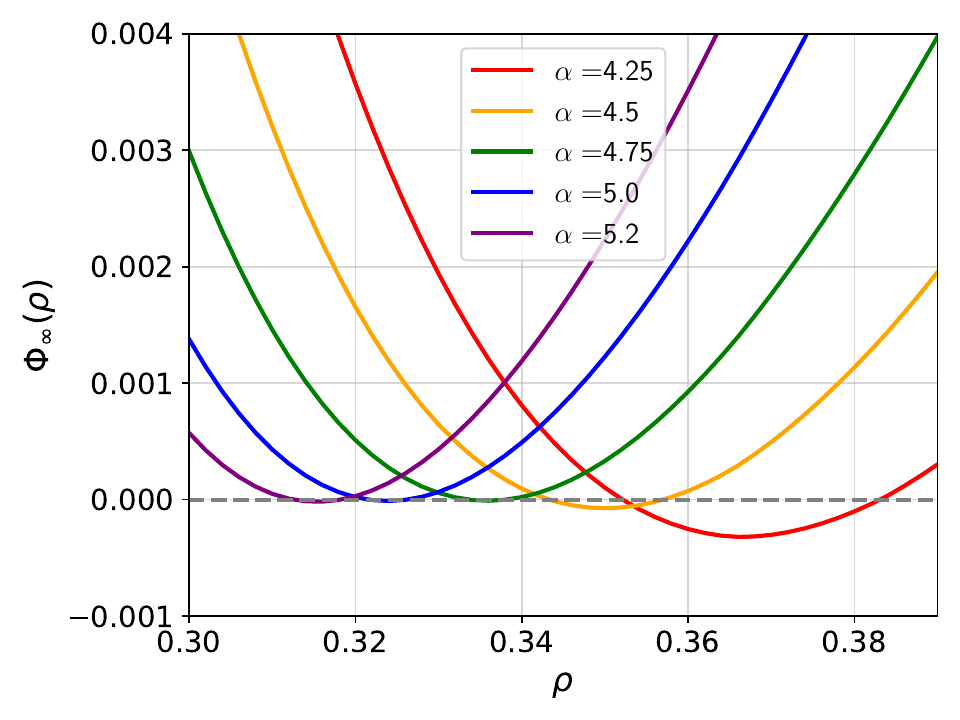} %
            \caption{}
            \label{fig:rate_func_2}
        \end{subfigure}
        \begin{subfigure}[t]{0.32\textwidth} 
            \centering
            \includegraphics[width=\textwidth]{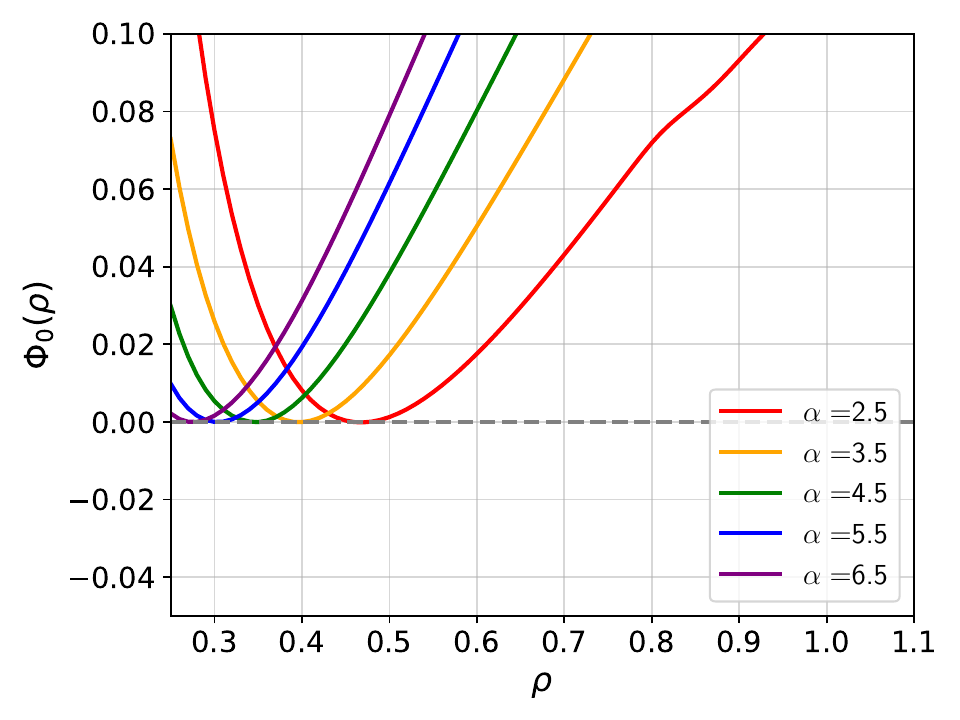} %
            \caption{}
            \label{fig:rate_func_3}
        \end{subfigure}
        
        \caption{
       Negative exponential growth rate of the number of local minimizers of
        the empirical risk $\hR_n(\btheta)$ for Tukey regression. 
       Left: $\Phi_{\infty}(\rho)$ of Eq.~\eqref{eq:phi_infty_def} vs. $\rho$ for various values of $\alpha$. 
       Center: zoom for values of $\alpha$  close to $\alpha =5$. 
       Right:  $\Phi_{0}(\rho)$ of Eq.~\eqref{eq:phi_0_def} vs. $\rho$ for several values 
       of $\alpha$. 
        }
            \label{fig:rate_function_plots}
    \end{figure}

Figure~\ref{fig:FP_eqs} shows the results of this numerical solution of the optimality equations.
In Figure~\ref{fig:FP_region}, we plot the region of the $(\alpha,\SNR)$-plane in which 
we find a stable solution of the equations using the iterative scheme. For any fixed $\SNR$,
a solution exists when $\alpha$ becomes larger than a threshold $\alpha_{\str}(\SNR)$.

Figure~\ref{fig:FP_constraint} traces $E_3(\alpha)$ of Eq.~\eqref{eq:E3} at the obtained solution $(s^\star,\rho^\star)$ as a function of $\alpha$.
Here, we choose $\SNR = \SNR_\star\approx 2.73$ corresponding to the dashed line in Figure~\ref{fig:FP_region}. 
This value of $\SNR$ is chosen so that we obtain $\alpha_{\str}(\SNR_{\star}) \approx 5.$
From this plot, we observe that $E_3(\alpha)<1$ for $\alpha>\alpha_{\str}(\SNR_{\star})$ and
$E_{3}(\alpha)\uparrow 1$  as $\alpha\downarrow\alpha_{\str}(\SNR_{\star})$ indicating that the 
threshold $\alpha_{\str}$ is determined as the point at which the inequality condition fails.

Note if $(s^\star,\rho^{\star})$ is a fixed point of Eq.~\eqref{eq:NumScheme} 
(hence it satisfies Eq.~\eqref{eq:ST_robust}), 
then $s_{\star}$ is the Stieltjes transform at $0$ of $\mu_{\star}(\nu^\star)$ (with $\nu_{\star}$ given by 
Eq.~\eqref{eq:NuoptRegr}).
Furthermore, the condition  $E_3(\alpha) \le 1$ (the inequality in Eq.~\eqref{eq:ST_robust}) 
implies that $\inf\supp(\mu_\star(\nu^\opt)) \ge 0$.

    \begin{figure}[t] 
        \centering 
        \begin{subfigure}{0.48\textwidth} 
            \centering
            \includegraphics[width=\textwidth]{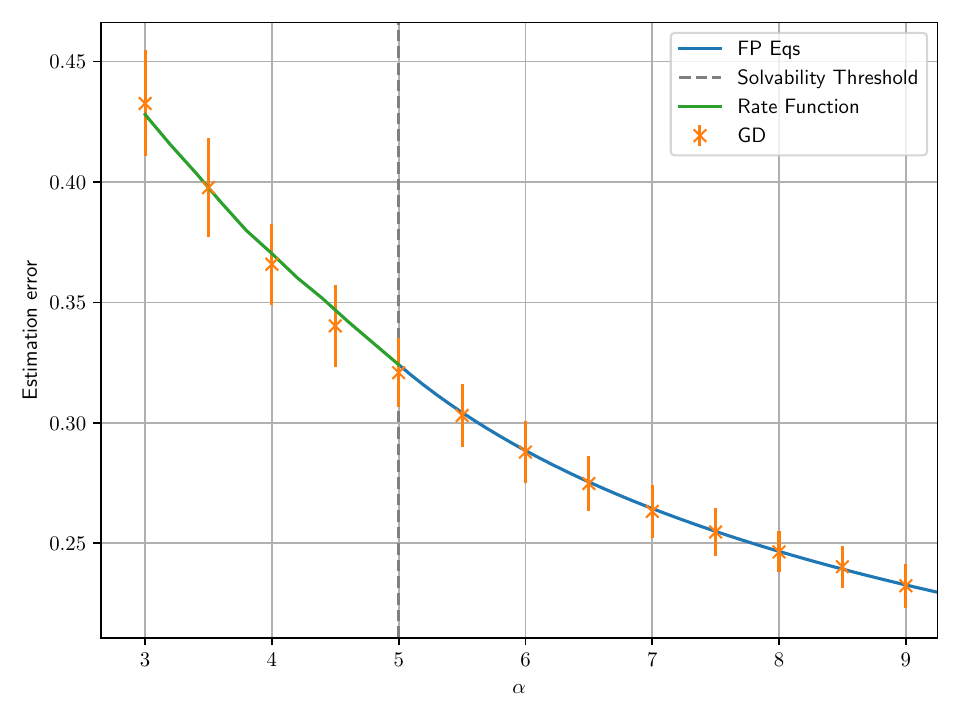} 
        \end{subfigure}%
        \hfill 
        \begin{subfigure}{0.48\textwidth} 
            \centering
            \includegraphics[width=\textwidth]{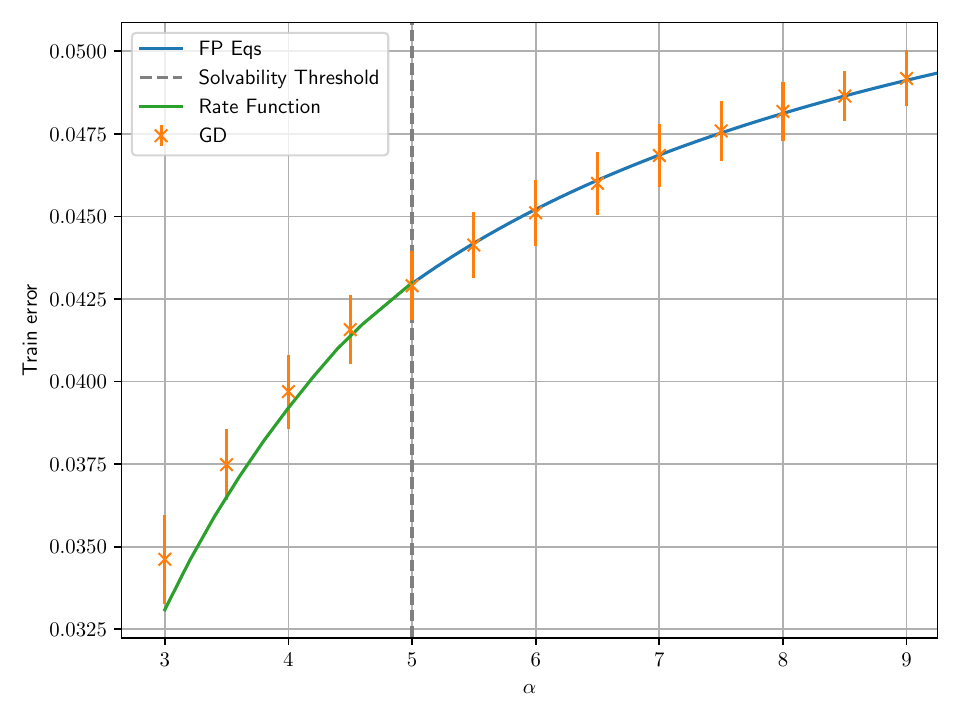} 
        \end{subfigure}
        \caption{Predictions for the estimation error (left) and train loss (right) at the global minimum, compared
        with empirical results with GD, for the same quantities,
        namely $\|\hat\btheta_{\GD}-\btheta_0\|_2$ and $\hat R_n(\hat\btheta_\GD)$. 
        For GD, we used  $d=500$ and an average of $100$ trials, with error 
        bars indicating the standard deviation.
       For $\alpha > \alpha_{\str}(\SNR_\star) = 5,$ the predictions are
        produced by solving the  equations (\textsf{FP Eqs}). The dashed lines
        correspond to the stability  threshold. 
        Theoretical predictions are proven to be asymptotically exact for
        $\alpha$ sufficiently large, but are only an heuristic approximation
        for $\alpha< \alpha_{\str}(\SNR_\star)$.}
        \label{fig:preds_full}
    \end{figure}

    \begin{figure}[t] 
        \centering 
        \begin{subfigure}[t]{0.245\textwidth} 
            \centering
            \includegraphics[width=\textwidth,height=1.18\textwidth]{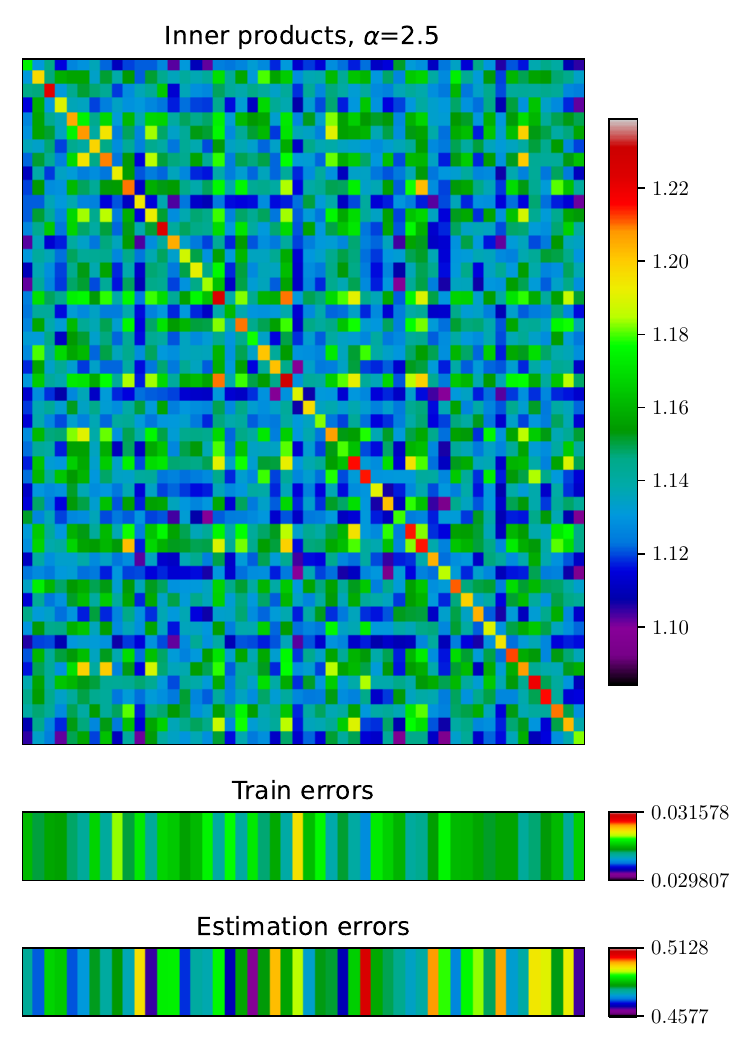} %
        \end{subfigure}%
        \hfill 
        \begin{subfigure}[t]{0.245\textwidth} 
            \centering
            \includegraphics[width=\textwidth,height=1.18\textwidth]{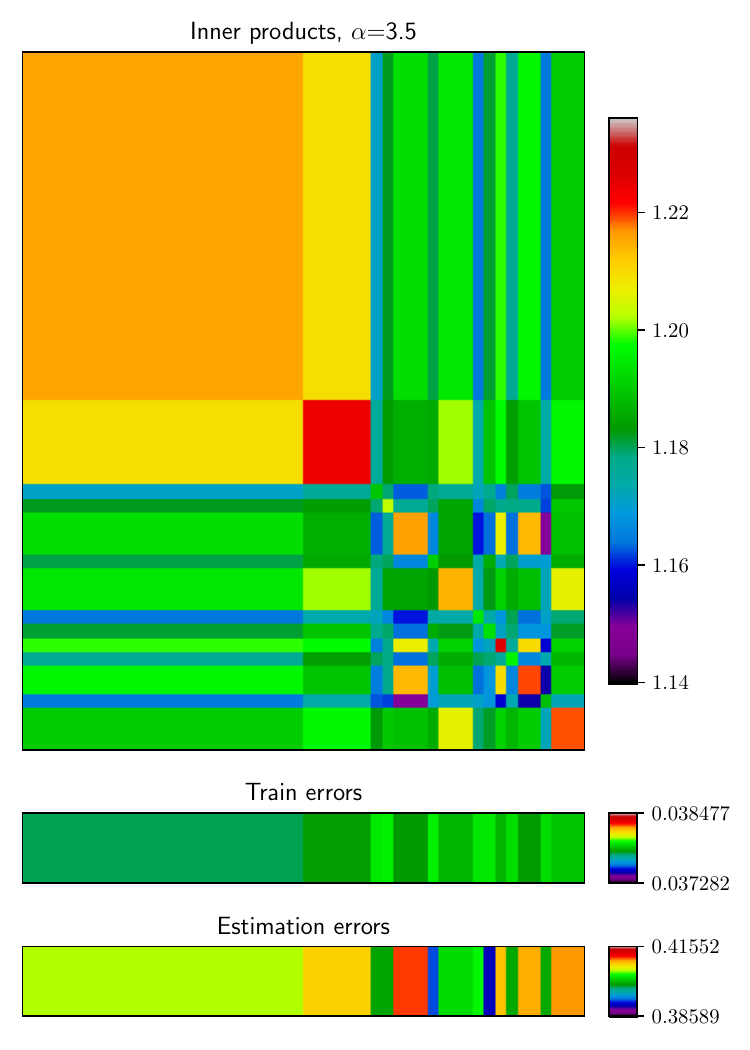} %
        \end{subfigure}%
        \hfill 
        \begin{subfigure}[t]{0.245\textwidth} 
            \centering
            \includegraphics[width=\textwidth,height=1.18\textwidth]{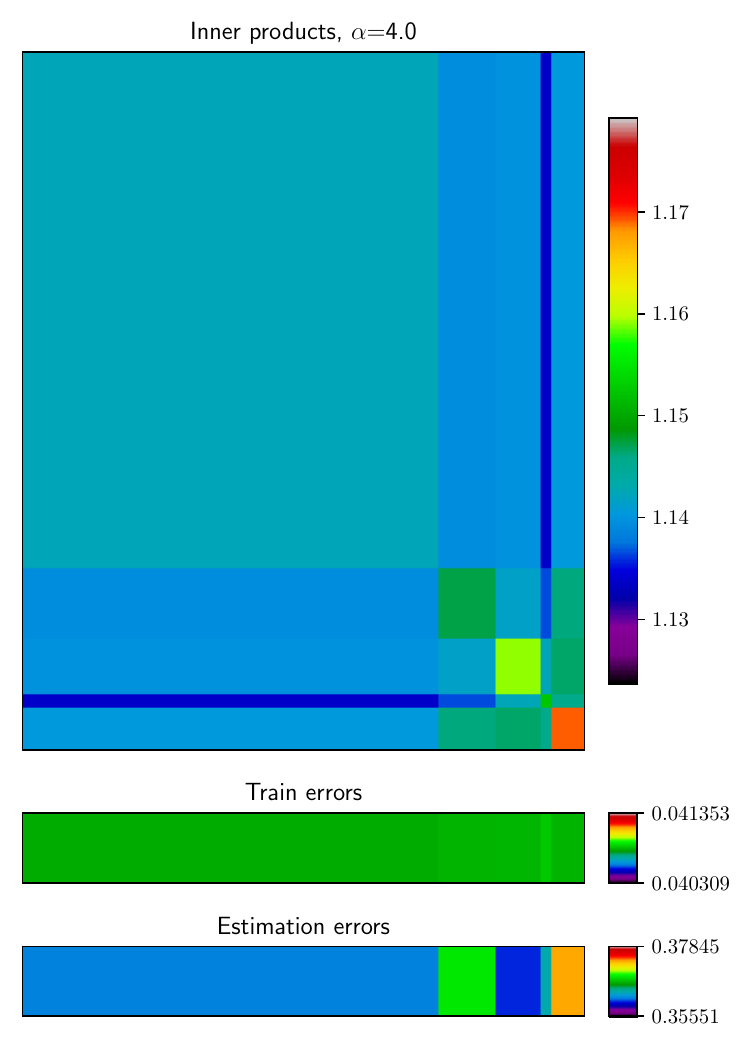} %
        \end{subfigure}%
        \hfill 
        \begin{subfigure}[t]{0.245\textwidth} 
            \centering
            \includegraphics[width=\textwidth,height=1.18\textwidth]{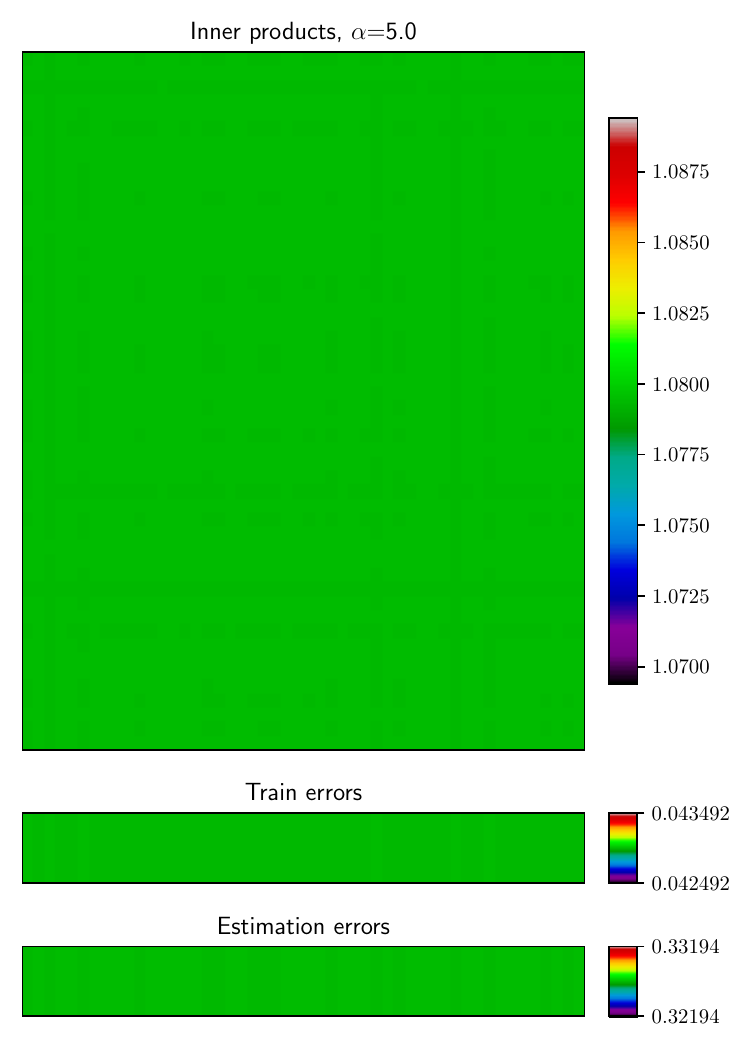} %
        \end{subfigure}%
        
        \caption{$M=50$ estimators $\hat\btheta^\up{1},\dots,\hat\btheta^\up{M}$ obtained from different random initialization of gradient descent on the same data $(\bX,\bw)$. The plots show the inner product  matrix $(\hat\btheta^{\up{i}\sT}\hat\btheta^{\up{j}})_{i,j \le M}$, along with the train error vector $(\hat R_n(\hat\btheta^\up{i}))_{i\in[M]}$ and the estimation error vector $(\|\hat \btheta^\up{i} - \btheta_0\|_2)_{i\in[M]}.$
        We take $d=1000$. The plots show one trial (i.e., one realization of ($\bX,\bw$)). 
        Note that the color scale is different for different values of $\alpha$.
        }
        \label{fig:heat_plot_transition}
    \end{figure}

In other words, the observation that $E_{3}(\alpha)\uparrow 1$ as 
$\alpha\downarrow \alpha_{\str}$ implies that the lower edge of the 
limiting spectral distribution of the Hessian 
vanishes as $\alpha\downarrow \alpha_{\str}$.
In order to see this, note that Lemma~\ref{lemma:G_to_s_star} below 
asserts that for a given $\nu$,
\begin{align}\label{eq:ZetaMin}
    \zeta_{\min}(\alpha;\nu)&:=\inf \supp(\mu_\star(\nu)) = -\inf_{s \in (0, \sfL_{\min}^{-1})} G_\ST(s;\alpha,\nu),
    \\
    G_\ST(s;\alpha,\nu) &:= \frac{1}{\alpha s} -  \E_\nu\left[\left(\frac{\ell''(v;v_0,w)}{1 + s \ell''(v;v_0,w)}\right)\right].
    \nonumber
\end{align}
(This statement is more general than the Tukey regression setting, hence the more general notation for $\ell$.)
Note that our theory at the moment does not rule out the possibility that 
the Hessian has $o(n)$ eigenvalues below $\zeta_{\min}(\alpha;\nu)$, although we do not expect this to be the case
for $\alpha>\alpha_{\str}$.

Figure~\ref{fig:FP_hessian} compares this prediction with numerical simulations.
We minimize the empirical risk via Gradient Descent (GD), compute the spectrum of the Hessian and 
display its support 
for  several values of $\alpha$ at $\SNR=\SNR_\star$ (the details of the GD scheme used are in Appendix~\ref{sec:numerical_details}).
We compare this  with $\zeta_{\min}(\alpha; \nu^\star)$ 
computed from Eq.~\eqref{eq:ZetaMin}, with 
$\nu^\star$ obtained by evaluating Eq.~\eqref{eq:NuoptRegr} the fixed point of iteration Eq.~\eqref{eq:NumScheme}. 
 We observe that ---as predicted--- the minimum eigenvalue of the Hessian becomes very close to 0 as 
 $\alpha\downarrow \alpha_{\str}$.

\paragraph{Rate function plots (Figure~\ref{fig:rate_function_plots}).}
Next, we show plots of the rate function for the expected number of local minima in local in 
Tukey regression, whose computation is enabled the finite dimensional formula in the second item of Theorem~\ref{thm:fin_dim_var_formula}.

Recall  the parameterization of the rate function $\Phi_{\tuk}(\rho;\iota)$ appearing in Corollary~\ref{prop:tukey}. Once again, in what follows, we choose the $\SNR=\SNR_\star$ as defined previously.
Figures~\ref{fig:rate_func_1} and~\ref{fig:rate_func_2} display the function 
\begin{equation}
\label{eq:phi_infty_def}
    \Phi_\infty(\rho) := 
    \inf_{\iota \in (0, \kappa^2/ 6)}\Phi_{\tuk}(\rho;\iota_\infty)
\end{equation}
resulting from solving the formula given in Corollary~\ref{prop:tukey} numerically. This is shown for several values of $\alpha$.
Recall that $\kappa^2/6$ is the maximal value of the empirical risk in this setting, so $\Phi_\infty$ corresponds to imposing no constraint on the value of empirical risk at the minimizer.
In Figure~\ref{fig:rate_func_1}, we see that for large values of 
$\alpha$, the rate function is non-negative for $\rho < \sfA_R$ 
(for some constant $\sfA_R$) but becomes negative for larger values of $\rho$.
This is a consequence of the fact that the the Tukey loss is convex near
the origin but becomes nonconvex and flat far from the origin.

The fact that $\Phi_{\infty}(\rho)$ is negative for large 
values of $\rho$ not inconsistent with Theorem~\ref{cor:robust_regression},
which only guarantees  non-negativity for $\alpha>\alpha_\star(\sfA_R).$

Figure~\ref{fig:rate_func_2} zooms in the interval of $\rho$ where
$\Phi \ge 0$ and $\Phi=0$  at a single point, while being locally convex around this point.
The threshold for this local convexity of the rate function  seems to occur in the vicinity of $\alpha =5.0$,
 in correspondence to the stability threshold discussed earlier (dashed-line in Figure~\ref{fig:FP_eqs}).

Let
\begin{equation}
\label{eq:iota_0}
    \iota_0(\alpha) := \sup\Big\{\iota \in [0, \kappa^2/6]:  
    \inf_{\rho \in (0,\infty)} \Phi_{\tuk}(\rho;\iota) >0 \Big\}.
\end{equation}
Informally, for a given $\alpha$, $\iota_0(\alpha)$ is the largest 
value of the training loss at which $\Phi_{\tuk}$ is non-negative for all $\rho$.
An immediate consequence of Corollary~\ref{prop:tukey} is that
the, for any $\eps>0$ there exists $c(\eps)>0$ such that, 
for all $n$ large enough,
\begin{align}
\P\Big(\min_{\btheta}\hR_n(\btheta) \le \iota_0(\alpha) -\eps \Big)\le 
e^{-c(\eps)n}\, .
\end{align}
Figure~\ref{fig:rate_func_3} shows the plot of 
\begin{equation}
\label{eq:phi_0_def}
    \Phi_0(\rho) := \inf_{\iota \in (0, \iota_0(\alpha))} \Phi_{\tuk}(\rho ;\iota)
\end{equation}
as a function of $\rho$, for different values of $\alpha$.
This constraint forces $\Phi_0(\rho)$ to be non-negative for larger
 values of $\rho$ (compare to Figure~\ref{fig:rate_func_1}).
Notice that, above the trivialization threshold $\alpha_{\str}$,
the location of the minimizing values of $\rho$ does not change when 
imposing the constraint on the training loss.  

\paragraph{Qualitative picture of the landscape.}
Let us pause for a moment to discuss the implications of the above findings.
For $\alpha$ sufficiently large, a separate argument (see Appendix~\ref{sec:numerical_details})
can be used to prove 
that there exists at least one  local minimizer
$\hbtheta$ such that $\|\hbtheta-\btheta_0\|\le \sfA_R$,
 and hence we can identify $(\rho^\star, \iota^\star)$ of Corollary \ref{prop:tukey} as the 
 asymptotic values of the estimation error
  and training loss of any such local minimizer (in fact we expect such 
  minimizer to be unique, but we do not formally prove it).
  Fixing a small $\Delta>0$, the  rate function 
  $\inf_{\iota \in (0, \iota_{\star} + \Delta] }\Phi_{\tuk}(\rho; \iota)$ is positive for all $\rho$
  except at the point $\rho^\star$
  (this corresponds to considering only local minima with 
  $\hR_n(\hbtheta)\le \iota_{\star}+\Delta$).
 This implies that
  the local minimizers  with loss/error $(\rho^\star, \iota^\star)$ are in fact 
  global minimizers,
  no other local minimizers are present in a large ball of radius $\sfA_R$,
  and outside this ball there are possibly other local minimizers albeit 
  with significantly larger train loss.

\paragraph{Predictions on estimation error and train loss 
(Figure~\ref{fig:preds_full}).}
In Figure \ref{fig:preds_full}, we compare our theoretical predictions for
 estimation error and train loss $(\rho^{\star}, \iota^\star)$ with the 
results of numerical experiments using GD.  We use $\SNR=\SNR_\star$ and vary 
$\alpha$ both over $\alpha > \alpha_{\str}(\SNR_\star)$ and 
over $\alpha < \alpha_{\str}(\SNR_\star)$
(recall that $(\alpha_{\str}(\SNR_{\star})\approx 5$).
We compare our the predictions with the 
result of running gradient descent.
Letting $\hbtheta_{\GD}$ denote the GD output, estimation error and
train loss are given, respectively, by $\|\hbtheta_{\GD}-\btheta_0\|_2$ 
and $\hR_n(\hbtheta_{\GD})$. These are labeled as \textsf{GD} in the figure.

For $\alpha > \alpha_{\str}$, predictions for the estimation and training 
errors are derived from Theorem \ref{cor:robust_regression}
using $(s^{\star},\rho^{\star})$ computed as fixed points of the 
recursion \eqref{eq:NumScheme}.
The corresponding theoretical predictions are
$\rho^{\star}$ and $\E_{\nu^\star}[\ell(v^{\star})]$ for the 
estimation error and training error respectively. These are labeled as 
\textsf{FP Eqs} in the figure.

For $\alpha < \alpha_{\str}$, the prediction for the estimation error 
are computed as
\begin{equation}
\label{eq:r_0_iota_0}
    \rho^{\star}(\alpha) := \argmin_{\rho > 0} \Phi_{0}(\rho),
\end{equation}
with $\Phi_0$ defined in Eq.~\eqref{eq:phi_0_def},
and those for the training error are given by $\iota_0(\alpha)$ of 
Eq.~\eqref{eq:iota_0}. These are labeled as \textsf{Rate Function} 
in the figure. 

We emphasize that the predictions for $\alpha < \alpha_{\str}$
are not expect to be asymptotically exact, but only a good approximations:
the expected number of local minima is exponentially large (even after constraining $\|\btheta-\btheta_0\|\le \sfA_R$)
and dominated by rare realizations of $\bX$ in this case. 
In contrast, results for $\alpha > \alpha_{\str}$ are likely to be exact (in the proportional asymptotics),
and indeed we prove they are  for all $\alpha$
large enough.

\noindent
\paragraph{Gradient descent stability experiments (Figures~\ref{fig:PhaseDiagram} and~\ref{fig:heat_plot_transition})}
Finally, to gain some qualitative understanding of the behavior of landscape above and below the threshold $\alpha_{\str},$ we perform the following experiment.
For a given $(\bX,\bw,\btheta_0)$, we run gradient descent from $M$ different random initializations on the unit sphere $\S^{d-1}(1)$ to produce estimations $\hat\btheta^\up{1},\dots,\hat\btheta^\up{M}.$ (In these experiments $M=30$)

In Figure~\ref{fig:heat_plot_transition}, we plot, for values of $\alpha\in\{2.5,3.5,4.0,5.0\}$, the $M\times M$ matrix of inner products $(\hat\btheta^{\up{i}\sT}\hat\btheta^\up{j})_{i,j\in[M]}$ along with the vectors $(\hat R_n(\hat\btheta^\up{i}))_{i\in[M]}$ and $(\|\hat\btheta^\up{i} - \btheta_0\|_2)_{i\in[M]}.$ We see that as $\alpha$ increases towards $\alpha_{\str}(\SNR_\star),$ the rank of the matrix decreases, and the minimizers starting from different initializations coincide.

In Figure~\ref{fig:PhaseDiagram} of the introduction, we
compare the threshold at which the gradient descent solutions from different initializations begin to coincide, with the threshold $\alpha_{\str}(\SNR)$.
Namely, in Figure~\ref{fig:PhaseDiagram_1}, we plot $\max_{i,j \in[M]}\|\hat\btheta^\up{i} -\hat\btheta^\up{j}\|_2$  in the $(\alpha,\SNR)$-plane, averaged over $N$ different realizations of $(\bX,\bw).$ 
In Figure~\ref{fig:PhaseDiagram_2}, we plot the number of clusters that the different solutions $\hat\btheta^\up{i}$ form, computed by thresholding the normalized distance by $\eps = 10^{-3}$. The result shown is the average of the number of clusters over $N$ different realizations of $(\bX,\bw).$
Finally, overlayed on both figures is the curve of $\alpha = \alpha_{\str}(\SNR)$ from Figure~\ref{fig:FP_region}.

\section{Proofs}

We will now prove the main results of the paper beginning with Theorem~\ref{thm:fin_dim_var_formula}. We will leave some of the more technical details to the appendix.
\subsection{Proof of the variational upper bound: Theorem~\ref{thm:fin_dim_var_formula}}

\subsubsection{The general lower bound on the rate function}
We state the result of~\cite{asgari2025local} (adapted to our setting) which  we use as a starting point for our proofs. 

\begin{theorem}[Theorem 1 of~\cite{asgari2025local}]
\label{thm:general}
In the setting of Theorem~\ref{thm:fin_dim_var_formula_k=1}, with the same definitions made there, we have for any $\cuA,\cuB$ open in the $W_2$ topology and any $\sfA_R,\sfa_L, c_0>0,$
\begin{align}
 \lim_{\delta\to0}\lim_{n\to\infty}\frac{1}{n}\log\E\left[Z_n(\cuA,\cuB,\sfA_R,\sfa_L) \one_{\Omega_\delta \cap\tilde\Omega_0 \cap \tilde\Omega_1(c_0)}\right]
 &\le
 -\hspace{-5mm}
\inf_{\substack{\mu \in \cuA\cap\cuV_{\mupart}\cap\tilde\cuG_\mupart 
} }
\left\{
\Phi_\mupart(\mu) +
\inf_{
\nu \in\cuB \cap \cuV_\nupart(\bR(\mu))\cap \tilde\cuG_\nupart
}
  \tilde\Phi_{\nupart}(\nu,\bR(\mu)) 
\right\}
\end{align} 
where
\begin{equation}
    \tilde\Omega_0 := \{ \|\bX\bSigma^{-1/2}\|_\op \le 2\sqrt{n}( 1 + \alpha^{-1/2}) \},
\end{equation}
\begin{equation}
    \tilde\Omega_1(c_0) :=
    \{\forall \btheta\in\cG_n\;\textrm{with}\; \grad\hat R_n(\btheta) = \bzero,\; \sigma_{\min}([\btheta,\btheta_0]) > c_0\},
\end{equation}
\begin{equation}
    \tilde\cuG_\mupart := \cuG_{\mupart} \cap \{\mu : \bR(\mu) \succ c_0\},\quad\quad
    \tilde\cuG_\nupart := \cuG_{\nupart} \cap\{\nu : \E_\nu[(\bv,\bv_0)(\bv,\bv_0)^\sT]\prec \sfA_R^2(1 + \alpha^{-1/2})^2\},
\end{equation}
$\Phi_{\mupart}$ was defined in Eq.~\eqref{eq:phi_A_phi_B_defs}, and 
   \begin{align}
   \label{eq:tilde_phi_B}
    &\tilde\Phi_{\nupart}(\nu,\bR)  := 
    -\frac{k}{\alpha}\int  \log(\zeta + \lambda) \mu_\star(\nu)(\de\zeta)
    +  \frac1{2\alpha}\log \left(
    \frac{
    \E_\nu[\ell'(v;v_0,w)^2 ]}{
\alpha\, e \,\schur(\bR,r_{00})
    }\right) 
    +\KL(\nu_{\cdot|w} \| \cN(\bzero ,\bR ))
   \end{align}

\end{theorem}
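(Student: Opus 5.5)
Since this is Theorem~1 of \cite{asgari2025local} specialized to a single-index model ($k=1$), the plan is to rerun its Kac--Rice argument in our notation rather than prove anything new. We start from the Kac--Rice formula for the zeros of $\btheta\mapsto\nabla\hR_n(\btheta)$ restricted to $\cG_n(\sfA_R,\sfa_L)$:
\begin{equation*}
\E[Z_n]\le\int_{\R^d}\E\Big[|\det\nabla^2\hR_n(\btheta)|\,\one\{\nabla^2\hR_n\succeq\bzero,\ (\hmu,\hnu)\in\cuA\times\cuB,\ \btheta\in\cG_n\}\,\Big|\,\nabla\hR_n(\btheta)=\bzero\Big]\,p_{\nabla\hR_n(\btheta)}(\bzero)\,\de\btheta .
\end{equation*}
This formula is not directly usable, because $\nabla\hR_n(\btheta)=\bX^\sT\ell'(\bX\btheta;\bX\btheta_0,\bw)/n+\lambda\btheta$ is not Gaussian.

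To handle the non-Gaussianity, we enlarge the variables to $(\btheta,\bv,\bv_0)$ with the constraints $\bv=\bX\btheta$ and $\bv_0=\bX\btheta_0$. We then condition on $\bX[\btheta,\btheta_0]$. By Gaussian conditioning, $\bX$ restricted to the orthogonal complement of $\mathrm{span}(\bSigma^{1/2}\btheta,\bSigma^{1/2}\btheta_0)$ is independent of $(\bv,\bv_0)$. Given these, the gradient projected onto that complement is a centered Gaussian vector with covariance $\|\ell'\|_2^2/n^2$ times a projector. The two directions $\btheta,\btheta_0$ are handled through the map $\bg$ of \eqref{eq:def_G}: the co-area/Jacobian factor $\sigma_{\min}(\bJ\bg)$ enters here, and on $\cG_n$ it costs only $e^{\overline{o}(n)}$. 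Two computations then follow.
\begin{itemize}
\item The Gaussian density at zero, in dimension $d-2$, contributes the term $\frac{1}{2\alpha}\log\big(\E_\nu[\ell'^2]/(\alpha e\,\schur(\bR,r_{00}))\big)$. The Schur complement appears after rescaling the volume element in $\btheta$ by the norm of the component orthogonal to $\btheta_0$.
\item The Hessian $\bX^\sT\mathrm{diag}(\ell'')\bX/n+\lambda\bI$ is, after conditioning, a generalized Wishart matrix plus a rank-$O(1)$ perturbation. Its log-determinant equals $\frac{d}{1}\int\log(\zeta+\lambda)\,\mu_\star(\hnu)(\de\zeta)+o(n)$, which gives the log-potential term in $\tilde\Phi_\nupart$. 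The constraint $\nabla^2\hR_n\succeq\bzero$ forces $\mu_\star(\nu)((-\infty,-\lambda))=0$, which is why that condition appears in $\cuV_\nupart$.
\end{itemize}

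Next we integrate over $(\btheta,\bv,\bv_0)$. The Lebesgue volume in $\btheta$ (at fixed $\btheta_0$), combined with the Gaussian density of $\bv$ given $\bv_0$, reduces by a Sanov/Laplace-type argument to exponential rates in the empirical measures. The density of $\bv$ against $\cN(\bzero,\bR)$ produces $\KL(\nu_{\cdot|w}\|\cN(\bzero,\bR))$. The volume of $\btheta$ with prescribed $\hmu(\btheta)$, relative to the Gaussian reference, produces $\frac1\alpha\KL(\mu_{\cdot|t_0}\|\cN(r_{10}r_{00}^{-1}t_0,\schur(\bR,r_{00})))$. The marginal constraints $\mu_{(t_0)}=\mu_0$ and $\nu_{(w)}=\P_w$ are enforced on $\Omega_\delta$, with $\delta\to0$ taken last. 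The identity $\bg=\bzero$ yields the linear constraint defining $\cuV_\nupart(\bR)$. Varadhan's lemma, applied over the open sets $\cuA,\cuB$, converts the resulting integral into the stated infimum.

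The event $\tilde\Omega_0$ is used to bound $\bv$ in terms of $\btheta$, which gives the second-moment restriction in $\tilde\cuG_\nupart$. The event $\tilde\Omega_1(c_0)$ keeps $[\btheta,\btheta_0]$ well conditioned, so that the Gaussian conditioning is non-degenerate and $\bR(\mu)\succ c_0$.

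The main obstacle is the determinant step. We need an exponential-scale, uniform-in-$(\btheta,\bv)$ upper bound on $\E[|\det\nabla^2\hR_n|\,\one\{\nabla^2\hR_n\succeq\bzero\}\mid\cdot]$ by $\exp\{d\int\log(\zeta+\lambda)\,\de\mu_\star(\nu)+o(n)\}$. This is delicate because $\log$ is unbounded at both ends and $\hnu$ is only close to $\nu$ in $W_2$. Our first approach would be to truncate the log-potential: bound $\log(\zeta+\lambda)$ above by $\log(\zeta+\lambda+\eps)$, combine this with the operator norm bound on $\tilde\Omega_0$, and then apply concentration of linear spectral statistics with a bounded Lipschitz test function, followed by $\eps\to0$. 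Continuity of $\nu\mapsto\mu_\star(\nu)$ in $W_2$, which follows from the fixed-point equation \eqref{eq:fp_eq}, would then give lower semicontinuity of the rate, which is needed for the Laplace step.
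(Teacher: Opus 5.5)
Your outline is correct at the level of a sketch and follows the same route the paper relies on. The paper does not reprove this statement but imports it (with $k=1$) as Theorem~1 of \cite{asgari2025local}, whose Kac--Rice argument is what you describe: conditioning on $\bX[\btheta,\btheta_0]$, treating the two in-span gradient directions through $\bg$ with the $\sigma_{\min}(\bJ\bg)$ factor controlled on $\cG_n$, the generalized Marchenko--Pastur log-determinant bound, and entropy/Sanov terms for $(\hmu,\hnu)$ on $\Omega_\delta\cap\tilde\Omega_0\cap\tilde\Omega_1(c_0)$.
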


By comparing the statement of this theorem to the statement of Item~\textit{1} of Theorem~\ref{thm:fin_dim_var_formula_k=1}, we see that the main difference is the term involving the logarithmic potential. In the convex case,~\cite{asgari2025local} proves a variational principle for this logarithmic potential term which we recall now.
It will be useful to first define the following quantities:
recalling $L_0(\nu)$ defined in the statement of the theorem, for $z\in \R\setminus\supp(\mu_\star(\nu))$ and $s\in(0,L_0^{-1}(\nu))$, let 
\begin{equation}
    K_\ST(s;z) := K_\ST(s;z,\nu) := -\alpha z s + \alpha \E_\nu\left[\log(1 + s  \ell''(v,v_0,w)) \right] - \log s - (\log \alpha + 1).
\end{equation}
The following lemma gives the variational formula for the convex setting.

\begin{lemma}[Lemma~7 of ~\cite{asgari2025local}]
\label{lemma:log_pot_convex}
Fix $\nu\in\cuP(\R^3)$ and assume that $\ell$ is convex on the support of $\nu$, i.e., $L_0(\nu) = 0$.
Then 
\begin{equation}
    \int \log(\zeta + \lambda) \mu_\star(\nu)(\de \zeta) \le \inf_{s > 0}
    K_\ST(s;-\lambda, \nu).
\end{equation}
\end{lemma}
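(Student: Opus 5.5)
The approach is to identify $K_\ST(\,\cdot\,;z,\nu)$ as a family of tangent lines to the (concave) logarithmic potential of $\mu_\star(\nu)$, and then use concavity. Throughout, $\nu$ is fixed with $L_0(\nu)=0$, so $\ell''\ge 0$ on $\supp(\nu)$ and $\mu_\star(\nu)$ is supported on $[0,\infty)$. Set
\begin{equation*}
F(z):=\int\log(\zeta-z)\,\mu_\star(\nu)(\de\zeta),\qquad z<\zeta_{\min}:=\inf\supp(\mu_\star(\nu)).
\end{equation*}
Then $F$ is concave on $(-\infty,\zeta_{\min})$ and $F'(z)=-s_\star(\nu;z)$. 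Here $s_\star(\nu;z)$ is the real, positive extension of the Stieltjes transform below the support, and it is increasing in $z$. The goal is $F(-\lambda)\le K_\ST(s;-\lambda,\nu)$ for every $s>0$, up to the normalization fixed below.

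\textbf{Step 1 (stationarity in $s$).} Differentiating gives
\begin{equation*}
\partial_sK_\ST(s;z)=\alpha\big(g(s)-z\big),\qquad g(s):=\E_\nu\Big[\frac{\ell''}{1+s\ell''}\Big]-\frac1{\alpha s}.
\end{equation*}
So $\partial_s K_\ST=0$ is exactly the fixed-point equation~\eqref{eq:fp_eq} at real $z$. Hence $s=s_\star(z)$ is a critical point of $K_\ST(\cdot;z)$. By the envelope theorem,
\begin{equation*}
\frac{\de}{\de z}K_\ST(s_\star(z);z)=\partial_zK_\ST=-\alpha s_\star(z),
\end{equation*}
which is $\alpha F'(z)$ up to the paper's normalization of $\mu_\star$ (the $1/\alpha$ relating $d$ and $n$). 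This gives $K_\ST(s_\star(z);z)=F(z)+C$ on the physical branch. I will fix $C$ by letting $z\to-\infty$, where $s_\star(z)=-1/z+O(z^{-2})$ and both sides can be expanded explicitly. The constants $-(\log\alpha+1)$ in $K_\ST$ are designed to make $C=0$, and I would verify this against the normalization in Eq.~\eqref{eq:tilde_phi_B}.

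\textbf{Step 2 (tangent-line argument).} For fixed $s$, the map $z\mapsto K_\ST(s;z)$ is affine with slope $-\alpha s$. Let $s=s_\star(z_s)$ with $z_s<\zeta_{\min}$. By Step 1, $K_\ST(s;\cdot)$ agrees with the (suitably normalized) $F$ at $z_s$ and has the same slope there, so it is the tangent line to the concave $F$ at $z_s$. Concavity then gives $F(z)\le K_\ST(s;z)$ for all $z<\zeta_{\min}$, in particular at $z=-\lambda$. This proves the bound for every $s$ in the range $(0,s_{\rm edge})$ of $s_\star$, where $s_{\rm edge}:=\lim_{z\uparrow\zeta_{\min}}s_\star(z)$.

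\textbf{Step 3 (remaining $s$; the main obstacle).} It remains to treat $s\ge s_{\rm edge}$, which lies off the physical branch. There I would show that $K_\ST(s;-\lambda)\ge K_\ST(s_{\rm edge};-\lambda)$, and then pass to the limit in Step 2 as $s\uparrow s_{\rm edge}$.
\begin{itemize}
\item If $s_{\rm edge}=\infty$, there is nothing to prove.
\item Otherwise, by Lemma~\ref{lemma:G_to_s_star} one has $\zeta_{\min}=\sup_s g(s)$, and $s_{\rm edge}$ is the maximizer of $g$.
\end{itemize}
In the convex case, differentiating $g$ gives $g'(s)=\big(1-\alpha\E[(s\ell''/(1+s\ell''))^2]\big)/(\alpha s^2)$. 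The bracket is decreasing in $s$ when $\ell''\ge0$, so $g$ is unimodal. Thus $g$ is decreasing on $(s_{\rm edge},\infty)$ with $\lim_{s\to\infty}g(s)=\E[\ell''\one_{\ell''=0}\cdot]=0\ge-\lambda$ (the limit is $0$ since the first term vanishes). Hence $g(s)\ge 0\ge -\lambda=z$ on that interval, which gives $\partial_sK_\ST\ge0$ there, and the claim follows.

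Checking this monotonicity and the limit of $g$ carefully is where I expect the difficulty. The case $\lambda=0$, where $z$ touches the edge, needs a separate continuity argument in $z$.
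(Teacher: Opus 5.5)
Your proof is correct. It takes a partly different route from anything in the paper, which does not prove this statement at all: it imports it from \cite{asgari2025local}. The nearest in-paper argument is the proof of the non-convex analogue, Lemma~\ref{lemma:variational_form_logdet}.

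Your Step~1 is the same as Item~1 there: the envelope theorem in $z$, plus matching as $z\to-\infty$. You then diverge. The paper stays in the $s$ variable. It uses $\partial_sK_\ST=-\alpha(G_\ST(s)+z)$ and the shape of $G_\ST$ to single out $s_\star(z)$ as the unique critical point with $\partial_s^2K_\ST\ge0$. You instead pass to the $z$ variable and read $K_\ST(s;\cdot)$ as the tangent line at $z_s$ to the concave log potential. This gives the bound on the whole physical branch $(0,s_{\mathrm{edge}})$ with no sign analysis, and explains why every admissible $s$ yields an upper bound.

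Past the edge you must fall back on the sign analysis, and this is exactly where convexity enters. There $g=-G_\ST$ decreases to $0\ge-\lambda$, so $K_\ST$ increases. In the non-convex case $g$ can fall below $-\lambda$ as $s\to L_0^{-1}$, which is why Lemma~\ref{lemma:variational_form_logdet} restricts the minimization to the stability set.

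Your unimodality computation alone already proves the lemma, with equality when $-\lambda<\zeta_{\min}$. Indeed, $\partial_sK_\ST(s;-\lambda)=\alpha(g(s)+\lambda)$ is negative before $s_\star(-\lambda)$ and nonnegative after it. So Step~2 is conceptually nice but dispensable.

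Three small fixes:
\begin{itemize}
\item With the paper's normalization $s_\star(z)=\alpha^{-1}\int(\zeta-z)^{-1}\mu_\star(\nu)(\de\zeta)$, the $z$-derivative of the log potential is exactly $-\alpha s_\star(z)$, with no stray factor of $\alpha$. Also $s_\star(z)=-1/(\alpha z)+O(z^{-2})$, not $-1/z$, and this is what makes the constant $C$ vanish.
\item In Step~3 you do not need Lemma~\ref{lemma:G_to_s_star}, which is stated for $L_0>0$. The facts $g(s_{\mathrm{edge}})=\zeta_{\min}\ge0$ and unimodality already give $g\ge0$ on $[s_{\mathrm{edge}},\infty)$.
\item The case $\lambda=0=\zeta_{\min}$ follows by letting $z\uparrow\zeta_{\min}$ in Step~2. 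Use monotone convergence for the log potential and continuity of the affine map $z\mapsto K_\ST(s;z)$.
\end{itemize}
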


In the next section, we extend this to the present non-convex setting.

\subsubsection{The variational bound for the logarithmic potential}

Throughout this sub-section, we fix a measure $\nu \in\cP(\R^{3})$
such that $\inf\supp(\mu_{\star}(\nu))\ge 0$. 
Assume $L_0(\nu) >0$, so that $\ell$ is not convex on the support of $\nu$.
Define for $s\in(0,L_0(\nu)^{-1})$,
\begin{equation}
    G_\ST(s) := G_\ST(s;\nu) := \frac1{\alpha s } - \E_\nu\left[ \frac{\ell''(v;v_0,w)}{1 + s\, \ell''(v;v_0,w)} \right].
\end{equation}

We will extend the definition of $K_\ST$ and $G_\ST$ to $L_0(\nu)$ by continuity,
\begin{equation}
K_\ST(L_{0}(\nu)^{-1}; z, \nu) := \lim_{s\to L_{0}(\nu)^{-1}} K_\ST(s;z,\nu),
\end{equation}
and similarly for $G_\ST$.
 Note that the limit always exist by monotonicity, albeit
 it is potentially infinite.

The following lemma summarizes the relevant properties of $G_\ST$ in the non-convex setting. Specifically, it relates
 the properties of the function $G_\ST$ to the Stieltjes transform $s_{\star}.$ 
\begin{lemma}
\label{lemma:G_to_s_star}
Fix $\nu\in\cuP(\R^{3})$ such that $L_0=L_0(\nu) >0$.
    The function $G_\ST(s)$ has a unique global minimizer $s_{\min} :=s_{\min}(\nu)$ in  
    $(0,L_{0}^{-1}]$ satisfying
    \begin{equation}
        \zeta_{\min}(\nu) := \inf\supp(\mu_\star(\nu)) = - G_\ST(s_{\min}).
    \end{equation}
    Furthermore, for
        $z\in(-\infty, -G_\ST(s_{\min })]$, $s_{\star }(z)$ is finite, and is the unique solution to 
        \begin{equation}
            G_\ST(s) = - z, \quad\quad s \in (0,L_{0}^{-1}],\quad\quad\; G_\ST'(s) \le 0.
        \end{equation}
\end{lemma}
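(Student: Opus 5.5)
We study $G_\ST$ on $(0,L_0^{-1}]$ directly, using the fixed point equation \eqref{eq:fp_eq} for $s_\star(\nu;z)$. The statement has two parts: a shape statement (unique global minimizer) and a characterization of the left edge of the support and of $s_\star$ to the left of it.

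\emph{Step 1: analytic properties of $G_\ST$.} Since $L_0>0$, there are points of $\supp(\nu)$ where $\ell''$ is close to $-L_0$. Then $1+s\ell''\downarrow 0$ as $s\uparrow L_0^{-1}$, so the term $-\E_\nu[\ell''/(1+s\ell'')]$ increases, possibly to $+\infty$. Meanwhile $1/(\alpha s)\to+\infty$ as $s\downarrow 0$.

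\emph{Convexity.} Differentiate twice:
\[
G_\ST''(s)=\frac{2}{\alpha s^3}-2\,\E_\nu\Big[\frac{(\ell'')^3}{(1+s\ell'')^3}\Big].
\]
Strict convexity does not follow termwise, because the positive-$\ell''$ part contributes a negative amount. I would instead use the standard Silverstein--Choi device. Write $x=s\ell''/(1+s\ell'')$ and consider $sG_\ST(s)=\alpha^{-1}-\E[x]$. The map $s\mapsto s\ell''/(1+s\ell'')$ is increasing; it is concave where $\ell''>0$ and convex where $\ell''<0$. Failing a clean convexity argument, I would fall back on showing that $G_\ST'$ has at most one zero, which suffices for uniqueness of the minimizer. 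For this, note that $G_\ST'(s)=-\frac{1}{\alpha s^2}+\E[(\ell'')^2/(1+s\ell'')^2]$. So $G_\ST'(s)\le 0$ iff $\alpha\,\E[x^2]\le 1$, and $s\mapsto \E[x^2]$ is strictly increasing on $(0,L_0^{-1})$, since $|x|$ increases in $s$ for either sign of $\ell''$. Hence $G_\ST$ is strictly decreasing up to the unique crossing point $s_{\min}$ and strictly increasing afterwards. If there is no crossing, we set $s_{\min}=L_0^{-1}$ (the continuous extension). This gives existence and uniqueness of the global minimizer in $(0,L_0^{-1}]$.

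\emph{Step 2: relation to $\mu_\star(\nu)$.} For real $z$ outside the support, $s_\star(z)=\int(\zeta-z)^{-1}\mu_\star(\de\zeta)$ is real. It satisfies \eqref{eq:fp_eq}, which reads $G_\ST(s)=-z$. For $z$ to the left of the support, $s_\star(z)>0$ and $s_\star$ is increasing in $z$, so $z(s)=-G_\ST(s)$ must be increasing along the branch, i.e.\ $G_\ST'\le0$. I would invoke the Silverstein--Choi characterization: $x\in\R$ lies outside $\supp\mu_\star$ iff $x=z(s)$ for some real $s$ with $z'(s)>0$ in the admissible domain (here $1+s\ell''\neq 0$ on $\supp\nu$). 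On $(0,L_0^{-1})$ the admissible decreasing branch of $G_\ST$ is exactly $(0,s_{\min})$. There $-G_\ST$ sweeps out $(-\infty,-G_\ST(s_{\min}))$ bijectively, and this interval is therefore outside the support. To identify the left edge as exactly $-G_\ST(s_{\min})$, I would argue as follows. At the endpoint $s_{\min}$ either $G_\ST'=0$, in which case the inverse function has a square-root singularity so the edge belongs to the support, or $s_{\min}=L_0^{-1}$, in which case continuation beyond $s=L_0^{-1}$ is blocked by the pole from the support of $\nu$. In both cases $s_\star$ cannot be analytically continued past $-G_\ST(s_{\min})$, which forces that point into $\supp\mu_\star$. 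Finiteness of $s_\star$ at the endpoint follows from monotone convergence.

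\emph{Step 3: uniqueness of the solution.} For each $z\le -G_\ST(s_{\min})$, the solution of $G_\ST(s)=-z$ with $G_\ST'\le0$ is unique by the strict monotonicity established in Step 1.

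\emph{Main obstacle.} The delicate step is the endpoint case $s_{\min}=L_0^{-1}$ with $G_\ST'<0$ there. Showing that the edge is exactly $-G_\ST(L_0^{-1})$ requires an argument that no real continuation exists beyond it. The branch with $s>L_0^{-1}$ would need $1+s\ell''<0$ for $\nu$-mass near $\ell''=-L_0$, which makes $z'$ have the wrong sign or be undefined. I would handle this through the Silverstein--Choi criterion, treating separately the cases where the limit $G_\ST(L_0^{-1})$ is finite or infinite.
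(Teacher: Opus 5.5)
Your proposal is correct. Its first half matches the paper: the paper also gets existence and uniqueness of $s_{\min}$ from the fact that $s^2G_\ST'(s)=-\alpha^{-1}+\E_\nu[(s\ell''/(1+s\ell''))^2]$ is negative near $0$ and strictly increasing on $(0,L_0^{-1})$. Your abandoned convexity aside is inaccurate, since $s\mapsto s\ell''/(1+s\ell'')$ is concave for either sign of $\ell''$ and decreasing when $\ell''<0$, but nothing depends on it.

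The routes differ in the spectral part. The paper is self-contained:
\begin{itemize}
\item it defines the branch inverse $\hat s(z)$ of $G_\ST$ on $(0,s_{\min}]$;
\item it identifies $\hat s$ with $s_\star$ through the asymptotics $\hat s(z)=-1/(\alpha z)+o(1/z)$ and uniqueness of the solution of \eqref{eq:fp_eq};
\item it gets $\zeta_{\min}=-G_\ST(s_{\min})$ by matching the real half-lines on which $s_\star$ and $\hat s$ are analytic, via the analytic implicit function theorem;
\item it gets finiteness at the edge by monotone convergence.
\end{itemize}
You instead import the Silverstein--Choi support criterion, which is the route the paper mentions but does not take.

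The forward direction of that criterion is what resolves the obstacle you flag. For $x\notin\supp\mu_\star(\nu)$, $m=s_\star(x)$ satisfies $-1/m\notin\supp(\ell''_{\#}\nu)$ and $G_\ST'(m)<0$. Two consequences follow.
\begin{itemize}
\item Since $s_\star(z)\downarrow 0$ as $z\to-\infty$ and $s_\star$ is continuous and increasing, a connectedness argument gives $s_\star(z)<L_0^{-1}$ on $(-\infty,\zeta_{\min})$. This is what places $s_\star$ on the decreasing branch, so you should state it explicitly rather than only say ``$G_\ST'\le 0$ along the branch''.
\item Suppose $\zeta=-G_\ST(s_{\min})$ were outside the support. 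Continuity gives $s_\star(\zeta)=s_{\min}$. This contradicts $G_\ST'(s_{\min})=0$ in the interior case, and contradicts $-L_0\in\supp(\ell''_{\#}\nu)$ when $s_{\min}=L_0^{-1}$.
\end{itemize}

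This is arguably more complete than the paper's terse ``analytic iff $a<-G_\ST(s_{\min})$'' step. That step, taken alone, does not exclude a continuation of $s_\star$ through values $s>L_0^{-1}$. The price is dependence on an external theorem whose hypotheses you would need to check for a limiting law $\ell''_{\#}\nu$ that charges negative values.

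Finally, no separate ``infinite limit'' case is needed. If $G_\ST(s)\to+\infty$ as $s\uparrow L_0^{-1}$, there is an interior crossing. So $s_{\min}=L_0^{-1}$ only occurs when $\alpha\,\E_\nu[(\ell''/(L_0+\ell''))^2]\le 1$ and $G_\ST(L_0^{-1})$ is finite.
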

 The result can be derived from the classical result of~\cite{silverstein1995analysis}. We include a direct proof in Appendix~\ref{sec:proof_of_lemma_G} for completeness.
 With this result, we can then pass to the following variational formula for the logarithmic potential.

\begin{lemma}
\label{lemma:variational_form_logdet}
Fix $\nu \in\cuP(\R^{3})$ with $L_0=L_{0}(\nu) > 0.$
Work in the setting of Lemma~\ref{lemma:G_to_s_star}
with
\begin{equation}
    s_{\star}(z) \le s_{\min} \le L_{0}^{-1}\quad\quad \textrm{for}\quad z\in (-\infty, \zeta_{\min}].
\end{equation}
Then for any $z\in(-\infty, \zeta_{\min}]:$
\begin{enumerate}
\item
\label{item:lemma_var_item_1}
We have%
\begin{equation}
\label{eq:K_is_log_pot}
      K_\ST(s_\star(z);z)
    =\int \log(\zeta - z) \mu_\star(\de\zeta).
\end{equation}
    \item 
\label{item:lemma_var_item_2}
    $s_\star(z)$ is the unique solution to 
    \begin{equation}
    \partial_s K_\ST(s;z) = 0, \;\; \partial^2_s K_\ST(s;z) \ge 0,\;\; s\in (0,L_{0}^{-1}].
    \end{equation}
    \item  
\label{item:lemma_var_item_3}
We have
    \begin{equation}
        K_\ST(s_\star(z),z) = \min\left\{ K_\ST(s,z) : s\in
         (0,L_0^{-1}]
        ,\; \E_\nu\left[ \left(\frac{s\ell''(v;v_0,w)}{1 + s\ell''(v;v_0,w)}\right)^2\right] \le \frac1{\alpha}\right\}.
    \end{equation}    
\end{enumerate}
\end{lemma}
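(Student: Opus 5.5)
The plan is to work directly with the explicit form of $K_\ST$ and the Stieltjes fixed-point equation, and to use Lemma~\ref{lemma:G_to_s_star} to identify $s_\star(z)$ on the real axis below the spectrum. Differentiating in $s$ gives
\begin{equation*}
\partial_s K_\ST(s;z) = -\alpha z + \alpha\E_\nu\Big[\frac{\ell''}{1+s\ell''}\Big] - \frac1s = \alpha\big(-z - G_\ST(s)\big),
\end{equation*}
\begin{equation*}
\partial_s^2 K_\ST(s;z) = -\alpha G_\ST'(s) = \frac{1}{s^2} - \alpha\E_\nu\Big[\Big(\frac{\ell''}{1+s\ell''}\Big)^2\Big].
\end{equation*}
So stationary points of $K_\ST$ are exactly solutions of $G_\ST(s)=-z$. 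Convexity at such a point is exactly $G_\ST'\le 0$, which is equivalent to $\E_\nu[(s\ell''/(1+s\ell''))^2]\le 1/\alpha$.

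\textbf{Item 2.} This follows at once from Lemma~\ref{lemma:G_to_s_star}, which characterizes $s_\star(z)$ as the unique $s\in(0,L_0^{-1}]$ with $G_\ST(s)=-z$ and $G_\ST'(s)\le 0$.

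\textbf{Item 3.} The constraint set $\{s: \E[(s\ell''/(1+s\ell''))^2]\le 1/\alpha\}$ is the set where $G_\ST'\le 0$. I will argue that $G_\ST$ is decreasing then increasing on $(0,L_0^{-1}]$, using the unique minimizer $s_{\min}$ from Lemma~\ref{lemma:G_to_s_star}. If needed, I will prove this by observing that $s^2 G_\ST'(s)=-1/\alpha+\E[(s\ell''/(1+s\ell''))^2]$. Each map $s\mapsto (s\ell''/(1+s\ell''))^2$ is increasing on this range: $x\mapsto x/(1+x)$ is increasing in $x=s\ell''$, and $|x/(1+x)|$ is increasing in $s$ for both signs of $\ell''$ while $1+s\ell''>0$. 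Hence the constraint set is the interval $(0,s_{\min}]$. On it $\partial_s^2K\ge0$, so $K_\ST$ is convex there and $\partial_sK=\alpha(-z-G_\ST(s))$ is nondecreasing. For $z\le\zeta_{\min}=-G_\ST(s_{\min})$, we have $-z\ge G_\ST(s_{\min})$, so $s_\star(z)\le s_{\min}$ lies in the interval with $\partial_sK(s_\star)=0$. Convexity then makes $s_\star(z)$ the minimizer over the constraint set. The endpoint $s_{\min}=L_0^{-1}$ is handled through the continuous extension.

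\textbf{Item 1.} This is the main step. I will first prove the identity for $z<\zeta_{\min}$ and then take $z\uparrow\zeta_{\min}$ by monotone convergence/continuity on both sides. The right side is continuous in $z$ up to the edge since $\log$ is integrable against a measure with square-root edge behavior; it is at least monotone, which suffices. Both sides tend to $-\infty$-compatible asymptotics as $z\to-\infty$: $\int\log(\zeta-z)\,\mu_\star(\de\zeta)=\log(-z)+o(1)$. I will differentiate both sides in $z$. By the envelope theorem, since $\partial_sK(s_\star(z);z)=0$,
\begin{equation*}
\frac{\de}{\de z}K_\ST(s_\star(z);z)=\partial_zK=-\alpha s_\star(z).
\end{equation*}
Here the smoothness of $s_\star(z)$ follows from the implicit function theorem, using $G_\ST'(s_\star)<0$ strictly for $z<\zeta_{\min}$. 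On the other side,
\begin{equation*}
\frac{\de}{\de z}\int\log(\zeta-z)\,\mu_\star(\de\zeta)=-\int\frac{\mu_\star(\de\zeta)}{\zeta-z}=-s_\star(z).
\end{equation*}
This is the Stieltjes transform evaluated on the real axis left of the support.

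There is a normalization discrepancy (a factor of $\alpha$) that must be reconciled with the convention in Theorem~\ref{thm:general}, where the log potential appears multiplied by $1/\alpha$. I expect the intended $\mu_\star$ to be normalized so that the two derivatives match, i.e.\ the $n\times n$ versus $d\times d$ companion-matrix convention. I will check this against the convex case, Lemma~\ref{lemma:log_pot_convex}, where the same $K_\ST$ is used. Finally, I will fix the constant by comparing asymptotics as $z\to-\infty$. Here $s_\star(z)\sim -1/(\alpha z)$ (or $-1/z$ in the matching normalization), and expanding gives $K_\ST\to\log(-z)+\text{const}$. The constant $-(\log\alpha+1)$ in $K_\ST$ is exactly what cancels, which is the main bookkeeping obstacle.
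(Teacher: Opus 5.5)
Your proposal is correct and follows the paper's proof essentially step for step. The shared steps are the identity $\partial_s K_\ST(s;z)=-\alpha\big(G_\ST(s)+z\big)$, the envelope computation $\frac{\de}{\de z}K_\ST(s_\star(z);z)=-\alpha s_\star(z)$ matched against the derivative of the log potential and pinned down by the $z\to-\infty$ asymptotics, Items 2--3 via $\partial_s^2K_\ST=-\alpha G_\ST'$ together with Lemma~\ref{lemma:G_to_s_star}, and continuity at $z=\zeta_{\min}$. The factor-of-$\alpha$ discrepancy you flag is resolved as you anticipate: in the proof of Lemma~\ref{lemma:properties_G} the paper takes $s_\star(z)=\frac1\alpha\int(\zeta-z)^{-1}\mu_\star(\de\zeta)$ with $\mu_\star$ a probability measure, so the two derivatives coincide and the constant $-(\log\alpha+1)$ cancels in the limit.
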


The proof is once again deferred, and can be found in Appendix~\ref{sec:pf_lemma_variational_form_logdet}.
We now give a proof of Theorem~\ref{thm:fin_dim_var_formula}.
\subsubsection{Proof of Theorem~\ref{thm:fin_dim_var_formula_k=1}}
\noindent
\textit{Proof of Item~\textit{\ref{item:thm_1_item_1}}:}
Fix $\mu\in\cuV_\mupart$, and $\nu\in\cuV_\nupart(\bR(\mu))$, 
and note that $\nu$ must satisfy $\inf \supp (\mu_{\star}(\nu)) \ge -\lambda$.
So if $L_0(\nu) >0$, then Items~\textit{\ref{item:lemma_var_item_1}} and~\textit{\ref{item:lemma_var_item_3}} of Lemma~\ref{lemma:variational_form_logdet} with $z = -\lambda$ imply
\begin{equation}
\label{eq:upper_bound_var_fixed_j}
    \int\log(\zeta + \lambda)  \mu_{\star}(\nu)(\de \zeta) 
    \le  \inf_{s \in\cS_0(\nu)} K_\ST(s, -\lambda;\nu),
\end{equation}
for all $\nu$. Meanwhile, if $L_0(\nu) = 0$,
then Lemma~\ref{lemma:log_pot_convex} gives
 Eq.~\eqref{eq:upper_bound_var_fixed_j} as well (in-fact, it holds with equality if $L_0(\nu) > 0)$. 
So $\tilde \Phi_\nupart$ in Theorem~\ref{thm:general} is lower bounded as
\begin{equation}
    \tilde \Phi_\nupart(\nu,\bR) \ge \sup_{s\in\cS_0(\nu)} \Phi_\nupart(\nu,\bR,s).
\end{equation}

Now clearly $\tilde\Omega_0$ in the statement of Theorem~\ref{thm:general} is a high-probability set. Meanwhile, by Lemma~41 of~\cite{asgari2025local}, there exists $c_0>0$ so that $\tilde\Omega_1(c_0)$ is also a high-probability set, so that we can set the high-probability set in the statement of Item~\textit{\ref{item:thm_1_item_1}} as $\Omega'_0 := \tilde\Omega_0 \cap\tilde\Omega_1(c_0)$ for this choice of $c_0$.
Finally, since $\cuG_\nupart \supseteq \tilde\cuG_\nupart,\cuG_\mupart \supseteq \tilde\cuG_{\mupart},$ an application of Theorem~\ref{thm:general} now yields the desired result. \qed\newline

\noindent
\textit{Proof of Item~\textit{\ref{item:thm_1_item_2}}:}
Fix $\nu\in\cuP(\R^3)$ with $\inf\supp(\mu_\star(\nu)) > -\lambda.$ 
If $L_0(\nu) > 0$,
then by Lemma~\ref{lemma:G_to_s_star}
 $s_{\star}(-\lambda;\nu)$ uniquely solves
\begin{equation}
\label{eq:s_star_exp}
    \E_\nu\left[\frac{s \ell''}{1 + s\ell''}\right] = \frac1\alpha + s\lambda,\quad\quad
    \E_\nu\left[\left(\frac{s \ell''}{1 + s\ell''}\right)^2\right] \le \frac1\alpha
\end{equation}
for $s \in (0,L_{0}^{-1}(\nu)]$.
Meanwhile, if $L_0(\nu) = 0$, then again $s_\star(-\lambda;\nu)$ solves the above uniquely for $s\in(0,\infty)$ by Lemma~37 of~\cite{asgari2025local}.

So the function $\nu \mapsto s_{\star}(-\lambda;\nu)$ is well-defined on such measures $\nu$, and by 
Lemma~\ref{lemma:variational_form_logdet}
and Lemma~37 of~\cite{asgari2025local},
we have
\begin{equation}
\tilde \Phi_\nupart(\nu,\bR) = \Phi_{\nupart}(\nu,\bR,s_\star(-\lambda;\nu)),
\end{equation}
where $\tilde\Phi_\nupart$ was defined in Theorem~\ref{thm:general}.

Now, it is clear that the sets $\cuA_\cR,\cuB_\cL$ are open in the $W_2$ topology by the Pseudo-Lipschitz assumption on the functions involved in their definitions. So as in the proof of 
the first item above, by working on the appropriate high-probability 
sets, we can apply Theorem~\ref{thm:general} to obtain an upper-bound on the left-hand side of Eq.~\eqref{eq:computation_lb} given by
\begin{equation}
     -\inf_{\mu \in\cuV_\mupart\cap\cuA_\cR}
     \inf_{\nu \in\cuV_\nupart(\bR(\mu)) \cap \cuB_\cL} 
    \left\{ 
    \Phi_{\mupart}(\mu) + \Phi_{\nupart}\big(\nu ,\bR(\mu), s_\star(-\lambda;\nu)\big)
    \right\}.
\end{equation}

Now introducing the notation $g(\nu) := \E_\nu[\ell'(v;v_0,w)^2]$,  
we note that we can perform the minimization above as
\begin{equation}
   \inf_{\substack{\bR \in\cR \\
    r_{00} = \int t_0^2  \mu_0 ( \de t_0)
   }} 
   \inf_{L \in\cL} \Phi_0(\bR,L)
\end{equation}
where
\begin{align}
\Phi_0(\bR,L) :=
\inf_{\substack{g > \sfa_L^2 \\
s >0
}}
\inf_{\substack{\mu \in \cuV_\mupart  \\
\bR(\mu) = \bR
}}
\inf_{ \substack{\nu \in \cuV_\nupart(\bR) \\ 
\E_\nu[\psi] = L\\
g(\nu) = g\\
s_{\star}(-\lambda;\nu) = s
}}
\Bigg\{
\Phi_\mupart(\mu)
 &- \lambda s
 + \frac1{2\alpha} \log\det\left(\frac{\alpha\,e\, s^2\, g}{
\schur(\bR,r_{00})
 }\right)\nonumber\\
  &
 - \E_\nu[\log(1 + s \ell''(v;v_0,w)] 
  + \KL(\nu_{\cdot |w} \| \cN(\bzero,\bR))
\Bigg\}
\end{align}
with the infimum over $\nu$ defined to be $+\infty$ whenever
 there is no $\nu$ satisfying the constraints specified by $g$ and $s$. 

Clearly, the minimum over $\mu$ of $\Phi_{\mupart}(\mu)$ is equal 
to zero since for any $\bR \succ\bzero$, we can choose the minimizing 
$\mu^\star$ so that $\mu^\star_{\cdot |t_0} = \cN(r_{10}(\mu)^\sT
 r_{00}^{-1}t_0, \schur(\bR(\mu),r_{00})$ to achieve the zero value
  of the divergence in the definition of $\Phi_{\mupart}(\mu),$ while
 $\mu^\star_{(t_0)}=\mu_0$ (it's easy to check that the resulting 
 $\mu^\star$ will have $\bR(\mu^\star) = \bR$ and will therefore be
  feasible).

What's left now is to minimize terms involving $\nu$.
We first simplify the constraints on $\nu$ appearing above:
First note that for any $\nu$ not satisfying the support constraint $\mu_{\star}((-\infty,-\lambda)) = 0$ appearing in $\cuV_{\nupart}$, Lemma~\ref{lemma:G_to_s_star} implies that there is no $s$ satisfying the conditions in Eq.~\eqref{eq:s_star_exp}, and as a result, we can drop this support constraint (in fact we can always drop this constraint to obtain a lower bound). The minimization over $\nu$ then is equivalent to
\begin{equation}
\label{eq:isolated_min_over_nu}
    \inf_{
    \substack{
    \nu \in \cuV_0\\
    \inf\supp((1 + s\ell'')_{\# \nu}) >0  \\
    } } 
    \bigg\{
 - \E_\nu[\log(1 + s \ell''(v;v_0,w)] +
    \KL(\nu_{\cdot |w } \| \cN(\bzero,\bR))\bigg\},
\end{equation}
where
\begin{align}
\nonumber
    \cuV_{0} := \Big\{\nu \in\cuP(\R^{3}) :
    \nu_{(w)} = \P_w,\;
    \E[(v,v_0)\ell'(v;v_0,w)] &= - \lambda(r_{11},r_{00}),\;
    \E_\nu[\psi] = L,\;
     \E_\nu[\ell'(v;v_0,w)^2] = g,\;\\
    &\E_\nu\Big[\frac{s \ell''}{1 + s\ell''}\Big] = \frac1\alpha,\;
    \E_\nu\bigg[\Big(\frac{s \ell''}{1 + s\ell''}\Big)^2\bigg] \le \frac1\alpha
    \Big\}.
\end{align}

What is left now is to perform this minimization over $\nu$ and introduce the Lagrange parameters for the linear constraints appearing in the definition $\cuV_{0}$. Direct simplifications then give the definition of $\Phi_{\fin}$ in the theorem statement.
\qed

\subsection{Proof of trivialization: Theorem~\ref{thm:trivialization_k=1}}

We now move on to the proof of Theorem~\ref{thm:trivialization_k=1}.
Let us begin with the following lemma stating that the proximal operator in Definition~\ref{def:opt_FP_conds} is well defined.
\begin{lemma}
\label{lemma:prox_regularity}
Under Assumption~\ref{ass:loss},
for all $(x_0,w) \in\R\times \supp(\P_w)$ and $s \in (0, \sfL_{\min}^{-1})$, the map
\begin{equation}
     x\mapsto \Prox_{\ell(\cdot;x_0, w)}( x ; s)
\end{equation}
is a well-defined, invertible, differentiable function for all $x\in\R,$ with derivative
\begin{equation}
\frac{\partial}{\partial x} \Prox_{\ell(\cdot;x_0,w)}(x;s)= 
\frac{1}{1 + s \ell''(\Prox_{\ell(\cdot;x_0,w)}(x;s); x_0, w)}.
\end{equation}
Furthermore, it satisfies the stationarity condition
\begin{equation}
\label{eq:prox_stationarity}
   s \ell'(
   \Prox_{\ell(\cdot;x_0, w)}( x ; s))
    + \Prox_{\ell(\cdot;x_0, w)}( x ; s) = x.
\end{equation}
\end{lemma}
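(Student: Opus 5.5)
Fix $(x_0,w)$ and $s\in(0,\sfL_{\min}^{-1})$, and write $f(u):=\ell(u;x_0,w)$ and $F_x(u):=\frac{1}{2s}(u-x)^2+f(u)$. By Assumption~\ref{ass:loss}, $f'$ is Lipschitz, so $f$ is $C^1$ and $f''$ exists almost everywhere with $f''\ge-\sfL_{\min}$. The proof rests on one observation: the function $u\mapsto u+sf'(u)$ is strictly increasing and bi-Lipschitz. Indeed, for $u>u'$,
\begin{equation*}
(u+sf'(u))-(u'+sf'(u'))=\int_{u'}^{u}\big(1+sf''(r)\big)\,\de r\ \ge\ (1-s\sfL_{\min})(u-u')>0,
\end{equation*}
and the same difference is at most $(1+s\sfL_{\max})(u-u')$.

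\emph{Existence and uniqueness of the minimizer.} Because $1+sf''\ge 1-s\sfL_{\min}>0$, the function $F_x$ is $(1-s\sfL_{\min})/s$-strongly convex: its derivative $F_x'(u)=\big(u+sf'(u)-x\big)/s$ is strictly increasing with the quantitative lower bound above. Hence $F_x$ is coercive and has a unique minimizer $p(x):=\Prox_{\ell(\cdot;x_0,w)}(x;s)$. This minimizer is characterized by $F_x'(p(x))=0$, i.e. $p(x)+sf'(p(x))=x$, which is exactly \eqref{eq:prox_stationarity}.

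\emph{Invertibility and differentiability.} Set $h(u):=u+sf'(u)$. By the bound above, $h$ is a strictly increasing, bi-Lipschitz map of $\R$ onto $\R$; surjectivity follows from $h(u)\to\pm\infty$ as $u\to\pm\infty$. Since $h(p(x))=x$, we get $p=h^{-1}$, so $p$ is a bijection of $\R$ with explicit inverse $h$.

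The remaining point is differentiability. Under the assumption that $\partial_{u_1}^2\ell$ exists and is Lipschitz (again Assumption~\ref{ass:loss}), $h$ is $C^1$ with $h'(u)=1+sf''(u)\ge 1-s\sfL_{\min}>0$. The inverse function theorem then shows that $p$ is $C^1$ with $p'(x)=1/h'(p(x))=1/\big(1+s\ell''(p(x);x_0,w)\big)$, which is the claimed formula.

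The only place any care is needed is this last differentiability step. Suppose $f''$ were only known to exist almost everywhere, rather than being Lipschitz as the assumption provides. In that case I would argue directly from the bi-Lipschitz bounds on difference quotients of $h$: they give bounds on the difference quotients of $p$, and passing to the limit gives the derivative of $p$ at every point where $f''$ is continuous. Under Assumption~\ref{ass:loss}, however, $f''$ is continuous everywhere, so this extra argument is not needed.
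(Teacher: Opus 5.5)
Your proof is correct and follows essentially the same route as the paper: strong convexity of $u\mapsto \ell(u;x_0,w)+\frac{1}{2s}(u-x)^2$ for $s<\sfL_{\min}^{-1}$, the first-order condition giving \eqref{eq:prox_stationarity}, and implicit (equivalently, inverse-function) differentiation for the derivative formula. The only difference is minor: you get invertibility directly from $p=h^{-1}$ with $h(u)=u+s\ell'(u;x_0,w)$, whereas the paper deduces injectivity and surjectivity from the two-sided bounds $P'(x)\in\big[(1+s\sfL_{\max})^{-1},(1-s\sfL_{\min})^{-1}\big]$.
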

The proof is fairly straightforward, and included in Appendix~\ref{proof:prox_regularity} for completeness.

\subsubsection{
Proof of
Theorem~\ref{thm:trivialization_k=1}
}

\begin{proof}[Proof of
Item~\textit{\ref{item:thm_2_item_1}}
of Theorem~\ref{thm:trivialization_k=1}:]

It is not hard to check that Eq.~\eqref{eq:Phi_at_opt_is_0} holds
when $(\mu^\opt, \nu^\opt)$ satisfy Definition~\ref{def:opt_FP_conds}.
Evaluating $\Phi_\star(\mu^\opt,\nu^\opt)$ follows 
an argument  similar to the one in Section 7.1.2 of~\cite{asgari2025local}. 
In order to verify that $(\mu^\opt, \nu^\opt)$ are indeed feasible,
we need to check conditions \eqref{eq:cuV_mupart_def}, \eqref{eq:cuV_mupart_def},
as well as Eqs.~\eqref{eq:S-Set-Def} to \eqref{eq:cuG_nupart_def}.
The only condition of these ones
that is not straightforward to verify is 
$\inf\supp(\mu_{\star}(\nu^\opt)) \ge -\lambda.$ 
In order to check it, note that, by Lemma~\ref{lemma:G_to_s_star},
existence of $s^\opt$ satisfying Eq.~\eqref{eq:Stieltjis-Condition} implies 
that $s^\opt$ must be the Stieltjes transform of
$\mu_{\star}$ at $-\lambda$ and 
 that $\zeta_{\min}(\nu^\opt) \ge -\lambda$.
\end{proof}

For the second item in Theorem \ref{thm:trivialization_k=1}, we state the following lemma which allows us to rewrite $\Phi_\nupart$ into a more tractable form.
Its proof is deferred to Appendix~\ref{proof:prox_density}.
\begin{lemma}
\label{lemma:prox_density}
For $s\in(0,\sfL_{\min}^{-1}),\;\bR\succ\bzero$
consider the non-negative function 
\begin{equation}\label{eq:PiDensity}
       \pi_{\bR,s}(v|v_0,w) := \frac{(1 + \ell''(v;v_0,w))}{\sqrt{2\, \pi\, \schur(\bR,r_{00})}} 
       \exp\left\{ - \frac{\left(s \ell'(v;v_0,w) + v\right)^2}{2\, \schur(\bR,r_{00})}\right\}.
\end{equation}
\begin{enumerate}
    \item For  
    $(v_0,w)\in\R\times\supp(\P_w)$, 
    $\pi_{\bR,s}(v|v_0,w)$ is the conditional density of the random variable 
    \begin{equation}
    v :=(\Prox_{\ell(\cdot,v_0,w)}(g;s))
    \quad
    \textrm{where}\quad  (g,v_0,w) \sim \cN(\bzero_2, \bR) \otimes \P_w.
    \end{equation}
\item  For any $\bR\succ\bzero$, $\nu \in\cuV(\bR)$,
we have
\begin{align}
\label{eq:Phi_B_prox}
    \Phi_{\nupart}(\nu,\bR,s)
    &= 
    \KL(\nu_{\cdot|v_0, w} \| \pi_{\bR,s}(\cdot |v_0,w )) + 
    \KL(\nu_{v_0|w}\|  \cN(0,r_{00}))
    + \frac1{2\alpha} F_\nupart\left(
    \frac{\alpha \, s^2\, \E[\ell'(v;v_0,w)^2]}{\schur(\bR,r_{00})}
    \right) 
\end{align}
where
\begin{equation}
    F_\nupart(x) := \log(x) - x+ 1.
\end{equation}
\end{enumerate}
\end{lemma}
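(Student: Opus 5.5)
We prove the two items in order. Item 1 is a one-dimensional change of variables through the proximal map. Item 2 comes from the chain rule for KL divergence plus an algebraic rearrangement, in which the stationarity constraint defining $\cuV_\nupart(\bR)$ cancels the cross term.

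\emph{Item 1.} Write $S:=\schur(\bR,r_{00})$ and $m(v_0):=r_{10}r_{00}^{-1}v_0$. Conditionally on $(v_0,w)$ with $(g,v_0)\sim\cN(\bzero_2,\bR)$, we have $g\sim\cN(m(v_0),S)$. By Lemma~\ref{lemma:prox_regularity}, for $s\in(0,\sfL_{\min}^{-1})$ the map $g\mapsto \Prox_{\ell(\cdot;v_0,w)}(g;s)$ is a differentiable bijection of $\R$. By the stationarity relation \eqref{eq:prox_stationarity}, its inverse is $v\mapsto v+s\ell'(v;v_0,w)$, with derivative $1+s\ell''(v;v_0,w)>0$. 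The change-of-variables formula then gives the conditional density of $v$ as
\[
(1+s\ell'')\,(2\pi S)^{-1/2}\exp\{-(v+s\ell'-m(v_0))^2/(2S)\}.
\]
This is how I read \eqref{eq:PiDensity}: the Jacobian factor should be $1+s\ell''$, and the Gaussian should be centered at $m(v_0)$. I will state this reading explicitly and use it throughout. Since the inverse map is a bijection, $\pi_{\bR,s}(\cdot|v_0,w)$ is automatically a probability density.

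\emph{Item 2.} First use the chain rule for KL divergence, conditioning on $v_0$ given $w$:
\[
\KL(\nu_{\cdot|w}\|\cN(\bzero,\bR)) = \KL(\nu_{v_0|w}\|\cN(0,r_{00})) + \E_\nu\big[\KL(\nu_{\cdot|v_0,w}\|\cN(m(v_0),S))\big].
\]
Next compare the Gaussian conditional KL with the KL against $\pi_{\bR,s}$. Taking the difference of the log-densities and then the expectation under $\nu$ gives
\[
\KL(\nu_{\cdot|v_0,w}\|\cN(m,S)) = \KL(\nu_{\cdot|v_0,w}\|\pi_{\bR,s}) + \E[\log(1+s\ell'')] - \frac{2s\,\E[\ell'(v-m)] + s^2 g}{2S},
\]
where $g:=\E_\nu[\ell'^2]$ and the expectations are taken over $v_0,w$ as well.

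The key step is evaluating $\E_\nu[\ell'(v-m(v_0))]$ using the constraint in $\cuV_\nupart(\bR)$, namely $\E[(v,v_0)\ell']=-\lambda(r_{11},r_{10})$. This gives
\[
\E_\nu[\ell'(v-m(v_0))] = -\lambda r_{11}+\lambda r_{10}^2/r_{00} = -\lambda S .
\]
The cross term therefore equals $+\lambda s$, which exactly cancels the $-\lambda s$ in $\Phi_\nupart$. The term $\E[\log(1+s\ell'')]$ likewise cancels the $-\E[\log(1+s\ell'')]$ in $\Phi_\nupart$.

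What remains is
\[
\frac1{2\alpha}\log\Big(\frac{\alpha e s^2 g}{S}\Big) - \frac{s^2 g}{2S}.
\]
Setting $x=\alpha s^2 g/S$, so that $s^2g/(2S)=x/(2\alpha)$, this becomes $\frac1{2\alpha}(\log x+1-x)=\frac{1}{2\alpha}F_\nupart(x)$, which is \eqref{eq:Phi_B_prox}.

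\emph{Main obstacle: integrability.} The algebra above requires every expectation to be finite. The finite second moments of $\nu$, together with $\ell'$ being Lipschitz, make the quadratic terms integrable. The term $\log(1+s\ell'')$ is bounded because $1+s\ell''\in[1-s\sfL_{\min},1+s\sfL_{\max}]$ with $1-s\sfL_{\min}>0$. In the case where one side is $+\infty$ (for example, $\nu_{\cdot|v_0,w}$ not absolutely continuous), the identity of log-likelihood ratios shows the other side is $+\infty$ as well. Finally, the difference of log-densities is a quadratic-growth function, so the expectations can be split term by term.
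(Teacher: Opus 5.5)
Your proposal is correct and follows the paper's own argument: a change of variables through the inverse prox map $v\mapsto v+s\ell'$ for Item 1, and for Item 2 the KL chain rule combined with the log-likelihood ratio between $\cN(r_{10}r_{00}^{-1}v_0,\schur(\bR,r_{00}))$ and $\pi_{\bR,s}$, where the stationarity constraint turns the cross term into $+\lambda s$. Your reading of the density, with Jacobian $1+s\ell''$ and the Gaussian centered at $r_{10}r_{00}^{-1}v_0$, is the right one, and your chain-rule identity correctly omits the stray $\E_\nu[\log(1+s\ell'')]$ term that appears in the paper's first display.
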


Next, we simplify the constraint set $\cS_0$.
Recall the definitions of $\alpha_0$ and $\tau_0$ in Eq.~\eqref{eq:alpha_0_hat_s_def}.
The following lemma will allow us to restrict $s$ to a set that is independent of $\nu$.
The proof is given in Appendix~\ref{proof:bars_bound_enough}.
\begin{lemma}
\label{lemma:bars_bound_enough}
For all $\nu\in\cuP(\R^{3})$, if $\alpha \ge\alpha_0$, we have 
\begin{equation}
   \frac{\tau_0}{ \sqrt{\alpha}} < \frac1{\sfL_{\min}}\quad\quad\textrm{and}\quad\quad
   \left(0, \frac{ \tau_0}{\sqrt{\alpha}}  \right) \subseteq \cS_0(\nu).
\end{equation}
\end{lemma}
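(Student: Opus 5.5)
The plan is a direct computation from the definitions of $\alpha_0$ and $\tau_0$ in Eq.~\eqref{eq:alpha_0_hat_s_def} and of $\cS_0(\nu)$ in Eq.~\eqref{eq:S-Set-Def}. The idea is to bound both $s$ and $s\ell''/(1+s\ell'')$ pointwise, using only the uniform bounds $-\sfL_{\min}\le \ell''\le \sfL_{\max}$ from Assumption~\ref{ass:loss}. Throughout I write $\sfL_\vee:=\sfL_{\min}\vee\sfL_{\max}$, so that $|\ell''(v;v_0,w)|\le \sfL_\vee$ everywhere.

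\emph{Step 1: the first inequality.} Assume $\sfL_{\min}>0$; otherwise $\sfL_{\min}^{-1}=\infty$ and the claim is trivial. Since $\alpha\ge\alpha_0=(\sfL_{\min}/\sfL_\vee)^2$, we have $\sqrt{\alpha}\ge \sfL_{\min}/\sfL_\vee$. Hence
\begin{equation*}
\frac{\tau_0}{\sqrt\alpha}\le \frac{1}{2\sfL_\vee}\cdot\frac{\sfL_\vee}{\sfL_{\min}}=\frac{1}{2\sfL_{\min}}<\frac1{\sfL_{\min}}.
\end{equation*}

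\emph{Step 2: the interval constraint in $\cS_0(\nu)$.} By definition $L_0(\nu)\le \sfL_{\min}$, so $L_0(\nu)^{-1}\ge \sfL_{\min}^{-1}$. Step 1 then gives $(0,\tau_0/\sqrt\alpha)\subseteq(0,L_0(\nu)^{-1})$.

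\emph{Step 3: the stability constraint.} Fix $s\in(0,\tau_0/\sqrt\alpha)$. By Step 1, $s\,\sfL_{\min}\le 1/2$, which yields the pointwise bound $1+s\ell''\ge 1-s\sfL_{\min}\ge 1/2$. The numerator satisfies $|s\ell''|\le s\sfL_\vee$. Therefore, pointwise,
\begin{equation*}
\Big|\frac{s\ell''}{1+s\ell''}\Big|\le 2s\sfL_\vee<\frac{2\sfL_\vee\tau_0}{\sqrt\alpha}=\frac{1}{\sqrt\alpha}.
\end{equation*}
Squaring and taking expectation under any $\nu$ gives $\E_\nu[(s\ell''/(1+s\ell''))^2]<1/\alpha$. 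This holds uniformly in $\nu$, so it does not matter that $\cS_0(\nu)$ depends on $\nu$.

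The only delicate point is Step 3. One must secure the denominator lower bound $1/2$ using Step 1, and not merely $s<\sfL_{\min}^{-1}$. The factor $\tfrac12$ in $\tau_0$ is exactly what makes the constant come out to $1/\sqrt\alpha$. I expect no real obstacle beyond this bookkeeping, together with the degenerate case $\sfL_{\min}=0$, which is handled as noted in Step 1.
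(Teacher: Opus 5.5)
Your proof is correct and follows the paper's argument: the same computation $\tau_0/\sqrt{\alpha}\le\tau_0/\sqrt{\alpha_0}=1/(2\sfL_{\min})$, then a pointwise bound $|s\ell''/(1+s\ell'')|\le 2s\sfL_\vee<1/\sqrt{\alpha}$. The paper splits into cases by the sign of $\ell''$ where you use the single bound $1+s\ell''\ge 1/2$, which is only cosmetic, and you state explicitly the inclusion $(0,\sfL_{\min}^{-1})\subseteq(0,L_0(\nu)^{-1})$ (via $L_0(\nu)\le\sfL_{\min}$), which the paper uses implicitly.
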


With these in hand, we are ready to provide a proof for Item~\textit{\ref{item:thm_2_item_2}}.

\begin{proof}[Proof of
Item~\textit{\ref{item:thm_2_item_2}}
of Theorem~\ref{thm:trivialization_k=1}]
Fix $\mu \in\cuV_{\mupart},\nu\in\cuV_{\nupart}(\bR(\mu))$ satisfying $(\mu,\nu)\in\cuT$.
We suppress the dependence on $\mu$ in the notation for $\bR$ and its blocks in what follows.
The previous two lemmas imply
\begin{align}
\label{eq:lower_bound_Phi_triv_proof}
    \sup_{s\in{\cS_0}(\nu)} &\Phi_{\nupart}(\nu,\bR,s) 
    \stackrel{(a)}{\ge}
    \sup_{ s \in (0,\tau_0\,\alpha^{-1/2})} \Phi_{\nupart}(\nu,\bR,s)\\
    &\stackrel{(b)}{=}
    \sup_{s\in (0, \tau_0\, \alpha^{-1/2})}
    \left\{
    \KL(\nu_{\cdot|v_0, w} \| \pi_{\bR,s}(\cdot |v_0,w )) + 
    \KL(\nu_{v_0| w}\| \cN(0,r_{00}))
    + \frac1{2\alpha} 
    F_\nupart\left(
    \frac{\alpha \, s^2\, \E[\ell'(v;v_0,w)^2]}{\schur(\bR,r_{00})}
    \right) 
    \right\},
    \nonumber
\end{align}
where $(a)$ follows by Lemma~\ref{lemma:bars_bound_enough} and $(b)$ follows by
the second item of Lemma~\ref{lemma:prox_density}, whose applicability is justified for all $s \in \tau_0 \alpha^{-1/2}$ by Lemma~\ref{lemma:bars_bound_enough}.

Now note that $(\mu,\nu)\in\cuT$ guarantee that
\begin{equation}
    \hat s(\bR,\nu) := \left(\frac{\schur(\bR,\bR_{00})}{\alpha \E_\nu[\ell'^2]}\right)^{1/2} 
    \le 
    \frac{\tau_0}{\sqrt{\alpha}},
\end{equation}
so that $\hat s(\bR,\nu)$ is a feasible choice of $s$.
Estimating the lower bound of Eq.~\eqref{eq:lower_bound_Phi_triv_proof} by the value of the objective at this point of $s$ gives
\begin{equation}
   \sup_{s\in{\cS_0}(\nu)} \Phi_{\nupart}(\nu,\bR,s) 
   \ge
    \KL(\nu_{\cdot|v_0, w} \| \pi_{\bR,\hat s(\bR,\nu)}(\cdot |v_0,w )) + 
    \KL(\nu_{v_0| w}\| \cN(0,r_{00}))
\end{equation}
since $F_\nupart(1) = 0$.
So by the definition of $\Phi_\star(\mu,\nu)$ given in Theorem~\ref{thm:fin_dim_var_formula_k=1}, we have
\begin{equation}
   \Phi_\star(\mu,\nu) \ge  
  \frac1\alpha 
\KL\left(\mu_{\cdot|t_0}\| \cN(r_{10}(\mu) r_{00}^{-1} t_0,\schur(\bR,r_{00})\right)
+  
    \KL(\nu_{\cdot|v_0, w} \| \pi_{\bR,\hat s(\bR,\nu)}(\cdot |v_0,w )) + 
    \KL(\nu_{v_0| w}\| \cN(0,r_{00}))
\end{equation}
which is strictly positive for all $(\mu,\nu)\in\cuT$ 
with $\mu\in\cuV_{\mupart}, \nu\in\cuV_{\nupart}(\bR(\mu))$, unless 
\begin{equation}
    \mu_{(t_0)} = \mu_0,\quad\textrm{and for all}
    \quad t_0 \in\supp(\mu_0))\quad\textrm{we have}\quad
    \mu_{\cdot|t_0} = 
    \cN(r_{10}^\sT r_{00}^{-1} t_0,\schur(\bR,r_{00}),\label{eq:FirstPosUnless}
\end{equation}
and
\begin{equation}
\label{eq:nu_conditions_for_opt_k=1}
    \nu_{\cdot|v_0,w}(\de v) = \pi_{\bR,\hat s}(v |v_0,w) \de v\, ,
    \quad
    \nu_{\cdot | w}=\cN(0,r_{00}),
    \quad
    \nu_w = \P_w,\quad
    \E_{\nu}[\ell'(v;v_0,w))(v,v_0)] + \lambda (r_{11},r_{10}) = \bzero,
\end{equation}
with
\begin{equation}
\label{eq:hat_s}
    \hat s^2 =  \frac{\schur(\bR,r_{00})}{\alpha \, \E_{\nu}[\ell'(v;v_0,w)^2]}.
\end{equation}

We are left to show that, under Eqs.~\eqref{eq:FirstPosUnless}
to \eqref{eq:hat_s}, $\Phi_{\star}(\mu,\nu)=0$ implies the last condition
in Definition \ref{def:opt_FP_conds}, namely 
Eq.~\eqref{eq:Stieltjis-Condition}. 

By the properties of the proximal operator from Lemma~\ref{lemma:prox_density}, 
and recalling that $(v,w_0,w)$ is distributd as 
$(\Prox_{\ell(\cdot;g_0,w)}(g,\hat s),g_0,w)$ 
(under the ansatz \eqref{eq:nu_conditions_for_opt_k=1})
we have
\begin{align}
    \hat s\E_{\nu}[ \ell'(v;v_0,w)^2] &\stackrel{(a)}{=} 
    \E_0[g\,\ell'(\Prox_{\ell(\cdot;g_0,w)}(g; \hat s)] 
    - \E_{\nu}[v\ell'(v;v_0,w) ]\\
    &\stackrel{(b)}{=}
    \schur(\bR,r_{00}) \E\left[\frac{\ell''(v,v_0,w)}{1 + \hat s \ell''(v;v_0,w)}\right]
    +\E[r_{10}^\sT r_{00}^{-1}v_0\,\ell'(v;v_0,w)] 
    - \E[\ell'(v;v_0,w) v]
    \\
    &\stackrel{(c)}{=}
    \schur(\bR^\opt,r_{00}) \E\left[\frac{\ell''(v;v_0,w)}{1 + \hat s \ell''(v;v_0,w)}\right]
    +\lambda\,\left(-r_{10}^{\sT}r_{00}^{-1}r_{10}
    +  r_{11}\right)
\end{align}
where $(a)$ follows by the stationarity condition in Eq.~\eqref{eq:prox_stationarity}
with $\E_0$ denoting the expectation under $(g,g_0,w)\sim\cN(\bzero,\bR) \otimes \P_w$, 
$(b)$ follows by Gaussian integration by parts and the derivative formula in Lemma~\ref{lemma:prox_density}, and $(c)$ follows by Eq.~\eqref{eq:nu_conditions_for_opt_k=1}.
This 
shows that Eq.~\eqref{eq:hat_s} is equivalent to
\begin{equation}
    \E\left[\frac{\ell''(v;v_0,w)}{1 + \hat s \ell''(v;v_0,w)}\right] + \lambda = \frac1{\alpha\, \hat s}.
\end{equation}
Finally, $\hat s \le \tau_0 \alpha^{-1/2}$ implies---by Lemmas~\ref{lemma:bars_bound_enough}---that $\hat s \in \cS_0(\nu)$.
So we've shown that if $\Phi_\star(\mu,\nu) =0$, then $(\mu,\nu)$ satisfy Definition~\ref{def:opt_FP_conds} as desired.
\end{proof}

\begin{proof}
[Proof of Item~\ref{item:thm_2_item_3} of Theorem~\ref{thm:trivialization_k=1}]

Let $\cuA,\cuB$ be as in the statement and consider 
$\mu\in\cuA \cap \cuV_\mupart$, $\nu \in\cuB \cap \cuV_\nupart(\bR(\mu))$.
Let $\mu$ satisfy
$\schur(\bR(\mu),r_{00}) >  \tau_0  \E_\nu[\ell'(v;v_0,w)^2].$
Suppress the dependence on $\mu$ in the notation for $\bR$.

In what follows, let $(x,x_0) \sim \cN(0,\bR)$.
Let $\nu_{\bR} := \cN(\bzero,\bR) \otimes \P_w$ denote the joint law of $(x,x_0,w)$.
For fixed $w\in\supp(\P_w)$ and any
coupling $\Gamma_w$ of $\cN(\bzero,\bR)$ and $\nu_{\cdot|w}$, 
 we have
\begin{align}
\label{eq:couplings_bound}
&\big\|\E_{\Gamma_w}\left[\ell'(v;v_0,w) (v,v_0) - \ell'(x;x_0,w) (x,x_0) \big|w\right]\big\|_2\\
    &\quad\quad\le
\E_{\Gamma_w}\left[|\ell'(v;v_0,w)|\| (v,v_0) - (x,x_0)\|_2\big| w \right] 
+
    \E_{\Gamma_w}\left[\|(x,x_0)\|_2  |\ell'(v;v_0,w) - \ell'(x;x_0,w)|\big| w \right]\\
     &\quad\quad\le
\big(\E_\nu[\ell'(v;v_0,w)^2|w]^{1/2} + \|\ell'\|_\Lip \Tr(\bR)^{1/2}  \big)\cdot
    \E_{\Gamma_w}\left[\|(v,v_0) - (x,x_0)\|_2^2\big|w\right]^{1/2}.
\end{align}

Now by Talegrand's $T_2$-transportation inequality we have 
\begin{align}
    \E_w\left[\inf_{\Gamma_w}
    \E_{\Gamma_w}\left[\|(v,v_0) - (x,x_0)\|_2^2\big|w\right]\right]
    &\le 
  \E_w\left[  2 \|\bR\|_\op \E_{\nu_{\cdot|w}}\left[\log\left(\frac{\de \nu_{\cdot|w}(v,v_0)}{\varphi_\bR(v,v_0)} \right)\right]  \right]\\
    &=2 \|\bR\|_\op \KL(\nu_{\cdot|w}\| \cN(\bzero,\bR)))
\end{align}
for $\varphi_{\bR}$ the density function for $\cN(0,\bR),$
where expectation over $w$ in the conditional KL-divergence is taken over $\P_w$.
    So defining
    \begin{equation}
 E_{0}(\nu,\bR) :=\big\|
\E_{\nu_{\bR}}[\ell'(x;x_0,w) (x,x_0)]
-
    \E_\nu[\ell'(v;v_0,w) (v,v_0)]   \big\|_2
    \end{equation}
    and
\begin{align}
    A_0(\bR) &:=
      \tau_0^{-1}\, \schur(\bR,r_{00})^{1/2} + \|\ell'\|_{\Lip} \Tr(\bR)^{1/2}\\
      &\ge 
      \E_\nu[\|\ell'(v;v_0,w)\|_2^2]^{1/2} + \|\ell'\|_\Lip \Tr(\bR)^{1/2},
\end{align}
we conclude that
\begin{equation}
\label{eq:KKT_nu_to_KL}
E_{0}(\nu,\bR)
\le  A_0(\bR) 
\,
\Big(
2 \|\bR\|_\op \KL(\nu_{\cdot|w}\| \cN(\bzero,\bR)) \Big)^{1/2}.
\end{equation}

By Theorem~\ref{thm:fin_dim_var_formula_k=1} and Lemma~\ref{lemma:bars_bound_enough}, we have whenever $\alpha \ge \alpha_0$, 
\begin{align}
    \sup_{s\in\cS_{0}(\nu)}&\Phi_{\nupart,\lambda}(\nu,\bR,s)
    \ge 
    \sup_{\tau \in(0,  \tau_0)}\Phi_{\nupart,\lambda}(\nu,\bR,\tau \alpha^{-1})\\
    &\ge 
     -\frac{\tau_0\lambda}{\alpha} 
     - \E\left[\log\left(1 + \frac{\tau_0\ell''(v;v_0,w)}{\alpha}\right)\right]
    +\frac{1}{2\alpha} \log\left(\frac{\tau_0^2 \E[\ell'(v;v_0,w)^2]}{\alpha \, \schur(\bR,r_{00})}\right) 
     + \frac{E_0(\nu,\bR)^2}{2 A_0(\bR)^2  \|\bR\|_\op}\\
     &\ge -\frac{\tau_0\lambda}{\alpha} 
     - \frac{\tau_0}{\alpha}\E_\nu\left[\ell''(v;v_0,w)\right]
    +\frac{1}{2\alpha} \log\left(\frac{\tau_0^2 \E[\ell'(v;v_0,w)^2]}{\alpha \, \schur(\bR,r_{00})}\right) 
     + \frac{E_0(\nu,\bR)^2}{2 A_0(\bR)^2  \|\bR\|_\op}\\
     &\ge -\frac1{\alpha} \rmB_\star(\nu,\bR)  - \frac1\alpha\log(\alpha)
     + \rmG_\star(\bR)
\end{align}
where
\begin{equation}
    \rmB_\star(\nu,\bR) := \tau_0 \big(\E_\nu[\ell''] + \lambda\big)  + \frac12 \log\left(\frac{\schur(\bR,r_{00})}{\tau_0^2 \E_\nu[\ell'^2]}\right)
\end{equation}
and
\begin{equation}
    \rmG_\star(\bR) := 
   \frac{
\big\|\,\E_{(u,u_0)\sim\cN(\bzero,\bR)}\big[(u,u_0)\ell'(u;u_0,w)\big|_{(u,u_0) =(x,x_0)} \big]  
     + \lambda (r_{11},r_{10})
    \big\|_2^2
    }
    {
    2\,\|\bR\|_\op
    \left(\tau_0^{-1}\, \schur(\bR,r_{00})^{1/2} + \|\ell'\|_{\Lip} \Tr(\bR)^{1/2}\right)^2
    },
\end{equation}
where we used 
the condition $\E_{\nu}[\ell'(v,v_0,w)(v,v_0)] = -\lambda (r_{11},r_{10})$ to derive
\begin{equation}
    E_{0}(\nu,\bR) =  \tilde E_0(\bR) := 
\big\|\,\E_{(u,u_0)\sim\cN(\bzero,\bR)}\big[(u,u_0)\ell'(u;u_0,w)\big|_{(u,u_0) =(x,x_0)} \big]  
     + \lambda (r_{11},r_{10})
    \big\|_2^2.
\end{equation}
\end{proof}

\subsection*{Acknowledgments}

This work was supported by the NSF through Award DMS-2031883, the Simons Foundation
through Award 814639 for the Collaboration on the Theoretical Foundations of Deep Learning,
and the NSF Award MFAI-2501597.

\newcommand{\etalchar}[1]{$^{#1}$}
\providecommand{\bysame}{\leavevmode\hbox to3em{\hrulefill}\thinspace}
\providecommand{\MR}{\relax\ifhmode\unskip\space\fi MR }
\providecommand{\MRhref}[2]{%
  \href{http://www.ams.org/mathscinet-getitem?mr=#1}{#2}
}
\providecommand{\href}[2]{#2}

\newpage
\appendix
\section{Deferred proofs}
\subsection{Proof of Lemma~\ref{lemma:G_to_s_star}}
\label{sec:proof_of_lemma_G}

The following lemma outlines some properties of $G_\ST$.
\begin{lemma}[Properties of $G_\ST$]
\label{lemma:properties_G}
Fix $\nu\in\cuP(\R^{3})$ such that
$L_{0}(\nu)= >0$  (recall that
$L_{0}(\nu):= -\Big( 0 \wedge \inf_{(v,v_0,w) \in \supp(\nu)} \ell''(v;v_0,w)\Big)$). 
Then:
\begin{enumerate}
    \item
    \label{item:lemma_G_item_1}
    The function $G_\ST(s)$ has a unique local (hence global) minimizer $s_{\min} :=s_{\min}(\nu)$ in  $(0,L_{0}^{-1}]$.
    \item 
    \label{item:lemma_G_item_2}
    For $z \in (-\infty, -G_\ST(s_{\min })],$ there is a unique solution $\hat s(z) := \hat s(z;\nu)$ to 
    \begin{equation}
        G_\ST(s) = -z,\quad\quad s\in (0,s_{\min}].
    \end{equation}
\item
    \label{item:lemma_G_item_3}
$\hat s(z)$ is analytic on $(-\infty, -G_\ST(s_{\min })),$
and is the unique solution of $G_\ST(s)=  -z$ with  $\hat s(z) = -1/(\alpha z) + o(1/z)$ as $z\to -\infty;$ therefore, it is (up to a scaling) the Stieltjes transform $s_{\star}(z):= s_{\star}(z;\nu)$ of $\mu_{\star}(\nu)$ at $z$.
    
\item 
    \label{item:lemma_G_item_4}
We have \begin{equation}
    \zeta_{\min } := \inf \supp (\mu_{\star}) = - G_\ST(s_{\min }).
\end{equation}
\end{enumerate}
\end{lemma}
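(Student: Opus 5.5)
The plan is to work directly with the explicit form
\[
G_\ST(s)=\frac{1}{\alpha s}-\E_\nu\Big[\frac{\ell''}{1+s\ell''}\Big],\qquad s\in(0,L_0^{-1}),
\]
and read off its shape. Differentiating under the expectation (legitimate because $\ell''$ is bounded and $1+s\ell''>0$ on the support for $s<L_0^{-1}$) gives
\[
G_\ST'(s)=-\frac{1}{\alpha s^2}+\E_\nu\Big[\Big(\frac{\ell''}{1+s\ell''}\Big)^2\Big],
\]
equivalently
\[
s^2G_\ST'(s)=-\frac1\alpha+h(s),\qquad h(s):=\E_\nu\Big[\Big(\frac{s\ell''}{1+s\ell''}\Big)^2\Big].
\]

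For Item~\ref{item:lemma_G_item_1}, the key observation is that $x\mapsto \big(sx/(1+sx)\big)^2$ is increasing in $s$ for every fixed $x$ with $1+sx>0$. Indeed, $sx/(1+sx)$ is increasing in $s$ when $x>0$, and for $x<0$ it is negative and decreasing toward $-\infty$ as $s\uparrow -1/x$. Hence $h$ is nondecreasing, strictly so unless $\ell''=0$ $\nu$-a.s., which is excluded because $L_0>0$. We also have $h(0^+)=0$, so $G_\ST'<0$ near $0$.

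Since $L_0>0$, the support of $\nu$ contains points where $\ell''$ approaches $-L_0$. If $h(s)\uparrow\infty$ as $s\uparrow L_0^{-1}$, then $G_\ST'$ changes sign exactly once. Otherwise, the extension by continuity places the minimizer at $s=L_0^{-1}$, where $G_\ST'\le0$ throughout. In either case $G_\ST$ is strictly decreasing and then (possibly) strictly increasing, which gives a unique minimizer $s_{\min}\in(0,L_0^{-1}]$.

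Item~\ref{item:lemma_G_item_2} then follows by monotonicity. On $(0,s_{\min}]$ the function $G_\ST$ is a strictly decreasing continuous bijection from $(G_\ST(s_{\min}),+\infty)$ (because $G_\ST\sim 1/(\alpha s)\to\infty$ as $s\to0$) onto that range, so each $-z\ge G_\ST(s_{\min})$ has a unique preimage $\hat s(z)$.

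For Item~\ref{item:lemma_G_item_3}, analyticity on the open interval comes from the implicit function theorem, since $G_\ST$ is real-analytic in $s$ and $G_\ST'(\hat s)<0$ there. The asymptotics $\hat s(z)=-1/(\alpha z)+o(1/z)$ follow from $G_\ST(s)=1/(\alpha s)+O(1)$ as $s\to0$.

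To identify $\hat s$ with $s_\star$, I would note that the fixed-point equation \eqref{eq:fp_eq}, rearranged, reads $G_\ST(s)=-z$. Both $\hat s$ and $s_\star$ (the latter extended to real $z$ below the support) solve this equation and share the same $z\to-\infty$ behaviour. Moreover $s_\star$ is real-analytic and increasing on $(-\infty,\zeta_{\min})$. By uniqueness of analytic continuation from a neighborhood of $-\infty$, where the small-$s$ root is unique since $G_\ST$ is monotone near $0$, the two agree on the common interval.

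Item~\ref{item:lemma_G_item_4} is the main obstacle. One inclusion is easy: $\hat s$ extends the Stieltjes transform analytically and with real values up to $z_c=-G_\ST(s_{\min})$, so $\mu_\star$ has no mass below $z_c$, giving $\zeta_{\min}\ge z_c$. For the reverse inequality I would follow Silverstein--Choi~\cite{silverstein1995analysis}: a real point $z$ lies outside the support iff the inverse function $z(s)=-G_\ST(s)$ has $z'(s)>0$ at the corresponding $s$ within the admissible range. At $s_{\min}$ we have either $G_\ST'(s_{\min})=0$, a square-root branch point that is necessarily an edge, or $s_{\min}=L_0^{-1}$. In the latter case the support of $\nu$ comes arbitrarily close to the pole of $\ell''/(1+s\ell'')$, and one shows that the Stieltjes transform cannot be continued past $z_c$. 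The cleanest route is to argue by contradiction: if $\zeta_{\min}>z_c$, then $s_\star$ would be real and increasing slightly beyond $z_c$, hence would exceed $s_{\min}$ while still satisfying $G_\ST(s)=-z$. This is impossible when $G_\ST$ increases past $s_{\min}$, because $s_\star$ is increasing in $z$ while that branch of $G_\ST^{-1}$ is decreasing. It is also impossible when $s_{\min}=L_0^{-1}$, because then $1+s\ell''$ fails to stay positive on the support and the integral representation breaks down.
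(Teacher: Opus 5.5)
Your proposal follows essentially the same route as the paper. Items 1--2 come from strict monotonicity of $s^2G_\ST'(s)=h(s)-1/\alpha$, Item 3 from the implicit function theorem plus the small-$s$ expansion and the rearranged fixed-point equation, and Item 4 from comparing where $\hat s$ and the Stieltjes transform are real-analytic, with the paper likewise relying on Silverstein--Choi for the underlying facts. Two small points to tighten. In Item 1, a bounded $h$ does not force $G_\ST'\le 0$ on all of $(0,L_0^{-1})$: if $\lim_{s\uparrow L_0^{-1}}h(s)>1/\alpha$ the minimizer is interior, though uniqueness still follows from strict monotonicity of $h$. In Item 4 with $s_{\min}=L_0^{-1}$, the sign change of $1+s\ell''$ is not by itself the reason; what works is the Silverstein--Choi property $-1/s_\star(x)\notin\supp(\ell''_{\#}\nu)$ for real $x$ off the support, which together with $-L_0\in\supp(\ell''_{\#}\nu)$ and continuity of $s_\star$ prevents $s_\star$ from crossing $L_0^{-1}$.
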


\begin{proof}
Let $U_0 := U_0(\nu) = \sup_{\supp(\nu)}\ell''.$
For Item~\textit{\ref{item:lemma_G_item_1}},
note that $G_\ST(s) = 1/(\alpha s) + O(1)$ as $s \to 0$
    and 
    \begin{equation}
        L_0^{-2} G_\ST'(L_0^{-1}) = -\frac1{\alpha} + \int_{-L_0}^{U_0} \left( \frac{x}{L_0 + x }\right)^{2} \ell''_{ \#\nu}(\de x).
    \end{equation}
    Either the latter quantity diverges to $\infty$ implying $G_\ST'(s) >0$ near $L_0^{-1}$, or the latter quantity is finite implying that $G_\ST(L_0^{-1})$ is finite as well. In both cases, we conclude the existence of a local minimum in $(0,L_{0}^{-1}].$
    To prove uniqueness, note that $G_\ST'(s)<0$ near $0$ and  
    \begin{equation}
        s^2 G_\ST'(s) = -\frac1{\alpha} + \int \left(\frac{ s\ell''(v_1,v_0,w) }{ 1 + s \ell''(v_1,v_0,w)}\right)^2  \nu (\de (v_1,v_0,w))
    \end{equation}
 is strictly increasing on $(0, L_{0}^{-1}).$
Item~\textit{\ref{item:lemma_G_item_2}} is now clear since $s_{\min}$ is the global minimizer of $G_\ST$ on $(0,L_{\min}^{-1}].$
   
For Item~\textit{\ref{item:lemma_G_item_3}}, that $\hat s(z)$ is analytic is clear, while $\hat s(z) = -1/(\alpha z) + o(1/z)$ follows from $G_\ST(s) = 1/(\alpha s) +  O(1)$ as $s\to 0.$
   Since $G_\ST(s) = -z$ characterizes the Stilejtes transform outside of the support of $\mu_{\star }$, the claim follows.

Finally, for Item~\textit{\ref{item:lemma_G_item_4}}, we note that
by definition of Stieltjis transform 
\begin{align}
s_{\star}(z;\nu) = \frac1\alpha\int \frac1{x - z} \mu_{\star }(\de x).
\end{align}
In particular $s_{\star}(z;\nu)$ is analytic on a neighborhood of 
$\Omega_{a}:= \{z: \Re(z)\le a, \Im(z)=0 \}$, if and only if
$a<\zeta_{\min}:=\inf\supp(\mu_{\star })$. 
On the other hand, $\hat{s}(z)$ is analytic on a neighborhood of
$\Omega_a$ if and only if $a<-G_\ST(s_{\min})$ (by the 
analytic implicit function theorem).
Therefore by the above, we must have $\zeta_{\min } = -G_\ST(s_{\min})$.
\end{proof}

The next lemma asserts that the Stieltjes transform $s_{\star }(z)$ can be extended by continuity to 
$\zeta_{\min }$, the left edge of the support.
\begin{lemma}[Extension]
Let $\nu\in\cuP(\R^{k+k_0+1})$ be such that
$L_{0}(\nu) >0,$ and recall $\zeta_{\min }$ defined in Lemma~\ref{lemma:properties_G}. Then
 the Stieltjes transform and the logarithmic potential can be extended by continuity  to $\zeta_{\min }$
 so that they are finite at $z=\zeta_{\min }$.
 In particular,
\begin{equation}
   s_{\star}(\zeta_{\min }) :=
   \lim_{\delta\to 0^+} s_{\star }(\zeta_{\min }-\delta)    
   = \frac1\alpha\int \frac{1}{\zeta - \zeta_{\min }} \mu_{\star }(\de\zeta)  \le L_0(\nu)^{-1}
\end{equation}
and 
\begin{equation}
    \lim_{\delta \to 0^+}\int |\log(\zeta - \zeta_{\min } + \delta)|
    \mu_{\star }(\de \zeta)
    =
    \int |\log(\zeta - \zeta_{\min } )|
    \mu_{\star }(\de \zeta) < \infty.
\end{equation}
\end{lemma}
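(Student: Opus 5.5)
We need to prove the Extension lemma: the Stieltjes transform and the log potential extend continuously to $\zeta_{\min}$, with $s_\star(\zeta_{\min})\le L_0^{-1}$ and $\int|\log(\zeta-\zeta_{\min})|\,\mu_\star(\de\zeta)<\infty$.

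The plan is to combine the characterization of Lemma~\ref{lemma:properties_G} with monotone convergence. For real $z<\zeta_{\min}$, Item~\ref{item:lemma_G_item_3} of Lemma~\ref{lemma:properties_G} identifies $s_\star(z)$ with $\hat s(z)\in(0,s_{\min}]$. This is the unique solution of $G_\ST(s)=-z$ on the decreasing branch of $G_\ST$.

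\textbf{Step 1: the limit of $s_\star$.} As $z\uparrow\zeta_{\min}=-G_\ST(s_{\min})$, the map $z\mapsto\hat s(z)$ is increasing, since it inverts a decreasing branch. It is also bounded by $s_{\min}\le L_0^{-1}$. Hence the limit exists and is at most $L_0^{-1}$. By continuity of $G_\ST$ on $(0,L_0^{-1}]$ (extended by monotone limits, with $G_\ST(s_{\min})$ finite because it equals $-\zeta_{\min}$), the limit equals $s_{\min}$.

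On the other side, $s_\star(z)=\frac1\alpha\int(\zeta-z)^{-1}\mu_\star(\de\zeta)$. The integrand $(\zeta-\zeta_{\min}+\delta)^{-1}$ is positive and increases as $\delta\downarrow0$ on $\supp(\mu_\star)\subseteq[\zeta_{\min},\infty)$. Monotone convergence therefore gives
\[
\lim_{\delta\downarrow0} s_\star(\zeta_{\min}-\delta)=\frac1\alpha\int\frac{\mu_\star(\de\zeta)}{\zeta-\zeta_{\min}}.
\]
This integral is finite since it equals $s_{\min}\le L_0^{-1}$. As a byproduct, $\mu_\star$ has no atom at $\zeta_{\min}$.

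\textbf{Step 2: the logarithmic potential.} Split $\int|\log(\zeta-\zeta_{\min}+\delta)|\,\mu_\star(\de\zeta)$ into two regions.

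On $\{\zeta-\zeta_{\min}\ge1\}$, we have $|\log(\zeta-\zeta_{\min}+\delta)|\le\log(1+\zeta-\zeta_{\min}+1)$. The support of $\mu_\star$ is compact because $\ell''$ is bounded, so this piece is uniformly bounded, and dominated convergence applies.

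On $\{\zeta-\zeta_{\min}<1\}$, use $|\log x|\le x^{-1}$ for $x\in(0,1]$, after restricting to $\delta$ small. This gives the dominating function $(\zeta-\zeta_{\min})^{-1}$, which is $\mu_\star$-integrable by Step 1. Alternatively, monotone convergence applies directly, since $-\log(\zeta-\zeta_{\min}+\delta)$ increases as $\delta\downarrow0$ where the argument is below $1$.

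Either way, dominated convergence gives convergence of the integral of $|\log|$ to $\int|\log(\zeta-\zeta_{\min})|\,\mu_\star(\de\zeta)<\infty$. The same argument applies to $\int\log(\zeta-\zeta_{\min}+\delta)\,\mu_\star(\de\zeta)$ itself, giving the continuous extension.

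\textbf{Main obstacle.} The delicate point is justifying that $\lim_{z\uparrow\zeta_{\min}}\hat s(z)$ equals the integral expression, rather than merely dominating it. Analyticity off the support, given by Item~\ref{item:lemma_G_item_3}, identifies the two functions for every $z<\zeta_{\min}$. Monotone convergence on the integral side, together with monotone boundedness on the $\hat s$ side, then forces equality of the limits. The compact-support claim used in Step 2 should also be checked: it follows from boundedness of $\ell''$ via the standard bound on the generalized Marchenko--Pastur law.
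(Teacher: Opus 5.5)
Your proposal is correct and follows essentially the paper's argument. You identify $s_\star(z)$ with $\hat s(z)\le s_{\min}\le L_0(\nu)^{-1}$ via Lemma~\ref{lemma:properties_G}, pass to $\zeta_{\min}$ by monotone convergence, and get finiteness of the log potential from the integrability of $(\zeta-\zeta_{\min})^{-1}$ together with boundedness of the support from above. Where the paper is terse, you add details it leaves implicit: the monotonicity of $\hat s$ that justifies taking the limit, and an explicit dominating function for $|\log|$ near $\zeta_{\min}$.
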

\begin{proof}
For any $\delta>0$, we have
by Item~\textit{\ref{item:lemma_G_item_3}} of Lemma~\ref{lemma:properties_G} that 
\begin{equation}
   \hat s(\zeta_{\min } - \delta) =  s_{\star }(\zeta_{\min } - \delta)  = \frac1\alpha\int \frac1{\zeta - \zeta_{\min } + \delta} \mu_{\star }(\de \zeta)\,.
\end{equation}
By Item~\textit{\ref{item:lemma_G_item_1}} of the same lemma, we have 
$L_{0}(\nu)^{-1} \ge \hat s(\zeta_{\min };\nu)$ so that
\begin{equation}
     L_{0}(\nu)^{-1} \ge \hat s(\zeta_{\min };\nu)=
     \lim_{\delta \to 0+} \hat s(\zeta_{\min } - \delta ;\nu) =  \lim_{\delta\to 0+}  
     \frac1\alpha \int \frac1{\zeta - \zeta_{\min} + \delta} \mu_{\star }(\de \zeta) =
      \frac1\alpha\int \frac1{\zeta - \zeta_{\min }} \mu_{\star }(\de \zeta)\, ,
\end{equation}
where the last equality follows by monotone convergence.

Finiteness of the limit of the logarthmic potential now follows as a consequence, since the support of
 $\nu$, and hence that of $\mu_\star(\nu)$, is upper bounded.
\end{proof}

\begin{proof}[Proof of Lemma~\ref{lemma:G_to_s_star}]
   The lemma is a direct corollary of the previous two lemmas. 
\end{proof}

\subsection{Proof of Lemma~\ref{lemma:variational_form_logdet}}
\label{sec:pf_lemma_variational_form_logdet}

First fix $z < \zeta_{\min}(\nu)$.
Since $z$ is chosen outside of the support of $\mu_\star(\nu)$, it is easy to check that we have enough regularity to exchange the derivatives and integrals in what follows; we do so without commenting further.
First, note that
\begin{equation}
\label{eq:derivative_K}
    \frac{\partial}{\partial s} K_\ST(s,z;\nu) =  - \alpha \left( G_\ST(s ;\nu) + z\right).
\end{equation}
So to prove Item~\textit{1}, 
note that 
\begin{equation}
 \frac{\partial}{\partial z} K_\ST(s_\star(z;\nu),z;\nu) =  
 \left.\frac{\partial}{\partial s} K_\ST(s, z;\nu) \right|_{s = s_\star(z;\nu)} 
 \frac{\partial s_{\star}}{\partial z} (z;\nu) + 
 \left.\frac{\partial}{\partial z} K_\ST(s, z;\nu) \right|_{s = s_\star(z;\nu)} =  - \alpha s_\star(z)
\end{equation}
where the second equality follows from~\eqref{eq:derivative_K} since $G_\ST(s_\star(z;\nu);\nu) = -z$ by 
Lemma~\ref{lemma:properties_G}. Recognizing that $\alpha s_\star(z;\nu)$ is the Stieltjes transform by Lemma~\ref{lemma:properties_G}, we deduce
\begin{equation}
  \frac{\partial}{\partial z} K_\ST(s_\star(z;\nu),z;\nu) =  -\int \frac1{\zeta - z}  \mu_\star(\nu)(\de \zeta) = 
  \frac{\partial}{\partial z} \int\log(\zeta-z) \mu_\star(\nu)(\de \zeta).
\end{equation}
Eq.~\eqref{eq:K_is_log_pot} now follows by evaluating matching the behavior of $K_\ST$ at $z\to-\infty$.
The remaining items of the lemma now follow directly after differentiating $K_\ST$ to the second order and relating this derivative to $G_\ST'$ and applying  Lemma~\ref{lemma:properties_G} once again.

For $z = \zeta_{\min}(\nu)$, we argue by continuity after applying Lemma~\ref{lemma:properties_G} for regularity.
 
\qed
\subsection{Proof of Lemma~\ref{lemma:prox_regularity}}
\label{proof:prox_regularity}
Let $\delta \in (0,1)$ be so that $s = \delta \sfL^{-1}_{\min}.$
Fix $(x_0,w)$ throughout, and for brevity we use $P(x)$ to denote the proximal operator, i.e., we have $P(x) = \argmin_{z \in \R} \left\{  L(z,x)\right\}$
where
\begin{equation}
   L(z; x)  :=  \ell(z;x_0,w) + \frac1{2s}(z - x)^2.
\end{equation}
Therefore
\begin{equation}
    L''(z;x) =  \ell''(z;x_0,w) + \frac1{s}
    \ge  \sfL_{\min} \left(\delta^{-1} - 1\right) >0,
\end{equation}
so that $L(z;x)$ is strongly convex, and hence the proximal operator is well-defined, differentiable in $x$, and satisfies the first order stationarity condition
\begin{equation}
    s\ell'(P(x); x_0,w) + P(x)  = x.
\end{equation}
Implicit differentiation then gives the formula for the derivative which implies
\begin{equation}
    P'(x) = \frac{1}{1 + s \ell''(P(x); x_0,w)} \in \left[ 
    \frac{1}{1 + s \sfL_{\max}} , \frac1{1 - s \sfL_{\min }}
    \right] \subseteq (c, (1-\delta)^{-1}),
\end{equation}
for some constant $c>0$. This implies injectivity and surjectivity of $P$ (surjectivity follows since 
$P(M)\ge P(0)+M/(1+s\sfL_{\max})$ and $P(-M)\le P(0)-M/(1-s\sfL_{\min})$ whence, for any $M_0>0$, there exists 
$M>0$ such that $P([-M,M])\supseteq [-M_0,M_0]$).
\qed

\subsection{Proof of Lemma~\ref{lemma:prox_density}}
\label{proof:prox_density}
As in the proof of Lemma~\ref{lemma:prox_regularity}, we use $P(x)$ for the proximal operator of interest. 

The first claim follows immediately by Lemma~\ref{lemma:prox_regularity}; since $s\ell'(P(g)) + P(g) = g$, 
we can use the change of variables formula to see
that the density of $P(g)$ is given by Eq.~\eqref{eq:PiDensity}.

To prove the second claim, note that the chain rule for KL divergences gives, for any $\nu \ll \cN(\bzero,\bR)$,
\begin{align}
\label{eq:KL_exp_1_k=1}
    \KL(\nu_{\cdot|w} \| \cN(\bzero,\bR))
    &= 
    \KL(\nu_{\cdot|v_0, w} \| \cN(r_{10}r_{00}^{-1}v_0,\schur(\bR,r_{00})) 
    +\KL(\nu_{v_0| w}\| \cN(0,r_{00}))
    + \E_\nu[\log( 1 + s\ell'')],
\end{align}
meanwhile, by the first item,
we have for any $\nu \in\cuV(\bR)$, and $s \in(0,\sfL_{\min}^{-1})$
\begin{align}
\label{eq:KL_exp_2_k=1}
    \KL(\nu_{\cdot|v_0, w} \| \cN(r_{10}r_{00}^{-1}v_0,\schur(\bR,r_{00})) 
    &= 
    \KL(\nu_{\cdot|v_0, w} \| \pi_{\bR,s}(\cdot |v_0,w )) + \E_\nu[\log(1+ s\ell'')]
    - \frac12 \frac{s^2 \E_\nu[\ell'^2]}{\schur(\bR,r_{00})}
    +\lambda s,
\end{align}
Using Eq.~\eqref{eq:KL_exp_1_k=1} and Eq.~\eqref{eq:KL_exp_2_k=1} in the formula for $\Phi_{\nupart}$ of Theorem~\ref{thm:fin_dim_var_formula_k=1} gives the claim.

\qed

\subsection{Proof of Lemma~\ref{lemma:bars_bound_enough}}
\label{proof:bars_bound_enough}
The condition on $\alpha$ gives
\begin{equation}
\label{eq:bars_in_good_interval}
    \frac{\tau_0}{\sqrt{\alpha}} 
    \le 
    \frac{\tau_0}{\sqrt{\alpha_0}}  
    = \frac1{2\, \sfL_{\min}} 
    <\frac1{\sfL_{\min}}.
\end{equation}
Hence, for any $s \in (0,\tau_0\,\alpha^{-1/2})$,
\begin{equation}
    \left(\frac{s \ell''}{1 +s\ell''}\right)^2  \le \begin{cases}
         (s \sfL_{\max})^2 & \mbox{ if } \ell'' \ge 0\, ,\\
         \frac{(s \sfL_{\min})^2}{(1 - s \sfL_{\min})^2}  & \mbox{ if } \ell'' < 0\,.
    \end{cases}
\end{equation}
Hence, we have
\begin{equation}
    \left(\frac{s \ell''}{1 +s\ell''}\right)^2  < \begin{cases}
         (2 s \sfL_{\max})^2 & \mbox{ if } \ell'' \ge 0\, ,\\
         (2 s \sfL_{\min})^2 & \mbox{ if } \ell'' < 0\,.
    \end{cases}
\end{equation}
which then gives
\begin{equation}
    \E\left[\left(\frac{s \ell''}{1 +s\ell''}\right)^2\right]
    < \frac{4\,\tau_0^2}{\alpha} \left(\sfL_{\min}  \vee  \sfL_{\max} \right)^2 
    =\frac1{\alpha}.
\end{equation}
This along with Eq.~\eqref{eq:bars_in_good_interval} proves that
\begin{equation}
    \left(0,\frac{\tau_0}{\sqrt{\alpha}}\right) \subseteq \left\{
    s \in (0, \sfL_{\min}^{-1} ) : \E_\nu\left[ \left(\frac{s\ell''(v;v_0,w)}{1 + s \ell''(v;v_0, w)}\right)^2\right] < \frac1\alpha
    \right\} \subseteq \cS_{0}(\nu)
\end{equation}
for all $\nu$.
\qed

\subsection{Proof of Theorem~\ref{cor:robust_regression}}
\noindent{\emph{Proof of Claim 1.}}
Let us 
re-parameterize 
\begin{equation}
    \bbeta := \btheta -\btheta_0, 
\end{equation}
and apply Theorem~\ref{thm:fin_dim_var_formula} 
with $\btheta$ replaced by $\bbeta$ and $\btheta_0$ set to $0.$
That is, $\cG_n$ now becomes
\begin{equation}
    \cG_n(\sfA_R,\sfa_L) =\left\{ 
    \bbeta \in\R^d : \|\bbeta\|_2< \sfA_R,\; 
    \frac1n\sum_{i=1}^n \ell'(\bx_i^\sT\bbeta - w_i)^2  > \sfa_L^2,\;
    \sigma_{\min} \left(\bJ \bg_0(\bX\bbeta,\bw) \right) > e^{-\overline{o}(n)}
    \right\}.
\end{equation}
We conclude that that for some high-probability event $\Omega_1'$,
\begin{equation}
    \lim_{\delta\to 0} \lim_{n\to\infty} \frac1n \log \E[Z_n(\cuA,\cuB;\sfA_R,\sfa_L)\one_{\Omega_\delta\cap\Omega_1'}] \le -\inf_{\mu\in\cuA \cap\cuV_{\mupart} \cap \cuG_{\mupart}(\sfA_R)}  
    \inf_{\nu\in\cuB \cap \cuV_{\nupart}(r(\mu)) \cap \cuG_\nupart(\sfa_L)}
    \Phi_{\star}(\mu,\nu)
\end{equation}
where
\begin{align}
    \Phi_\star(\mu,\nu) :=\sup_{s\in \cS_{0}(\nu)}\bigg\{ \frac1\alpha \KL(\mu\|\cN(0,r(\mu)))
    &-\E[\log(1 + s \ell''(v-w))] + \frac1{2\alpha} \log\left(\frac{\alpha e s^2\E[\ell'(v-w)^2]}{r(\mu)}\right)
    \nonumber \\
    &+ \KL(\nu_{\cdot|w} \| \cN(0,r(\mu)))\bigg\}.
\end{align}

Recalling Item~\ref{item:thm_2_item_3} of Theorem~\ref{thm:trivialization_k=1}, in this setting we have
\begin{equation}
    \rmB_{\star}(\nu,r) \le
\frac{
\E_\nu[\ell''(v;w)] 
}{2 \sfL_\vee} 
+  \frac12 \log\left( \frac{4\, r \sfL_\vee^2 }{\sfa_L^2}\right) 
\le \frac12 \log\left( \frac{4\, e \, r \sfL_\vee^2 }{\sfa_L^2}\right) 
\end{equation}
and 
\begin{equation}
    \rmG_\star(r)  \ge 
\frac{(\E[\ell''(r^{1/2} Z;w)]  )^2}{18  \sfL_{\vee}^2}
\end{equation}
for all $(\mu,\nu) \in\cuT^c$ with $\mu \in\cuV_\mupart \cap \cuG_\mupart(\sfA_R)$ and $\nu\in\cuV_{\nupart}(r(\mu)) \cap \cuG_\nupart(\sfa_L).$
From the statement of
Theorem~\ref{thm:trivialization_k=1}, we conclude that for any such $\mu,\nu$
\begin{equation}
    \Phi_{\star}(\mu,\nu) \ge  
\inf_{\beta \in [\sfa_L/2\sfL_{\vee}, \sfA_R]}\frac{(\E[\ell''(\beta Z;w)]  )^2}{18  \sfL_{\vee}^2} 
- 
\frac1{2\alpha} \log\left( \frac{4\, e \, \beta^2 \sfL_\vee^2 \alpha^2}{\sfa_L^2 }\right)
\end{equation}
where we used that for all $(\mu,\nu) \in\cuT^c,$ we have $\sfA_R \ge r^{1/2}(\mu) \ge \sfa_L/(2\sfL_{\vee}).$

Recall now the assumption on $\alpha_\star >1$ in the theorem statement. We must have $\Phi_{\star}(\mu,\nu) > 0$ for all $\alpha > \alpha_\star$ and $(\mu,\nu) \in\cuT^c$
with
$\mu \in\cuV_\mupart \cap \cuG_\mupart(\sfA_R)$ and $\nu\in\cuV_{\nupart}(r(\mu)) \cap \cuG_\nupart(\sfa_L).$
Meanwhile, for $\alpha >1  \ge  \sfL_{\min}^2/{\sfL_{\vee}^2}$, Item~\textit{\ref{item:thm_2_item_2}} guarantees that for all $(\mu,\nu)\in\cuT$
with
$\mu \in\cuV_\mupart \cap \cuG_\mupart(\sfA_R)$ and $\nu\in\cuV_{\nupart}(r(\mu)) \cap \cuG_\nupart(\sfa_L),$
we have $\Phi_\star(\mu,\nu) \ge 0$ with equality if and only if $\mu,\nu$ satisfy Definition~\ref{def:opt_FP_conds}.

\noindent{\emph{Proof of Claim 2.}} Assume that $\mu^\opt,\nu^\opt$  are unique, and take 
for $\eps >0$,
\begin{equation}
\cuA_\eps := \{\mu : W_2(\mu ,\mu^\opt) > \eps\},\quad\quad
\cuB_\eps := \{\nu : W_2(\nu ,\nu^\opt) > \eps\}.
\end{equation}
Then
\begin{align}
     \P\big(\exists \;\textrm{local min.}\; \hat\btheta\in\cG_n :  W_{2}(\hmu(\hat\btheta),& \mu^\opt) + W_2(\hnu(\hat\btheta), \nu^\opt) > \eps\big)
     \le  \P\left( \Omega_{\delta}^c \right)  + \P(\Omega_1'^c)\\
     &+\P\left( \{|\cZ_n(\cuA_\eps,\cuB_\eps) \cap\cG_n(\sfA_R,\sfa_L)|>0\}\cap \Omega_{\delta}\cap\Omega_1'\right).
\end{align}
Taking the limit $n,d\to\infty, n/d \to \alpha > \alpha_\star$ followed by $\delta \to 0$ and using the first item gives the claim.

\qed

\subsection{Proof of Corollary~\ref{prop:tukey}}
Recall the definition
   \begin{equation}
       \cZ_{\cE,\cR} := \left\{
      \btheta : \grad \hat R_n(\btheta) = \bzero,\; 
      \grad^2 \hat R_n(\btheta) \succeq \bzero,\; 
        \hat R_n(\btheta) \in \cE,\;
        \|\btheta - \btheta_0\|_2 \in \cR
       \right\}
   \end{equation} 
   for open sets $\cE\subseteq (0,\kappa^2/6)$ and $\cR \subseteq (0,\sfA_R)$.
  The corollary follows directly from the Theorem \ref{thm:fin_dim_var_formula_k=1}
and Theorem~\ref{thm:trivialization_k=1}
  once we show the following lemma.
\begin{lemma}
Let $\ell(t) := \ell_\tuk(t;\kappa)$ be the Tukey loss with cutoff $\kappa>0$.
Assume $w_i$ are i.i.d., and that for some $b_0\in(0,\kappa/2), b_1>0$,
\begin{equation}
   \P_w\left(|w| \in (b_0,\kappa - b_0) \right)  > b_1.
\end{equation}
Then, there exists some $\alpha_1 >1, c>0$ such that for all $\alpha >\alpha_1$
\begin{align}
\lim_{\substack{n,d \to\infty\\ n/d\to\alpha}}\P\bigg(
&\frac1n\sum_{i=1}^n \left((\bx_i^\sT \bbeta )\ell''( \bx_i^\sT\bbeta- w_i) + \ell'(\bx_i^\sT\bbeta -w_i)\right)^2 > c,\\
&\hspace{30mm}\frac1n \sum_{i=1}^n \ell'(\bx_i^\sT\bbeta - w_i )^2 > c\quad\forall \btheta \in\cZ_{\cE,\cR} ,\; \bbeta = \btheta -\btheta_0
\bigg)  = 1.\nonumber
\end{align}
\end{lemma}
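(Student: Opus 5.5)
The plan is to prove something stronger: both lower bounds hold \emph{uniformly} over the ball $\{\bbeta:\|\bbeta\|_2\le \sfA_R\}$. The stationarity and local-minimality of $\btheta\in\cZ_{\cE,\cR}$ will not be used; only $\|\bbeta\|_2\le\sfA_R$ matters. The argument has three steps. First, show the corresponding population quantities are bounded away from zero. Second, show the empirical averages concentrate uniformly over the ball once $\alpha$ is large. Third, combine. Throughout I use the explicit Tukey derivatives
\[
\ell'(t)=t(1-t^2/\kappa^2)^2\,\mathbf 1_{|t|\le\kappa},\qquad \ell''(t)=(1-t^2/\kappa^2)(1-5t^2/\kappa^2)\,\mathbf 1_{|t|\le\kappa}.
\]
Both are bounded and Lipschitz, and $\ell''$ is also Lipschitz because $\ell'''$ is bounded a.e. and $\ell''$ is continuous at $\pm\kappa$.

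\emph{Population step.} Set $f_1(t,w):=\ell'(t-w)^2$ and
\[
f_2(t,w):=\mathbf 1_{|w|\le\kappa}\,\min\{(t\,\ell''(t-w)+\ell'(t-w))^2,M\}.
\]
The second summand in the statement is at least the empirical average of $f_2$, and on $|w|\le\kappa$ the factor $t\,\ell''(t-w)$ is nonzero only for $|t|\le 2\kappa$. Hence $f_2$ is bounded and Lipschitz in $t$, uniformly in $w$; the indicator in $w$ does not affect Lipschitzness in $\bbeta$. For $b=\|\bbeta\|_2\in[0,\sfA_R]$ define $F_k(b):=\E f_k(bZ,w)$ with $(Z,w)\sim\cN(0,1)\otimes\P_w$. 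Each $F_k$ is continuous in $b$ by dominated convergence. Each is also strictly positive, by the following argument. On the event $|w|\in(b_0,\kappa-b_0)$, which has probability $>b_1$, there is a set of $Z$ of positive Gaussian measure near $0$ where $|bZ-w|\in(0,\kappa)$. There $\ell'(bZ-w)\neq0$. Moreover $t\mapsto t\ell''(t-w)+\ell'(t-w)=\partial_t\big(t\,\ell'(t-w)\big)$ is a nonzero polynomial on that interval, so it vanishes only on a null set. By compactness of $[0,\sfA_R]$, $\inf_b F_k(b)\ge 2c>0$ for $k=1,2$.

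\emph{Uniform concentration step.} Work on the high-probability event $\tilde\Omega_0=\{\|\bX\|_\op\le 2\sqrt n(1+\alpha^{-1/2})\}$. Let $\cN_\eps$ be an $\eps$-net of the ball, so $|\cN_\eps|\le(3\sfA_R/\eps)^d$. For a fixed $\bbeta$, the summands $f_k(\bx_i^\sT\bbeta,w_i)$ are i.i.d. and bounded, since the $w_i$ are assumed i.i.d. Hoeffding's inequality therefore gives deviation $c/2$ from $F_k(\|\bbeta\|)$ with probability at least $1-2e^{-nc^2/(2B^2)}$. A union bound over the net succeeds provided
\[
d\log(3\sfA_R/\eps)\le nc^2/(4B^2),
\]
i.e.\ for $\alpha$ larger than a constant $\alpha_1(\eps)$. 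Off the net, the Lipschitz property gives
\[
\Big|\frac1n\sum_i f_k(\bx_i^\sT\bbeta,w_i)-f_k(\bx_i^\sT\bbeta',w_i)\Big|\le \frac{L}{\sqrt n}\|\bX(\bbeta-\bbeta')\|_2\le 4L\eps
\]
on $\tilde\Omega_0$. Also $|F_k(\|\bbeta\|)-F_k(\|\bbeta'\|)|\le L\eps$. I choose $\eps$ so that $5L\eps<c/2$, fix $\alpha_1$ accordingly, and conclude that with probability $\to1$ both empirical averages exceed $c$ simultaneously for all $\|\bbeta\|\le\sfA_R$. In particular this covers every $\btheta\in\cZ_{\cE,\cR}$.

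The main obstacle is the uniformity over $\bbeta$ in the proportional regime. Empirical distributions of the projections $\bX\bbeta$ are \emph{not} uniformly close to Gaussian when $n\asymp d$, so I cannot appeal to a Glivenko--Cantelli type statement. The net-plus-Hoeffding argument works only because the required deviation $c/2$ is a fixed constant, so a large enough $\alpha$ absorbs the $d\log(1/\eps)$ entropy. This is exactly why the lemma is stated only for $\alpha>\alpha_1$. A secondary technical point is the unbounded noise $w$. It is handled by the truncation $\mathbf 1_{|w|\le\kappa}\min\{\cdot,M\}$, which only decreases the quantity and leaves the population lower bound intact, since the relevant event $|w|<\kappa$ is contained in the truncation region.
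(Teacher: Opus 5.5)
Your proof is correct, and it takes a different route from the paper's.

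\textbf{The paper's route.} The paper works with the indicator class $\bbeta\mapsto\one\{|\bx_i^\sT\bbeta-w_i|\in(b_0/2,\kappa-b_0/2)\}$. It invokes VC-type uniform convergence to show that the fraction of samples in this good interval is uniformly bounded below over $\|\bbeta\|_2\le\sfA_R$, with error of order $\sqrt{d/n}+\sqrt{\log n/n}$. It then turns this into a lower bound on $\frac1n\sum_i\ell'^2$ using $\inf_{t\in(b_0/2,\kappa-b_0/2)}\ell'(t)^2>0$. The second quantity is handled by a ``similar argument'' with a union of intervals $\cI$.

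\textbf{Your route.} You avoid indicators and work with the (truncated) summands themselves. These are bounded and Lipschitz in $t=\bx_i^\sT\bbeta$, uniformly in $w$. So Hoeffding on an $\eps$-net, plus the operator-norm event $\tilde\Omega_0$ to pass off the net, suffices. A fixed deviation $c/2$ lets large $\alpha$ absorb the entropy $d\log(3\sfA_R/\eps)$.

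\textbf{What each buys.}
\begin{itemize}
\item The VC route needs only a pointwise lower bound of the summand on an interval, with no regularity, and it gives an explicit rate.
\item Your route is more elementary. Its cost is the truncation $\one_{|w|\le\kappa}$, needed to keep $t\,\ell''(t-w)$ uniformly bounded and Lipschitz when the noise is unbounded.
\item A real advantage of your version is the second quantity. The zeros of $t\,\ell''(t-w)+\ell'(t-w)$ depend on $t$ and $w$ separately, not only through $t-w$. The paper's sketch uses a fixed set $\cI$ in the single variable $\bx_i^\sT\bbeta-w_i$, which does not by itself account for this.
\item Your population argument handles that dependence cleanly. For each fixed $w$, the function is a nonzero polynomial in $t$ on $|t-w|<\kappa$, so it vanishes on a null set. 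Continuity in $b$ and compactness of $[0,\sfA_R]$ then give a uniform positive lower bound.
\end{itemize}
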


\begin{remark}
    We will prove this lemma via a  uniform convergence argument. A 
    better threshold $\alpha_1$ can be obtained via more specialized
    technique (e.g. by Gaussian comparison inequalities). We do not pursue such a more 
    precise estimate here.
\end{remark}

\begin{proof}
We first show that 
\begin{equation}
\label{eq:lb_1}
    \frac1n\sum_{i=1}^n \ell'(\bx_i^\sT(\btheta-\btheta_0) - w_i)^2   > c \quad\quad \forall\btheta \in\cZ_{\cE,\cR}
\end{equation}
with high probability.
For $Z\sim\cN(0,1)>0$ let
\begin{equation}
    p(\sigma) := \P\left(  | \sigma Z - w| \in \Big(\frac{b_0}{2}, \kappa - \frac{b_0}{2} \Big)\right),\quad\quad (Z,w)\sim\cN(0,1) \otimes \P_w.
\end{equation}
It is easy to check that the assumption on $\P_w$ guarantee that
    $\inf_{\sigma \in (0,\sfA_R)}  p(\sigma) > c_0$
for some $c_0>0$.
Now define
\begin{equation}
\hat F(\bbeta) := \frac1n \sum_{i=1}^n
\one_{\{ |\bx_i^\sT\bbeta - w_i| \in (b_0/2, \kappa - b_0/2)\}}\, ,
\quad\quad
    F(\bbeta):= \E \hat F(\bbeta) = p(\|\bbeta\|_2).
\end{equation}
Then, by standard uniform convergence and VC dimension bounds (e.g.~\cite{vershynin2018high} Ex.~8.3.12), we have 
\begin{equation}
    \sup_{\|\bbeta\|_2 \le \sfA_R} \left|\hat F(\bbeta) - F(\bbeta)\right|
    \le C_0(\sfA_R) \left(\sqrt{\frac{d}{n}} + \sqrt{\frac{\log n}{n}}\right)
\end{equation}
with probability at least $1-n^{-1}$ 
for some $C_0>0$ depending on $\sfA_R$. So we conclude that for some $\alpha_2 := \alpha_2(\sfA_R) > 1$, we have  for all $\alpha > \alpha_2$,
\begin{equation}
    \inf_{\|\bbeta\|_2 \le \sfA_R}  \hat F(\bbeta) > c_1
\end{equation}
for some $c_1 >0$ with high probability.
To see that this implies the lower bound in Eq.~\eqref{eq:lb_1}, note that 
\begin{equation}
    \inf_{t \in (b_0/2, \kappa-b_0/2) } \ell'(t)^2 > 0
\end{equation}
for all $b_0< \kappa.$

Via a similar argument, we can then find some $\alpha_3 >1$ and $c_4>0$ so that
$$
\frac1n\sum_{i=1}^n \left((\bx_i^\sT \bbeta )\ell''( \bx_i^\sT\bbeta- w_i) + \ell'(\bx_i^\sT\bbeta -w_i)\right)^2 > c_4
$$ with high probability as $n/d\to \alpha > \alpha_3.$ To see this, note that for some constant $c_5>0$, one can check that 
\begin{equation}
|(\bx_i^\sT\bbeta)\ell''(\bx_i^\sT\bbeta - w_i) + \ell'(\bx_i^\sT\bbeta - w_i)| 
\ge  c_4 \one_{\{ |x_i^\sT\bbeta -w_i| \in \cI\}}
\end{equation}
where
\begin{equation}
\cI := [-\kappa + \eps, v_1 -\eps] \cup \bigcup_{i=1}^4 [v_i - \eps, v_i+\eps]  \cup  [v_4 -\eps, \kappa -\eps]
\end{equation}
for some $v_1,v_2,v_3,v_4 \in [-\kappa,\kappa].$
\end{proof}

\section{Additional details for numerical experiments}
\label{sec:numerical_details}
\paragraph{Parameters for gradient descent.}
In all experiments involving gradient descent, we  initialized  at
 $\hat\btheta_{(0)}\sim\Unif(\S^{d-1}(1))$ and ran $T:=100,000$ iterations of a gradient 
 descent scheme with simple backtracking. Namely, at iteration $t\in[T],$ the step-size 
 is chosen as
\begin{equation}
    \hat\eta_\up{t} :=\max_{j\in\{0,1,\dots,5\}} \left\{ \gamma^j : \hat R_n\big(\hat \btheta_\up{t-1} - \gamma^j \grad \hat R_n(\hat\btheta_\up{t-1})\big) < \hat R_n(\hat\btheta_\up{t-1})\right\},
\end{equation}
for $\gamma = 0.1,$  and we set 
\begin{equation}
\hat\btheta_\up{t} := \hat \btheta_\up{t-1} - \hat\eta_\up{t} \grad \hat R_n(\hat\btheta_\up{t-1})\,.
\end{equation}
The algorithm is terminated when the maximum iteration is reached, or when the $\|\grad\hat R_n(\hat \btheta_\up{t})\|_2 \le 10^{-9}.$
In all the results shown, the algorithm terminated before reaching the maximum number of iterations.

\paragraph{Solving the fix-point equations.}
As mentioned in Section~\ref{sec:Numerical}, we iterate the following numerical scheme to solve the fix-point equations corresponding to the optimal solution $\nu^\opt:$
\begin{align}
\rho_{t+1} &=  \left(\alpha s_t^2 \E[\ell'(v_t-w)^2] \right)^{1/2},\quad\quad
s_{t+1} = \left(\alpha \E\left[\frac{\ell''(v_{t}-w)}{1+s_t \ell''(v_{t}-w)}\right] \right)^{-1} \wedge \sfL_{\min}^{-1},\\
v_t&:= \Prox_{\ell(\cdot-w)}(\rho_t G;s_t)\, ,\;\;\;  G\sim\normal(0,1)\,
\end{align}
where the integration is done numerically.
Note that if $(\rho^\star,s^\star)$ is a fixed point for this system, then 
\begin{equation}
\label{eq:numerical_fixed_point}
    \rho^{\star 2} = \alpha s^{\star 2} \E[\ell'(v-w)^2],\quad \quad
s^\star = \left(\alpha \E\left[\frac{\ell''(v-w)}{1+s^\star\ell''(v-w)}\right] \right)^{-1},\quad\quad
s^\star \in (0,\sfL^{-1}_{\min}),
\end{equation}
where expectation is over 
\begin{equation}
    v := \Prox_{\ell(\cdot-w)}(\rho^\star G; s^\star),\quad (G,w)\sim\cN(0,1) \otimes\P_w.
\end{equation}
Using the stationarity condition of the prox, i.e.,
\begin{equation}
s^\star \ell'(v -w) + \Prox_{\ell(\cdot -w )}(\rho^\star G ; s)  = \rho^\star G
\end{equation}
into the first equation in the display of Eq.~\eqref{eq:numerical_fixed_point}, and performing Gaussian integration by parts shows that 
\begin{equation}
    \E[\ell'(v-w) v] = 0.
\end{equation}
So if $s^\star$ additionally verifies the stability condition in Eq.~\eqref{eq:E3}, then such a system is a solution of the equations in Definition~\ref{def:opt_FP_conds}.

\paragraph{Existence of a local-minimizer in $\Ball(\btheta_0,\sfA_R)$ for large $\alpha.$}
\begin{lemma}
   Working under the assumptions of Item~\ref{item:cor_1_item_2} of Corollary~\ref{prop:tukey}, for any $\sfA_R,\kappa >0$, there exists $\tilde\alpha:=\tilde\alpha(\sfA_R,\kappa)$ so that for $\alpha >\tilde\alpha,$
   we have
   \begin{equation}
       \lim_{n,d\to\infty} \P\left(\exists\;\textrm{local min.}\;\hat\btheta\;\;\textrm{with}\;\;\|\hat\btheta - \btheta_0\|_2 < \sfA_R  \right) = 1.
   \end{equation}
\end{lemma}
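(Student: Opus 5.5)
The idea is to run a uniform-convergence (landscape concentration) argument on a ball around $\btheta_0$, in the reparametrization $\bbeta=\btheta-\btheta_0$. Write $\hat R_n(\btheta_0+\bbeta)=\frac1n\sum_i \ell(\bx_i^\sT\bbeta-w_i)$, and let the population risk be $R(\bbeta)=\E[\ell(\|\bbeta\|_2 Z - w)]$, which depends only on $\|\bbeta\|_2$. The assumption of Item~\ref{item:cor_1_item_2} says $\E[\ell''_\tuk(\beta Z+W;\kappa)]>0$ for all $\beta\in(0,\sfA_R]$. With Stein's identity this makes $\beta\mapsto \E\ell(\beta Z-w)$ strictly increasing on $(0,\sfA_R]$, since its derivative is $\beta\,\E[\ell''(\beta Z-w)]>0$. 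So $R$ attains its minimum over the closed ball $\overline{\Ball}^d(\bzero,\sfA_R)$ only at $\bbeta=\bzero$. Moreover, $R$ is bounded away from its minimum on the sphere of radius $\sfA_R$: by compactness and continuity in $\beta$, the gap $\Delta:=R|_{\|\bbeta\|=\sfA_R}-R(\bzero)>0$.

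The goal is therefore a deterministic statement: if $\sup_{\|\bbeta\|_2\le \sfA_R}|\hat R_n(\btheta_0+\bbeta)-R(\bbeta)|<\Delta/2$, then the minimizer of the continuous function $\hat R_n$ over the compact ball $\overline{\Ball}(\btheta_0,\sfA_R)$ exists. It also lies in the open ball, because on the boundary $\hat R_n> R(\bzero)+\Delta/2>\hat R_n(\btheta_0)$. An interior minimizer over the ball is a local minimizer of $\hat R_n$ on $\R^d$, with $\|\hat\btheta-\btheta_0\|_2<\sfA_R$. This gives the claim.

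It remains to prove the uniform deviation bound with high probability when $n/d\to\alpha>\tilde\alpha$. I would proceed in three steps.
\begin{enumerate}
\item Fix an $\epsilon$-net of the ball of size $(3\sfA_R/\epsilon)^d$.
\item At each net point, $\ell$ is bounded by $\kappa^2/6$, so Hoeffding gives deviation at most $t$ with probability $1-2e^{-2nt^2/\kappa^4}$. The noise is only assumed to have converging empirical distribution, so I would condition on $\bw$ and compare $\frac1n\sum_i\E[\ell(\|\bbeta\|Z-w_i)]$ to $R$ using $W_2$ convergence and Lipschitzness of $\ell$, uniformly in $\|\bbeta\|\le\sfA_R$.
\item Control the off-net error using $|\ell(a)-\ell(b)|\le\|\ell'\|_\infty|a-b|$. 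This gives $|\hat R_n(\btheta_0+\bbeta)-\hat R_n(\btheta_0+\bbeta')|\le \|\ell'\|_\infty\|\bX\|_\op\|\bbeta-\bbeta'\|/\sqrt n\cdot n^{-1/2}\sqrt n$. On $\tilde\Omega_0$ this is at most $C(1+\alpha^{-1/2})\epsilon$, and the population side is similarly Lipschitz.
\end{enumerate}
Choose $\epsilon$ so that the Lipschitz errors are below $\Delta/8$, and $t=\Delta/8$. The union bound then succeeds once $d\log(3\sfA_R/\epsilon)<2nt^2/\kappa^4$, i.e.\ for $\alpha>\tilde\alpha:=C\kappa^4\log(3\sfA_R/\epsilon)/\Delta^2$. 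Alternatively, the VC bound quoted in the proof of the preceding lemma could be used.

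The main obstacle I expect is the gap $\Delta$ and the monotonicity step. I need positivity of $\E\ell''$ on all of $(0,\sfA_R]$, which is exactly the hypothesis. I also need uniformity near $\beta=0$, which is handled because only the boundary sphere matters for the gap. A secondary point is that $\Delta$ depends on $\P_w$, so $\tilde\alpha$ depends on it as well as on $(\sfA_R,\kappa)$.
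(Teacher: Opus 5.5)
Your proposal is correct and follows essentially the same route as the paper. Both arguments show, via Gaussian integration by parts, that the population risk increases radially; both establish uniform concentration of $\hat R_n$ over the ball $\|\btheta-\btheta_0\|_2\le\sfA_R$; and both conclude that $\hat R_n>\hat R_n(\btheta_0)$ on the boundary sphere, which forces an interior (hence local) minimizer.

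The differences are only technical. You get the uniform bound from an $\epsilon$-net, Hoeffding, and the bound on $\|\bX\|_{\mathrm{op}}$, where the paper uses symmetrization and contraction. You also use only the boundary gap $F(\sfA_R)-F(0)>0$, with $F(\beta)=\E[\ell(\beta Z-w)]$, instead of the paper's quadratic growth bound $F'(\beta)\ge c_0\beta$. This is slightly more economical, because the hypothesis gives positivity of $\E\ell''$ only on $(0,\sfA_R]$ and does not by itself yield a uniform constant as $\beta\to0$.
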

\begin{proof}
Let $R(\btheta):= \E[\hat R_n(\btheta)] = \E \ell_{\tuk}(\btheta^\sT\bx - y)$ denote the population risk. Define $F(\beta) := \E[\ell(\beta Z - W)]$ for $(Z,W)\sim \cN(0,1)\otimes\P_w.$
Under the assumptions of the corollary, we have by Gaussian integration by parts that $F'(\beta) \ge c_0(\sfA_R) \beta$ for all $\beta \in[0,\sfA_R],$ for some $c_0 >0$,
whence for $\bDelta \in \Ball^d(\bzero,\sfA_R)$, 
\begin{equation}
 R(\btheta_0 + \bDelta) - R(\btheta_0) = F(\|\bDelta\|_2) - F(0) \ge \frac{c_0}{2} \|\bDelta\|_2^2.
\end{equation}

By a standard symmetrization argument, followed by Lipschitz contraction argument then a Lipschitz Gaussian concentration argument, we obtain that on a high-probability event $\tilde\Omega$, we have
\begin{equation}
\sup_{\|\bDelta\|_2 \le \sfA_R } |\hR_n(\btheta_0 + \bDelta) - R(\btheta_0 + \bDelta)|  \le C_0(\sfA_R,\kappa) \frac1{\sqrt{\alpha}}.
\end{equation}
We thereby conclude that on $\tilde \Omega$, we have for $\bDelta$ with $\|\bDelta\|_2  = \sfA_R,$ 
\begin{equation}
    \hat R_n(\btheta_0 + \bDelta) \ge \hat R_n(\btheta_0) + \frac{c_0 \sfA_R^2}{2} - 2 C_0(\sfA_R,\kappa)\frac1{\sqrt{\alpha}} > \hat R_n(\btheta_0) + c_1(\sfA_R,\kappa)
\end{equation}
for some $c_1>0$, where the last inequality holds for $\alpha$ larger than some $\tilde\alpha(\sfA_R,\kappa)$.
Therefore, on the high-probability event $\tilde\Omega,$ there must exist a local minimizer with $\|\hat\btheta - \btheta_0\|_2 < \sfA_R.$

\end{proof}


\begin{thebibliography}{EKBB{\etalchar{+}}13}

\bibitem[AMS25]{asgari2025local}
Kiana Asgari, Andrea Montanari, and Basil Saeed, \emph{Local minima of the empirical risk in high dimension: General theorems and convex examples}, arXiv preprint arXiv:2502.01953 (2025).

\bibitem[BM12]{BayatiMontanariLASSO}
M.~Bayati and A.~Montanari, \emph{{The LASSO risk for gaussian matrices}}, IEEE Trans. on Inform. Theory \textbf{58} (2012), 1997--2017.

\bibitem[Bol14]{bolthausen2014iterative}
Erwin Bolthausen, \emph{An iterative construction of solutions of the tap equations for the sherrington--kirkpatrick model}, Communications in Mathematical Physics \textbf{325} (2014), no.~1, 333--366.

\bibitem[CFM23]{celentano2023local}
Michael Celentano, Zhou Fan, and Song Mei, \emph{Local convexity of the tap free energy and amp convergence for z 2-synchronization}, The Annals of Statistics \textbf{51} (2023), no.~2, 519--546.

\bibitem[CM22]{celentano2022fundamental}
Michael Celentano and Andrea Montanari, \emph{Fundamental barriers to high-dimensional regression with convex penalties}, The Annals of Statistics \textbf{50} (2022), no.~1, 170--196.

\bibitem[dAT78]{de1978stability}
Jairo~RL de~Almeida and David~J Thouless, \emph{Stability of the sherrington-kirkpatrick solution of a spin glass model}, Journal of Physics A: Mathematical and General \textbf{11} (1978), no.~5, 983--990.

\bibitem[DM16]{donoho2016high}
David Donoho and Andrea Montanari, \emph{High dimensional robust m-estimation: Asymptotic variance via approximate message passing}, Probability Theory and Related Fields \textbf{166} (2016), no.~3, 935--969.

\bibitem[DMM09]{DMM09}
David~L. Donoho, Arian Maleki, and Andrea Montanari, \emph{{Message Passing Algorithms for Compressed Sensing}}, Proceedings of the National Academy of Sciences \textbf{106} (2009), 18914--18919.

\bibitem[EKBB{\etalchar{+}}13]{el2013robust}
Noureddine El~Karoui, Derek Bean, Peter~J Bickel, Chinghway Lim, and Bin Yu, \emph{On robust regression with high-dimensional predictors}, Proceedings of the National Academy of Sciences \textbf{110} (2013), no.~36, 14557--14562.

\bibitem[FMM21]{fan2021tap}
Zhou Fan, Song Mei, and Andrea Montanari, \emph{{TAP Free Energy, Spin Glasses and Variational Inference}}, The Annals of Probability \textbf{49} (2021), no.~1, 1--45.

\bibitem[FN12]{fyodorov2012critical}
Yan~V Fyodorov and Celine Nadal, \emph{{Critical behavior of the number of minima of a random landscape at the glass transition point and the Tracy-Widom distribution}}, Physical review letters \textbf{109} (2012), no.~16, 167203.

\bibitem[HMRT22]{hastie2022surprises}
Trevor Hastie, Andrea Montanari, Saharon Rosset, and Ryan~J Tibshirani, \emph{Surprises in high-dimensional ridgeless least squares interpolation}, The Annals of Statistics \textbf{50} (2022), no.~2, 949--986.

\bibitem[Hub73]{huber1973robust}
Peter~J Huber, \emph{{Robust regression: asymptotics, conjectures and Monte Carlo}}, The annals of statistics (1973), 799--821.

\bibitem[LM19]{lelarge2019fundamental}
Marc Lelarge and L{\'e}o Miolane, \emph{Fundamental limits of symmetric low-rank matrix estimation}, Probability Theory and Related Fields \textbf{173} (2019), no.~3, 859--929.

\bibitem[MAB20]{maillard2020landscape}
Antoine Maillard, G{\'e}rard~Ben Arous, and Giulio Biroli, \emph{Landscape complexity for the empirical risk of generalized linear models}, Mathematical and Scientific Machine Learning, PMLR, 2020, pp.~287--327.

\bibitem[Mam89]{mammen1989asymptotics}
Enno Mammen, \emph{Asymptotics with increasing dimension for robust regression with applications to the bootstrap}, The annals of statistics (1989), 382--400.

\bibitem[MBM18]{mei2018landscape}
Song Mei, Yu~Bai, and Andrea Montanari, \emph{The landscape of empirical risk for nonconvex losses}, The Annals of Statistics \textbf{46} (2018), no.~6A, 2747--2774.

\bibitem[MM21]{miolane2021distribution}
L{\'e}o Miolane and Andrea Montanari, \emph{{The distribution of the Lasso}}, The Annals of Statistics \textbf{49} (2021), no.~4, 2313--2335.

\bibitem[MR15]{montanari2015non}
Andrea Montanari and Emile Richard, \emph{Non-negative principal component analysis: Message passing algorithms and sharp asymptotics}, IEEE Transactions on Information Theory \textbf{62} (2015), no.~3, 1458--1484.

\bibitem[SB95]{silverstein1995empirical}
JW~Silverstein and ZD~Bai, \emph{On the empirical distribution of eigenvalues of a class of large dimensional random matrices}, Journal of Multivariate Analysis \textbf{54} (1995), no.~2, 175--192.

\bibitem[SC95]{silverstein1995analysis}
Jack~W Silverstein and Sang-Il Choi, \emph{Analysis of the limiting spectral distribution of large dimensional random matrices}, Journal of Multivariate Analysis \textbf{54} (1995), no.~2, 295--309.

\bibitem[SW22]{schramm2022computational}
Tselil Schramm and Alexander~S Wein, \emph{Computational barriers to estimation from low-degree polynomials}, The Annals of Statistics \textbf{50} (2022), no.~3, 1833--1858.

\bibitem[TAH18]{thrampoulidis2018precise}
Christos Thrampoulidis, Ehsan Abbasi, and Babak Hassibi, \emph{Precise error analysis of regularized $ m $-estimators in high dimensions}, IEEE Transactions on Information Theory \textbf{64} (2018), no.~8, 5592--5628.

\bibitem[TOH14]{thrampoulidis2014gaussian}
Christos Thrampoulidis, Samet Oymak, and Babak Hassibi, \emph{The gaussian min-max theorem in the presence of convexity}, arXiv preprint arXiv:1408.4837 (2014).

\bibitem[VDGK25]{vilucchio2025asymptotics}
Matteo Vilucchio, Yatin Dandi, Cedric Gerbelot, and Florent Krzakala, \emph{Asymptotics of non-convex generalized linear models in high-dimensions: A proof of the replica formula}, arXiv preprint arXiv:2502.20003 (2025).

\bibitem[VdV98]{van1998asymptotic}
Aad~W Van~der Vaart, \emph{Asymptotic statistics}, vol.~3, Cambridge university press, 1998.

\bibitem[Ver18]{vershynin2018high}
Roman Vershynin, \emph{High-dimensional probability: An introduction with applications in data science}, vol.~47, Cambridge University Press, 2018.

\end{thebibliography}
\end{document}